\numberwithin{equation}{section}
\begin{document}

	\newtheorem{thm}{Theorem}[section]
	\newtheorem{prop}[thm]{Proposition}
	\newtheorem{lem}[thm]{Lemma}
	\newtheorem{cor}[thm]{Corollary}
	\newtheorem{rem}[thm]{Remark}
	\newtheorem*{defn}{Definition}

	\newtheorem{definit}[thm]{Definition}
	\newtheorem{setting}{Setting}
	\renewcommand{\thesetting}{\Alph{setting}}
	
	\newcommand{\DD}{\mathbb{D}}
	\newcommand{\NN}{\mathbb{N}}
	\newcommand{\ZZ}{\mathbb{Z}}
	\newcommand{\QQ}{\mathbb{Q}}
	\newcommand{\RR}{\mathbb{R}}
	\newcommand{\CC}{\mathbb{C}}
	\renewcommand{\SS}{\mathbb{S}}

	\renewcommand{\theequation}{\arabic{section}.\arabic{equation}}

	\newcommand{\supp}{\mathop{\mathrm{supp}}}    
	
	\newcommand{\re}{\mathop{\mathrm{Re}}}   
	\newcommand{\im}{\mathop{\mathrm{Im}}}   
	\newcommand{\dist}{\mathop{\mathrm{dist}}}  
	\newcommand{\link}{\mathop{\circ\kern-.35em -}}
	\newcommand{\spn}{\mathop{\mathrm{span}}}   
	\newcommand{\ind}{\mathop{\mathrm{ind}}}   
	\newcommand{\rank}{\mathop{\mathrm{rank}}}   
	\newcommand{\Fix}{\mathop{\mathrm{Fix}}}   
	\newcommand{\codim}{\mathop{\mathrm{codim}}}   
	\newcommand{\conv}{\mathop{\mathrm{conv}}}   
	\newcommand{\epsi}{\mbox{$\varepsilon$}}
	\newcommand{\eps}{\mathchoice{\epsi}{\epsi}
		{\mbox{\scriptsize\epsi}}{\mbox{\tiny\epsi}}}
	\newcommand{\cl}{\overline}
	\newcommand{\pa}{\partial}
	\newcommand{\ve}{\varepsilon}
	\newcommand{\zi}{\zeta}
	\newcommand{\Si}{\Sigma}
	\newcommand{\cA}{{\mathcal A}}
	\newcommand{\cG}{{\mathcal G}}
	\newcommand{\cH}{{\mathcal H}}
	\newcommand{\cI}{{\mathcal I}}
	\newcommand{\cJ}{{\mathcal J}}
	\newcommand{\cK}{{\mathcal K}}
	\newcommand{\cL}{{\mathcal L}}
	\newcommand{\cN}{{\mathcal N}}
	\newcommand{\cR}{{\mathcal R}}
	\newcommand{\cS}{{\mathcal S}}
	\newcommand{\cT}{{\mathcal T}}
	\newcommand{\cU}{{\mathcal U}}
	\newcommand{\OM}{\Omega}
	\newcommand{\B}{\bullet}
	\newcommand{\ol}{\overline}
	\newcommand{\ul}{\underline}
	\newcommand{\vp}{\varphi}
	\newcommand{\AC}{\mathop{\mathrm{AC}}}   
	\newcommand{\Lip}{\mathop{\mathrm{Lip}}}   
	\newcommand{\es}{\mathop{\mathrm{esssup}}}   
	\newcommand{\les}{\mathop{\mathrm{les}}}   
	\newcommand{\nid}{\noindent}
	\newcommand{\pzr}{\phi^0_R}
	\newcommand{\pir}{\phi^\infty_R}
	\newcommand{\psr}{\phi^*_R}
	\newcommand{\pow}{\frac{N}{N-1}}
	\newcommand{\ncl}{\mathop{\mathrm{nc-lim}}}   
	\newcommand{\nvl}{\mathop{\mathrm{nv-lim}}}  
	\newcommand{\la}{\lambda}
	\newcommand{\La}{\Lambda}    
	\newcommand{\de}{\delta}    
	\newcommand{\fhi}{\varphi} 
	\newcommand{\ga}{\gamma}    
	\newcommand{\ka}{\kappa}   
	
	\newcommand{\core}{\heartsuit}
	\newcommand{\diam}{\mathrm{diam}}

	\newcommand{\lan}{\langle}
	\newcommand{\ran}{\rangle}
	\newcommand{\tr}{\mathop{\mathrm{tr}}}
	\newcommand{\diag}{\mathop{\mathrm{diag}}}
	\newcommand{\dv}{\mathop{\mathrm{div}}}
	
	\newcommand{\al}{\alpha}
	\newcommand{\be}{\beta}
	\newcommand{\Om}{\Omega}
	\newcommand{\na}{\nabla}
	
	\newcommand{\cC}{\mathcal{C}}
	\newcommand{\cM}{\mathcal{M}}
	\newcommand{\nr}{\Vert}
	\newcommand{\De}{\Delta}
	\newcommand{\cX}{\mathcal{X}}
	\newcommand{\cP}{\mathcal{P}}
	\newcommand{\om}{\omega}
	\newcommand{\si}{\sigma}
	\newcommand{\te}{\theta}
	\newcommand{\Ga}{\Gamma}
	
	\newcommand{\vV}{\mathbf{v}}
	\newcommand{\lbunu}{\ul{m}}
	\newcommand{\ca}{\tilde{a}}
	\newcommand{\Vve}{\ul{\varepsilon}}

	\title[Optimal quantitative stability for a Serrin-type problem in convex cones]{Optimal quantitative stability for a Serrin-type problem in convex cones}
	
	\author{Filomena Pacella}
	\address{F. Pacella. Dipartimento di Matematica, Sapienza Universit\`{a} di Roma, P.le Aldo Moro 2, 00185 Roma, Italy}
    \email{pacella@mat.uniroma1.it}
	
	\author{Giorgio Poggesi}
	\address{G. Poggesi. Department of Mathematics and Statistics, The University of Western Australia, 35 Stirling Highway, Crawley, Perth, WA 6009, Australia}
	%
	%
	\email{giorgio.poggesi@uwa.edu.au}
	
	\author{Alberto Roncoroni}
	\address{A. Roncoroni. Dipartimento di Matematica, Politecnico di Milano, Piazza Leonardo da Vinci 32, 20133, Milano, Italy}
    \email{alberto.roncoroni@polimi.it}

	
	\begin{abstract}
We consider a Serrin’s type problem in convex cones in the Euclidean space and motivated by recent rigidity results we study the quantitative stability issue for this problem. In particular,  we prove both sharp Lipschitz estimates for an $L^2-$pseudodistance and estimates in terms of the Hausdorff distance.
	\end{abstract}

\keywords{Serrin's overdetermined problem, convex cones, symmetry, rigidity, integral identities, stability, quantitative estimates}
\subjclass{Primary 35N25, 53A10, 35B35; Secondary 35A23}

\maketitle
	
	\section{Introduction}
The present paper deals with the quantitative stability of a rigidity result for a mixed boundary value Serrin-type problem in convex cones. Such a rigidity result was established in \cite{PT} (see also \cite{CR}) for convex cones that are smooth outside the origin. Here our analysis allows non-smooth cones such as cones with singularities possibly different from the origin alone. Similar results for (almost) constant mean curvature hypersurfaces have been recently obtained in \cite{Pog3}.
	
	Given a cone $\Si$ in $\RR^N$, $N\geq 2$, with vertex at the origin,  i.e. 
	$$
	\Sigma=\lbrace tx \, : \, x\in\omega\, , \, t\in(0,\infty)\rbrace\, , 
	$$
	where $\omega$ is an open connected set on the unit sphere $\mathbb{S}^{N-1}$; we consider a bounded domain (i.e., a bounded connected open set) $\Si \cap \Om$  -- where $\Om$ is a smooth bounded domain in $\RR^N$ --
	such that its boundary relative to the cone $\Ga_0:= \Si \cap \pa\Om$ is smooth,  while $\pa \Ga_0$ is a $N-2$-dimensional manifold and $\pa(\Si\cap\Om)\setminus \Ga_0$ is smooth enough outside a singular set $\mathcal{S} \subset \pa \Si$ of finite $\ell$-dimensional upper Minkowski content for some $0 \le \ell \le N-2$. To simplify matters, we also assume
	that $\cH^{N-2}(  \pa^*\Si  \cap  \pa \Ga_0) = \cH^{N-2}(\pa \Ga_0)$, where $\pa^* \Si$ denotes the smooth part of $\pa \Si$.  For further details on the setting, we refer to \cite[Setting A and Remark 2.2]{Pog3}. 
	We also set $\Ga_1 :=  \pa(\Si\cap\Om) \setminus \left( \ol{\Ga}_0 \cup \ol{\cS} \right)$ and denote with $\nu$ the (exterior) unit normal vector field to $\Ga_0 \cup \Ga_1$.
	We consider the following mixed boundary value problem:
	\begin{equation}\label{eq:problem torsion}
		\begin{cases}
			\De u = N \quad & \text{ in } \Si \cap \Om 
			\\
			u= 0 \quad & \text{ on } \Ga_0
			\\
			u_\nu=0 \quad & \text{ on } \Ga_1 .
		\end{cases}
	\end{equation}
	As in \cite{PT}, we assume that the solution $u$ of \eqref{eq:problem torsion} is of class 
\begin{equation}\label{reg_u}
W^{1,\infty}(\Si\cap\Om) \cap W^{ 2,2 }(\Si\cap\Om)\, ;
\end{equation}
such an assumption can be viewed as a gluing condition, and, as proved in \cite[Section 6]{PT} for cones smooth outside of the vertex, it is surely satisfied if $\ol{\Ga}_0$ and $\pa\Si$ intersect orthogonally.

As proved in \cite{Pog3} we have the following fundamental identity for Serrin's problem in $\Si$:
\begin{equation}\label{idwps}
		\int_{ \Si \cap \Om} (-u) \left\{ |\na ^2 u|^2- \frac{ (\De u)^2}{N} \right\} dx + \int_{\Ga_1} u \,  \langle \na^2 u \na u, \nu \rangle \, dS_x =
		\frac{1}{2}\,\int_{\Ga_0} \left( u_\nu^2- R^2\right) (u_\nu- \langle x-z , \nu \rangle )\,dS_x ,
	\end{equation}
	for every $z \in \RR^N$  such that
	\begin{equation}\label{eq:INTRO_inner product z in cone}
		\langle x-z ,\nu \rangle = 0 \quad \text{ for any } x \in \Ga_1 ,
	\end{equation}
where		$$R=\frac{N\,|  \Si\cap\Om |}{|\Ga_0|}.$$ 
If the cone $\Sigma$ is convex, such identity provides an alternative proof of \cite[Theorem 1.1]{PT}\footnote{In \cite{PT} two proofs were provided, the first following the tracks of \cite[Theorem 1]{BNSTARMA} and the second following the tracks of \cite[Theorem 1]{We}. The proof in \cite{Pog3} instead, follows the tracks of
\cite[Theorems I.1, I.2]{PS} and their subsequent development in \cite[Theorem 2.1]{MP2}.}.  Indeed, if the following overdetermined condition is in force:
\begin{equation}\label{overdet_con}
u_\nu=R \quad \text{on $\Gamma_0$}\, ,
\end{equation}
then \eqref{reg_u} is satisfied (this can be deduced from \cite{LS}) and \eqref{idwps} reads as
\begin{equation*}
		\int_{ \Si \cap \Om} (-u) \left\{ |\na ^2 u|^2- \frac{ (\De u)^2}{N} \right\} dx + \int_{\Ga_1} u \,  \langle \na^2 u \na u, \nu \rangle \, dS_x =0 \, .
	\end{equation*}
Moreover, being $u\leq 0$ in $\Sigma\cap\Omega$ (see e.g. \cite[Lemma 4.1]{Pog3}) and using the convexity of the cone one has (see e.g. \cite[Formula (3.9)]{PT})
\begin{equation}\label{convexity_PT}
 \int_{\Ga_1} u \,  \langle \na^2 u \na u, \nu \rangle \, dS_x\geq 0 \, ,
\end{equation}
and hence
\begin{equation*}
		\int_{ \Si \cap \Om} (-u) \left\{ |\na ^2 u|^2- \frac{ (\De u)^2}{N} \right\} dx  \leq 0 \, .
	\end{equation*}
But, on the other hand, from Cauchy-Schwarz inequality we easily have  
\begin{equation*}
		\int_{ \Si \cap \Om} (-u) \left\{ |\na ^2 u|^2- \frac{ (\De u)^2}{N} \right\} dx  \geq 0 \, .
	\end{equation*}
Hence,
\begin{equation*}
		\int_{ \Si \cap \Om} (-u) \left\{ |\na ^2 u|^2- \frac{ (\De u)^2}{N} \right\} dx = 0 \,,
	\end{equation*}
and so, being as $u<0$ in $\Si\cap\Om$ (see e.g. \cite[Lemma 4.2]{Pog3}), -- similarly to \cite{BNSTARMA,PT,PS,We} -- we deduce the following rigidity:
\begin{equation}
\Sigma\cap\Omega=\Sigma\cap B_R(z) \quad \text{ and } \quad u(x)=\frac{\left( \vert x- z\vert^2 -R^2\right)^2}{2}\, .
\end{equation}

	%
Identity \eqref{idwps} provides the starting point of our quantitative analysis. We refer the reader to \cite{ABR,BNST,CMV,Feldman, MP2, MP3, MP6, GO, O} and the survey \cite{CR_survey} for results related to the quantitative stability of the classical Serrin's problem (i.e., the particular case where $\Si=\RR^N$).  Roughly speaking what we want to prove is the following: if \eqref{overdet_con} is ``almost'' satisfied then the domain $\Om$ is ``close'' to the ball in a quantitative way.

We mention that,  the point $z\in\mathbb{R}^N$ can be characterized in terms of the
	linear space generated by the normal vector field $\nu (x)$ for $x \in \Ga_1$. In fact, being $\Si$ a cone with vertex at the origin we have that $\langle x, \nu \rangle = 0$ on $\Ga_1 \subset \pa \Si$, and hence \eqref{eq:INTRO_inner product z in cone} is equivalent to $\langle z, \nu \rangle = 0$ on $\Ga_1$. That is, $z \in \left[ \mathrm{span}\left\lbrace \nu(x)\, : \, x\in \Ga_1\right\rbrace \right]^{\bot}$, where $\left[ \mathrm{span}\left\lbrace \nu(x)\, : \, x\in \Ga_1\right\rbrace \right]^{\bot}$ is the orthogonal complement in $\RR^N$ of the vector subspace $\mathrm{span}\left\lbrace \nu(x)\, : \, x\in \Ga_1\right\rbrace\subseteq \RR^N$.
	In particular, 
	\begin{equation}\label{eq:intro k=N}
		\text{dim}\left( \mathrm{span}\left\lbrace \nu(x)\, : \, x\in \Ga_1\right\rbrace\right) = N
	\end{equation}
	is a sufficient condition that guarantees that $z$ must be the origin.
	Moreover,  condition \eqref{eq:intro k=N} is surely verified if $\Ga_1$ contains at least a transversally nondegenerate point, in the sense of the definition introduced in \cite{PT isoperimetric} (see also \cite[Proposition 2.15]{Pog3}).  
	In particular, this is always the case if $\Si$ is a strictly convex cone (and $\Ga_1 \neq \varnothing$). That a transversally nondegenerate point was sufficient to force $z$ to be the origin was noticed in \cite{PT isoperimetric}. The condition in \eqref{eq:intro k=N}, used here and in \cite{Pog3}, is more general and successfully applies to the study of the stability issue.
	To avoid excessive technicalities, the stability results are presented under the additional assumption that $\Ga_0$ and $\pa \Si$ intersect in a Lipschitz way so that $\Si\cap\Om$ is a Lipschitz domain.

	The tool
	that allows to fix the center $z$ of the approximating ball in $\left[ \mathrm{span}\left\lbrace \nu(x)\, : \, x\in \Ga_1\right\rbrace \right]^{\bot}$ (that is the origin whenever \eqref{eq:intro k=N} is in force) is the following weighted Poincar\'{e}-type inequality established in \cite{Pog4}:
	\begin{equation}\label{eq:INTRO new Poicare caso particolare p=2}
		\nr \vV \nr_{L^p (\Si \cap \Om)} \le \eta_{p,\al}( \Ga_1 , \Si\cap\Om)^{-1}  \, \nr \de_{\Ga_0}^\al  \na \vV \nr_{L^p (\Si \cap \Om)} , 
	\end{equation}
	which holds true for any $0 \le \al \le 1$ and $\vV : \Si\cap\Om \to \mathrm{span}\left\lbrace \nu(x)\, : \, x\in \Ga_1\right\rbrace \subseteq \RR^N$ such that $\vV \in W^{1,p}_\al (\Si\cap\Om)$ and $\langle \vV , \nu \rangle = 0$ a.e. in $\Ga_1$.
	
	Notice that, if we consider the function
	\begin{equation}\label{eq:INTRO def h}
		h := q - u , \quad \text{ where } q \text{ is the quadratic function defined as } q(x)=\frac12\, |x-z|^2 ,
	\end{equation}
	the choice $z = 0$ always guarantees that $\langle \na h , \nu \rangle = 0$ on $\Ga_1$, by the homogeneous Neumann condition $u_\nu=0$ on $\Ga_1$ and $\langle x,\nu \rangle =0$ on $\Ga_1\subset\pa\Si$; therefore, if $\mathrm{span}\left\lbrace \nu(x)\, : \, x\in \Ga_1\right\rbrace = \RR^N$ the new Poincar\'e-type inequality can be applied with $\vV:=\na h$.
	Such a weighted Poincar\'e-type inequality will allow us to deal with the weighted integral
	$$
	\int_{ \Si \cap \Om} (-u) \left\{ |\na ^2 u|^2- \frac{ (\De u)^2}{N} \right\} dx \, ,
	$$
	which appears in \eqref{idwps}. In the classical case $\Si=\RR^N$, different approaches to deal with such a weighted integral can be found in \cite{BNST, Feldman, MP2, MP3, MP6}.

	The quantitative stability results provided in the present paper include, as particular case\footnote{More precise general statements will be provided later on in this Introduction.} and when \eqref{eq:intro k=N} is in force, the following Lipschitz stability estimate for the $L^2$-pseudodistance of $\Si\cap\Om$ to $\Si \cap B_R (0)$:
	\begin{equation}\label{eq:intro k=N Serrin}
		\nr |x| - R \nr_{L^2(\Ga_0)} \le C \, \nr u_\nu^2 -R^2 \nr_{L^2(\Ga_0)} \, . 
	\end{equation}
The closeness in $L^2$-pseudodistance obtained here is stronger than the closeness in terms of the so called asymmetry in measure. In fact, clearly the $L^2$-pseudodistance is stronger than the $L^1$- pseudodistance, being as
$$
\nr |x| - R \nr_{L^1(\Ga_0)} \le |\Ga_0|^{1/2} \nr |x| - R \nr_{L^2(\Ga_0)} ,
$$
by H\"older's inequality.
In turn, \cite[Proposition 6.1]{CGPRS} informs us that the $L^1$-pseudodistance is stronger than the asymmetry in measure, that is
$$
| (\Si\cap\Om) \De (\Si\cap B_R (0))| \lesssim \nr |x| - R \nr_{L^1(\Ga_0)} .
$$

The constants in our quantitative estimates can be explicitly computed and estimated in terms of a few chosen geometrical parameters. At first, we obtain \eqref{eq:intro k=N Serrin} for an explicit constant $C$ only depending on $\eta_{2,1}(\Ga_1, \Si\cap\Om)$ and a lower bound $\ul{m}$ for $u_\nu$ on $\Ga_0$.

In \cite{Pog3}, new notions of uniform interior and exterior sphere conditions relative to the cone $\Si$ were introduced. These return the classical known uniform sphere conditions in the case $\Si =\RR^N$; when $\Si \subsetneq \RR^N$ they are related to how $\ol{\Ga}_0$ and $\pa\Si$ intersect (see \cite[Sections 4.1 and 4.2]{Pog3}), and both of them are always satisfied if $\ol{\Ga}_0$ and $\ol{\Ga}_1$ intersect orthogonally. As in the classical case $\Si=\RR^N$, these conditions revealed to be useful tools to perform barrier arguments in the mixed boundary value setting for $\Si \subset \RR^N$ to obtain uniform lower and upper bound for the gradient.
In particular,  if $\Sigma$ is convex, $\ul{r}_i$-uniform interior sphere condition relative to $\Si$ guarantees the validity of Hopf-type estimates: in fact, \cite[Lemma 4.4]{Pog3} gives that $u_\nu \ge \ul{r}_i$ on $\Ga_0$ so that we can take $\lbunu:=\ul{r}_i$. Hence, whenever $\Si\cap\Om$ satisfies the $\ul{r}_i$-uniform interior sphere condition relative to $\Si$, we obtain \eqref{eq:intro k=N Serrin} with an explicit $C=C(\eta_{2,1}(\Ga_1, \Si\cap\Om), \ul{r}_i)$.

Similarly to \cite{Pog3}, our method is robust enough to give a complete characterization of the stability issue even in absence of the assumption \eqref{eq:intro k=N}. In fact, in general we can set
\begin{equation}\label{def_k}
k:=\text{dim}\left( \mathrm{span}\left\lbrace \nu(x)\, : \, x\in \Ga_1\right\rbrace\right) ,
\end{equation}
which may be any integer $0 \le k \le N$, and obtain closeness of $\Si\cap \Om$ to $\Si\cap B_R (z)$ for some suitable point $z$ whose components in the $k$ directions spanned by $\mathrm{span}\left\lbrace \nu(x)\, : \, x\in \Ga_1\right\rbrace$ are set to be $0$.
General statements containing Lipschitz stability estimates for the $L^2$-pseudodistance are presented in what follows.

Notice that the case $\Ga_1 = \emptyset $ is included in our treatment (in that case, we have $k=0$).

Up to changing orthogonal coordinates, we can assume that $\mathrm{span}\left\lbrace \nu(x)\, : \, x\in \Ga_1\right\rbrace$ is the space generated by the first $k$ axes $\mathbf{e}_1, \dots, \mathbf{e}_k$.
Notice that, in this way, if we set $z \in \RR^N$ of the form
\begin{equation}\label{eq:INTRO_choice of z 1of2}
	z=(0, \dots , 0 , z_{k+1}, \dots, z_N) \in \RR^N ,
\end{equation}
it surely satisfies \eqref{eq:INTRO_inner product z in cone}.
We also fix
\begin{equation}\label{eq:INTRO_choice of z 2of2}
	z_i = \frac{1}{|\Si\cap\Om|} \int_{\Si\cap\Om} ( x_i - u_i(x) ) \, dx \quad \text{for } i=k+1, \dots, N ,
\end{equation}
where $u_i$ denotes the $i$-th partial derivative of $u$ and $x_i$ the $i$-th component of the vector $x \in \RR^N$.
With this choice of $z$, if we consider the harmonic function $h$ defined in \eqref{eq:INTRO def h},
we have that
\begin{equation}\label{eq:gradienteproiettatosuspan nu}
	(h_1, \dots, h_k , 0 , \dots, 0) \in \mathrm{span}\left\lbrace \nu(x)\, : \, x\in \Ga_1\right\rbrace \subseteq \RR^N ,
	\quad \quad
	\langle (h_1, \dots, h_k , 0 , \dots, 0), \nu \rangle = \langle \na h , \nu \rangle  = 0 \, \text{ on } \Ga_1 
\end{equation}
and
\begin{equation}\label{eq:gradienteparte ortogonale a span nu}
	\int_{\Si \cap \Om} h_i \, dx = 0 \quad \text{for } i = k+1, \dots, N .
\end{equation}
The identity $\langle \na h , \nu \rangle = 0$ on $\Ga_1$ easily follows by \eqref{eq:INTRO_inner product z in cone} and the
%
%
Neumann condition $u_\nu = 0$ on $\Ga_1$.

This will allow to use the Poincar\'e inequality \eqref{eq:INTRO new Poicare caso particolare p=2} with $\vV := (h_1, \dots, h_k , 0 , \dots, 0)$ and the (classical) weighted Poincar\'e inequality for functions with zero mean
\begin{multline}\label{Poincare_intro}
	\nr v \nr_{L^p(\Si\cap\Om)} \le \mu_{p,\al} (\Si\cap\Om)^{-1} \nr \de_{\pa (\Si\cap\Om)}^\al \,  \na v  \nr_{L^p(\Si\cap\Om)}, 
	\,
	\text{ for } 0\le\al\le1 , \, \,  \\ v \in L^p (G) \cap W^{1,p}_{loc} (G)
	\text{ with }
	v_{\Si\cap\Om}=0,
\end{multline}
with $v:=h_i$ for $i=k+1, \dots, N$.

Setting
\begin{equation}\label{def:La p al k}
	\La_{p, \al}(k) :=
	\begin{cases}
		\mu_{p, \al}(\Si\cap\Om)^{-1} \quad & \text{if } k=0
		\\
		\eta_{p, \al } (\Ga_1 , \Si \cap \Om )^{-1} \quad & \text{if } k=N
		\\
		\max\left[ \mu_{p, \al}(\Si\cap\Om)^{-1} , \, \eta_{p, \al} ( \Ga_1 ,  \Si \cap \Om )^{-1}  \right] \quad & \text{if } 1 \le k \le N-1 ,
	\end{cases}
\end{equation}
where $\mu_{p,\al}(\Si\cap\Om)$ and $\eta_{p, \al} ( \Ga_1 , \Si \cap \Om )$ are the constants in \eqref{Poincare_intro} and \eqref{eq:INTRO new Poicare caso particolare p=2} (see also Lemma \ref{lem:BoasStraube} and Theorem \ref{thm:Poincare new in general} below).  We are now ready to present the sharp stability result for the $L^2$-pseudodistance.
	
	\begin{thm}[Lipschitz stability for Serrin's problem in terms of an $L^2$-psudodistance]
		\label{thm:INTRO_Serrinstab Lipschitz}
		Let $\Sigma$ be a convex cone and let $\Si\cap\Om$ be as
		described above. Let $u$ be a solution of \eqref{eq:problem torsion} satisfying \eqref{reg_u} and such that $u_\nu\geq \lbunu$ on $\Ga_0$, for some $\lbunu > 0$. Given the point $z$ defined in \eqref{eq:INTRO_choice of z 1of2} and \eqref{eq:INTRO_choice of z 2of2}, we have that
		\begin{equation}\label{eq:INTRO_Serrin}
			\nr |x-z| - R \nr_{L^2(\Ga_0)} \le C \, \nr u_\nu^2 -R^2 \nr_{L^2(\Ga_0)} ,
		\end{equation}
		where the positive constant $C$ can be explicitly estimated as follows
		\begin{equation*}
			C \le \frac{1}{2 \, \lbunu} \left( 2 N \, \La_{2,1}(k)^2  + 3 \right) .
		\end{equation*}
		%
		%
Whenever $\Si\cap\Om$ satisfies the $\ul{r}_i$-uniform interior sphere condition relative to $\Si$, we can take $\lbunu:= \ul{r}_i$.
	\end{thm}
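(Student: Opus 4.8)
The plan is to follow the Magnanini–Poggesi scheme based on the identity \eqref{idwps}, adapted to the mixed boundary conditions and to the cone. Put $h:=q-u$. Since $\De u=N$ one has $\na^2h=\mathrm{Id}-\na^2u$, hence $|\na^2u|^2-\tfrac{(\De u)^2}{N}=|\na^2h|^2$, and $h$ is harmonic. Because $\Si$ is convex, $u\le0$ in $\Si\cap\Om$ and the $\Ga_1$–term in \eqref{idwps} is nonnegative by \eqref{convexity_PT}; so \eqref{idwps} gives at once
\[
0\le\int_{\Si\cap\Om}(-u)\,|\na^2h|^2\,dx\le\frac12\int_{\Ga_0}(u_\nu^2-R^2)\,(u_\nu-\langle x-z,\nu\rangle)\,dS_x .
\]
On the other hand, by \eqref{eq:gradienteproiettatosuspan nu}–\eqref{eq:gradienteparte ortogonale a span nu} the field $(h_1,\dots,h_k,0,\dots,0)$ is admissible in \eqref{eq:INTRO new Poicare caso particolare p=2} while each $h_i$ with $i>k$ is admissible in \eqref{Poincare_intro}; applying both with $\al=1$, summing over the components and using $\de_{\pa(\Si\cap\Om)}\le\de_{\Ga_0}$, one gets
\[
\nr\na h\nr_{L^2(\Si\cap\Om)}^2\le\La_{2,1}(k)^2\,\nr\de_{\Ga_0}\na^2h\nr_{L^2(\Si\cap\Om)}^2 .
\]

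To bridge the two displays I would establish the Hopf–type bound $-u\ge\tfrac12\de_{\Ga_0}^2$ in $\Si\cap\Om$ by a comparison argument: for $x\in\Si\cap\Om$ let $r:=\de_{\Ga_0}(x)$; a short segment argument using convexity of $\Si$ shows $B_r(x)\cap\Si\subseteq\Si\cap\Om$, and on $\Omega':=B_r(x)\cap\Si$ one compares $-u$ — which solves $\De(-u)=-N$, is $\ge0$ on the spherical part of $\pa\Omega'$ and has zero normal derivative on the conical part $B_r(x)\cap\pa\Si$ (the Neumann condition on $\Ga_1$, the singular set $\cS$ being negligible) — with the paraboloid $y\mapsto\tfrac12(r^2-|y-x|^2)$; their difference is harmonic, vanishes on the Dirichlet part and, since $\langle x,\nu\rangle\le0$ on $\pa\Si$ (convex cone with vertex at the origin), has nonnegative outer normal derivative on the Neumann part, hence is $\ge0$ by the Hopf lemma, so $-u(x)\ge\tfrac12 r^2$. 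Plugging this into the second display and then into the first, $\nr\na h\nr_{L^2(\Si\cap\Om)}^2\le2\,\La_{2,1}(k)^2\int_{\Si\cap\Om}(-u)|\na^2h|^2dx\le\La_{2,1}(k)^2\int_{\Ga_0}(u_\nu^2-R^2)(u_\nu-\langle x-z,\nu\rangle)dS_x$, and Cauchy–Schwarz on $\Ga_0$ bounds the right-hand side by $\La_{2,1}(k)^2\,\nr u_\nu^2-R^2\nr_{L^2(\Ga_0)}\,\nr u_\nu-\langle x-z,\nu\rangle\nr_{L^2(\Ga_0)}$.

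It then remains to read off the pseudodistance. On $\Ga_0$ one has $u=0$, hence $h=q$, $|x-z|^2-R^2=2h-R^2$ and $u_\nu-\langle x-z,\nu\rangle=-h_\nu$; moreover $R=\tfrac1{|\Ga_0|}\int_{\Ga_0}u_\nu\,dS_x\ge\lbunu$ (the equality being the divergence theorem for $\De u=N$ together with $u_\nu=0$ on $\Ga_1$), so on $\Ga_0$ one has the elementary inequalities $|\,|x-z|-R\,|\le\tfrac1R|\,|x-z|^2-R^2\,|$ and $|u_\nu-R|=\tfrac1{u_\nu+R}|u_\nu^2-R^2|\le\tfrac1{2\lbunu}|u_\nu^2-R^2|$. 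Using the divergence theorem with the multipliers $q$ and $-u$ (which produces, for every constant $c$, identities such as $\int_{\Ga_0}(h-c)^2u_\nu\,dS_x=N\int_{\Si\cap\Om}(h-c)^2dx+2\int_{\Si\cap\Om}(h-c)\langle\na h,\na u\rangle dx$ and $\int_{\Si\cap\Om}\langle\na h,\na u\rangle dx=0$) together with the normalizations $\int_{\Ga_0}\langle x-z,\nu\rangle dS_x=\int_{\Ga_0}u_\nu\,dS_x=N|\Si\cap\Om|$, one controls $\nr h-\tfrac{R^2}{2}\nr_{L^2(\Ga_0)}$ — equivalently, up to the factor $\tfrac2R$, the quantity $X:=\nr\,|x-z|-R\,\nr_{L^2(\Ga_0)}$ — and $\nr h_\nu\nr_{L^2(\Ga_0)}=\nr u_\nu-\langle x-z,\nu\rangle\nr_{L^2(\Ga_0)}$ in terms of $\nr\na h\nr_{L^2(\Si\cap\Om)}$ plus a multiple of $\nr u_\nu^2-R^2\nr_{L^2(\Ga_0)}$. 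Inserting the bound on $\nr\na h\nr^2$ obtained above, this collapses to an inequality of the type $X^2\le A\,\nr u_\nu^2-R^2\nr_{L^2(\Ga_0)}\,X+B\,\nr u_\nu^2-R^2\nr_{L^2(\Ga_0)}^2$, which upon solving for $X$ yields \eqref{eq:INTRO_Serrin} with the stated bound $C\le\tfrac1{2\lbunu}(2N\La_{2,1}(k)^2+3)$; the last sentence of the statement is then \cite[Lemma 4.4]{Pog3}.

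The step I expect to be the main obstacle is precisely this last one: transferring the interior estimate for $\nr\na h\nr_{L^2(\Si\cap\Om)}$ to the boundary pseudodistance $X$ with a constant depending only on $\lbunu$ and $\La_{2,1}(k)$ — that is, with no non‑explicit trace constant entering. This forces one to choose the integral identities tailored to $h$ carefully, to keep track of every contribution coming from $\Ga_1$ (and to check that the singular set $\cS$ is negligible, by its finite $\ell$–Minkowski content), and to carry out the final absorption while tracking constants. Everything else — identity \eqref{idwps}, the two Poincaré inequalities, the comparison bound for $-u$, and the two pointwise inequalities on $\Ga_0$ — is comparatively routine.
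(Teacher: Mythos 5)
Your setup is correct and matches the paper up to a point: you use identity \eqref{idwps}, the Cauchy--Schwarz--deficit observation $|\na^2 h|^2 = |\na^2 u|^2 - (\De u)^2/N$, the two Poincar\'e inequalities for $(h_1,\dots,h_k,0,\dots,0)$ and for $h_i$ ($i>k$) combined into $\nr\na h\nr_{L^2(\Si\cap\Om)}\le \La_{2,1}(k)\nr\de_{\Ga_0}\na^2 h\nr_{L^2(\Si\cap\Om)}$, and the Hopf-type bound $-u\ge\tfrac12\de_{\Ga_0}^2$. Your derivation of that Hopf bound by comparison with a paraboloid in $B_r(x)\cap\Si$ is essentially a re-proof of \cite[Lemma 4.2]{Pog3}, which the paper simply quotes as \eqref{eq:non serve ma serve inproof reldist finer}. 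So far so good. The elementary bound $|u_\nu-R|\le\tfrac1{2\lbunu}|u_\nu^2-R^2|$ is also exactly as in the paper.

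The gap is precisely where you flag it: ``transferring the interior estimate to the boundary pseudodistance.'' You leave this step as a hope (``this collapses to an inequality of the type $X^2\le A\cdot X+B$''), and you route it through $\nr h-\tfrac{R^2}{2}\nr_{L^2(\Ga_0)}$ together with second-moment identities. The paper does something different and cleaner. First, it never passes through $h-R^2/2$: it observes that on $\Ga_0$ one has $u=0$, so $\na u$ is normal and $|\na u|=u_\nu$ there, whence the reverse triangle inequality gives pointwise
$\big||x-z|-R\big|\le\big||x-z|-|\na u|\big|+|u_\nu-R|\le|\na h|+|u_\nu-R|$ on $\Ga_0$, and so
$\nr|x-z|-R\nr_{L^2(\Ga_0)}\le\nr\na h\nr_{L^2(\Ga_0)}+\nr u_\nu-R\nr_{L^2(\Ga_0)}$.
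This reduces the problem to bounding the \emph{boundary} norm $\nr\na h\nr_{L^2(\Ga_0)}$, not the interior one. Second, to do so the paper invokes an ad hoc Rellich--Pohozaev identity (from \cite[(5.13)]{Pog3}) of the form
$\int_{\Ga_0}|\na h|^2\,u_\nu\,dS_x = 2\int_{\Ga_1}u\,\langle\na^2u\,\na u,\nu\rangle\,dS_x + N\int_{\Si\cap\Om}|\na h|^2\,dx + 2\int_{\Si\cap\Om}(-u)|\na^2 h|^2\,dx$.
The weight $u_\nu$ on the left is the whole point: bounding it below by $\lbunu$ converts this into a trace inequality for $\nr\na h\nr_{L^2(\Ga_0)}$ with a fully explicit constant, and the $\Ga_1$-term (non-negative by convexity of $\Si$) is harmless because it reappears on the left side of \eqref{idwps} and is absorbed there. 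Feeding in the Poincar\'e bound and \eqref{idwps}, and then one application of H\"older followed by absorption of $\nr h_\nu\nr_{L^2(\Ga_0)}\le\nr\na h\nr_{L^2(\Ga_0)}$, yields $\nr\na h\nr_{L^2(\Ga_0)}\le\tfrac{1}{\lbunu}(N\La_{2,1}(k)^2+1)\nr u_\nu^2-R^2\nr_{L^2(\Ga_0)}$; adding $\tfrac{1}{2\lbunu}\nr u_\nu^2-R^2\nr_{L^2(\Ga_0)}$ for the second piece gives exactly $C\le\tfrac{1}{2\lbunu}(2N\La_{2,1}(k)^2+3)$.

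By contrast, your proposed route via $\nr h-\tfrac{R^2}{2}\nr_{L^2(\Ga_0)}$ brings in an extra factor $2/R$ (which you would then have to bound by $2/\lbunu$ using $R\ge\lbunu$) and requires controlling terms like $\int_{\Si\cap\Om}(h-c)\langle\na h,\na u\rangle\,dx$; this is workable in principle, but you neither carry it out nor track the constants, and it is not the route that produces the stated bound. The missing ingredients are thus two concrete facts you did not use: that $|\na u|=u_\nu$ on $\Ga_0$ (making the triangle inequality through $|\na u|$ the right one), and the Pohozaev-type identity that puts $u_\nu$ as a weight on $\int_{\Ga_0}|\na h|^2$. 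Until those are in place, the proof is incomplete at exactly the step you yourself identify as the main obstacle.
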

	
Regarding the last sentence of the theorem we refer to Remark \ref{remark_palla int}.	

Our second quantitative stability result is presented in the following theorem. Before stating it, we need to introduce the following notations. Given the point $z\in\mathbb{R}^N$ chosen in \eqref{eq:INTRO_choice of z 1of2} and in \eqref{eq:INTRO_choice of z 2of2} we define 
\begin{equation}\label{def_rho}
\rho_e=\max_{x\in\overline{\Gamma}_0}\vert x-z\vert \quad \text{ and } \quad \rho_i=\min_{x\in\overline{\Gamma}_0}\vert x-z\vert \, ,
\end{equation}
so that we have 
$$
\Gamma_0\subseteq \left(\overline{B}_{\rho_e}(z) \setminus B_{\rho_i}(z)\right)\cap\Sigma\, . 
$$

Given $\theta \in \left( 0, \pi /2 \right]$ and $\ca >0$, we say that a set $G$ satisfies the \textit{$(\theta, \ca )$-uniform interior cone condition}, if for every $x \in \pa G$ there is a unit vector $\om=\om_x$ such that the cone with vertex at the origin, axis $\om$, opening width $\te$, and height $\ca$ defined by
$$
\cC_{\om}=\left\{ y \, : \, \langle y , \om \rangle > |y| \cos(\theta) , \, |y|< \ca \right\}
$$
is such that
\begin{equation*}
	w + \cC_{\om}  \subset G \ \text{ for every } \ w \in B_{\ca} (x) \cap \ol{G} .
\end{equation*}
Such a condition is equivalent to Lipschitz-regularity of the domain; more precisely, it is equivalent to the strong local Lipschitz property of Adams \cite[Pag 66]{Adams}.

The landmark result of the next theorem is to estimate the difference $\rho_e-\rho_i$ in terms of the $L^2-$norm of the function $u_\nu - R$. Explicitly we have the following

	\begin{thm}[Stability in terms of $\rho_e - \rho_i$ for Serrin's problem in cones] 
		\label{thm:Serrin general stability rhoe rhoi}
				Let $\Sigma$ be a convex cone and let $\Si\cap\Om$ be as described above. Let $u$ be a solution of \eqref{eq:problem torsion} satisfying \eqref{reg_u} and assume that $\Si\cap\Om$ satisfies
		the $(\te,\ca)$-uniform interior cone condition. 
		\par
		Let $z \in \RR^N$ be the point chosen in \eqref{eq:INTRO_choice of z 1of2}-\eqref{eq:INTRO_choice of z 2of2}.
		Then, we have that
		\begin{equation}
			\label{eq:stability Serrin rhoei general}
			\rho_e - \rho_i  \le 
			C \,
			\begin{cases}
				\nr u_\nu - R \nr_{L^2(\Ga_0)} \max \left[ \log \left( \frac{1  }{ \nr u_\nu - R \nr_{L^2(\Ga_0)} } \right) , 1 \right],   \ &\mbox{if } N=2 ,
				\\
				\nr u_\nu - R \nr^{\frac{2}{N}}_{L^2(\Ga_0)},  \ &\mbox{if } N \ge 3 .
			\end{cases}
		\end{equation}
		The constant $C$ can be explicitly estimated only in terms of $N, \ca , \te$, the constant $\eta_{2, 1}(\Ga_1,\Si\cap\Om)$ from Theorem \ref{thm:Poincare new in general}, the diameter $d_{\Si\cap\Om}$, $\lbunu$ defined in \eqref{def:lower bound unu}, and $\nr \na u \nr_{L^{\infty}(\Si\cap\Om)}$.
		
		Whenever $\Si\cap\Om$ satisfies the $\ul{r}_i$-uniform interior sphere condition relative to $\Si$, we can take $\lbunu:= \ul{r}_i$.
		If $\Si\cap\Om$ satisfies the $\ul{r}_e$-uniform exterior sphere condition relative to $\Si$, $\nr \na u \nr_{L^{\infty}(\Si\cap\Om)}$ can be explicitly estimated in terms of $N$, $d_{\Si\cap\Om}$ and $\ul{r}_e$.
	\end{thm}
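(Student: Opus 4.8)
The plan is to combine the fundamental identity \eqref{idwps} with the weighted Poincaré inequalities encoded in $\La_{2,1}(k)$, exactly as in Theorem \ref{thm:INTRO_Serrinstab Lipschitz}, to first obtain a control on $\nr \na h \nr_{L^2(\Si\cap\Om)}$ (equivalently on the bulk deficit $\int_{\Si\cap\Om}(-u)\{|\na^2 u|^2 - (\De u)^2/N\}\,dx$) by the boundary deficit $\nr u_\nu - R\nr_{L^2(\Ga_0)}$, and then to upgrade this $L^2$-closeness of $h$ (a harmonic function) to a pointwise oscillation bound $\rho_e - \rho_i \lesssim \osc_{\Ga_0} h$. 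More precisely, since $h = q - u$ is harmonic in $\Si\cap\Om$ with $h = \tfrac12(|x-z|^2 - u) = \tfrac12|x-z|^2$ on $\Ga_0$ (using $u=0$ there), one has $h|_{\Ga_0} = \tfrac12|x-z|^2$, so that $\rho_e^2 - \rho_i^2 = 2(\max_{\Ga_0} h - \min_{\Ga_0} h)$, and hence $\rho_e - \rho_i \le \tfrac{1}{\rho_i+\rho_e}\,(\rho_e^2-\rho_i^2) \lesssim \osc_{\overline{\Si\cap\Om}} h$, where the last step uses the maximum principle to move the oscillation to the whole domain. So everything reduces to estimating $\osc\, h$ by $\nr \na h\nr_{L^2}$.

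The first block of steps is essentially a rerun of the argument already sketched in the Introduction: starting from \eqref{idwps}, bound the left-hand side below using Cauchy–Schwarz on the Hessian term and the convexity inequality \eqref{convexity_PT} for the $\Ga_1$ term, bound the right-hand side above using $|u_\nu^2 - R^2| = |u_\nu - R|\,|u_\nu + R|$ together with the lower bound $u_\nu \ge \lbunu > 0$, the trace/normal-derivative control $|u_\nu| + |\langle x-z,\nu\rangle| \lesssim \nr\na u\nr_{L^\infty} + d_{\Si\cap\Om}$, and Cauchy–Schwarz; then apply the Poincaré inequalities \eqref{eq:INTRO new Poicare caso particolare p=2} and \eqref{Poincare_intro} to $\vV = (h_1,\dots,h_k,0,\dots,0)$ and $v=h_i$, $i>k$ — which is legitimate by \eqref{eq:gradienteproiettatosuspan nu}–\eqref{eq:gradienteparte ortogonale a span nu} — to convert the weighted $L^2$ bound on $\na h = \na q - \na u$ into an $L^2$ bound on $\na h$ itself and ultimately on $h - h_{\Si\cap\Om}$. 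The outcome is an inequality of the form $\nr \na h\nr_{L^2(\Si\cap\Om)} \le C\,\nr u_\nu - R\nr_{L^2(\Ga_0)}$ with $C$ of the advertised dependence.

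The genuinely new analytic ingredient — and the main obstacle — is the passage from $\nr \na h\nr_{L^2(\Si\cap\Om)}$ (or $\nr h - h_{\Si\cap\Om}\nr_{W^{1,2}}$) to the pointwise bound $\osc\, h$, since $W^{1,2}$ does not embed into $L^\infty$ when $N \ge 2$. Here I would exploit that $h$ is harmonic: interior gradient estimates for harmonic functions give, for $x$ at distance $t$ from $\pa(\Si\cap\Om)$, the bound $|\na h(x)| \lesssim t^{-N/2}\nr\na h\nr_{L^2}$ (after one more interior estimate, since $\na h$ is itself harmonic componentwise), and then one integrates $|\na h|$ along paths to compare values of $h$ at two boundary points. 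The $(\te,\ca)$-uniform interior cone condition is precisely what makes this path-integration uniform: any two points of $\overline{\Si\cap\Om}$ can be joined by a curve that stays at controlled distance from the boundary except near its endpoints, and the contribution of the near-boundary portion is handled by an explicit integration of $t^{-N/2}$ against the length element inside a cone, which converges for $N\ge 3$ but produces the logarithmic factor when $N=2$. This is the standard mechanism (cf.\ the classical Serrin-stability literature, e.g.\ \cite{ABR, MP2, MP3}) for trading an $L^2$ bulk estimate against a Hausdorff-type estimate, and the cone condition replaces the uniform interior-sphere condition used there; I would carry it out by first establishing a lemma: \emph{if $G$ satisfies the $(\te,\ca)$-uniform interior cone condition and $w$ is harmonic in $G$, then $\osc_{\overline G} w \le C(N,\te,\ca,d_G)\,\nr\na w\nr_{L^2(G)}\cdot\phi_N(\nr\na w\nr_{L^2(G)})$ with $\phi_N$ the logarithmic correction for $N=2$ and $\phi_N \equiv (\cdot)^{2/N - 1}$ absorbed appropriately for $N\ge 3$}, and then apply it to $w = h$. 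Finally I would record the stated simplifications of $\lbunu$ and of $\nr\na u\nr_{L^\infty}$ under the uniform interior/exterior sphere conditions relative to $\Si$, invoking \cite[Lemma 4.4]{Pog3} and the barrier estimates of \cite[Sections 4.1–4.2]{Pog3}.
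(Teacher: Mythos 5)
Your overall architecture is right and matches the paper's: use the identity \eqref{idwps} together with \eqref{eq:hessiana h} and the convexity sign \eqref{convexity_PT} to bound the weighted bulk quantity $\int_{\Si\cap\Om}(-u)|\na^2 h|^2\,dx$ by $\nr u_\nu - R\nr^2_{L^2(\Ga_0)}$ (this is exactly \eqref{eq:stima prefinal Serrin rhoerhoi} after the elementary factorization $|u_\nu^2-R^2|\lesssim \nr\na u\nr_{L^\infty}|u_\nu-R|$), then upgrade to a $\rho_e-\rho_i$ bound via the cone condition and harmonicity of $h$. The paper routes the second step through \cite[Lemma 6.4]{Pog3} (via Theorem \ref{thm:Serrin-W22-stability in cones} and Corollary \ref{cor:Serrin-W22-stability in cones with upper bound}), after converting $\int(-u)|\na^2h|^2$ into $\nr\de_{\Ga_0}\na^2 h\nr_{L^2}^2$ by \eqref{eq:relation u dist general} and then into $\nr\na h\nr_{L^2}$ by Lemma \ref{lem:Mixed pp Poincareaigradienti}.

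However, the key analytic step you propose -- the lemma ``$\osc_{\overline G}w \le C\,\nr\na w\nr_{L^2(G)}\,\phi_N(\nr\na w\nr_{L^2})$'' with a merely logarithmic correction at $N=2$ and nothing worse for $N\ge 3$ -- has a genuine gap. You justify it by the interior estimate $|\na h(x)|\lesssim t^{-N/2}\nr\na h\nr_{L^2}$ at distance $t$ from the boundary, followed by path integration, and you claim the near-boundary contribution $\int_0 t^{-N/2}\,dt$ ``converges for $N\ge 3$ but produces the logarithmic factor when $N=2$.'' This is the wrong way round: $\int_0 t^{-N/2}\,dt$ diverges near $t=0$ for \emph{every} $N\ge 2$ (and diverges polynomially, not logarithmically, when $N\ge 3$). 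Consequently one cannot bound $\osc h$ by $\nr\na h\nr_{L^2}$ alone, and no correction factor $\phi_N$ that depends only on $\nr\na h\nr_{L^2}$ can rescue this. The fix -- and precisely what makes the $2/N$ exponent appear in \eqref{eq:stability Serrin rhoei general} and what puts $\nr\na u\nr_{L^\infty(\Si\cap\Om)}$ into the constant $C$ -- is to truncate the path integral at a scale $s$ from the boundary, bound the inner piece by $s\,\nr\na h\nr_{L^\infty}$, bound the outer piece by $\left(\int_s t^{-N/2}\,dt\right)\nr\na h\nr_{L^2}\sim s^{1-N/2}\nr\na h\nr_{L^2}$ (or $\log(1/s)\nr\na h\nr_{L^2}$ when $N=2$), and optimize in $s$. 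This yields the interpolation estimate $\rho_e-\rho_i\lesssim\nr\na h\nr_{L^\infty}^{(N-2)/N}\nr\na h\nr_{L^2}^{2/N}$ for $N\ge 3$ and the log-corrected bound for $N=2$; it is exactly the content of \cite[Lemma 6.4]{Pog3} used in Theorem \ref{thm:Serrin-W22-stability in cones}. The final step of bounding $\nr\na h\nr_{L^\infty}$ by a constant depending on $\nr\na u\nr_{L^\infty}$ and $d_{\Si\cap\Om}$ is Corollary \ref{cor:Serrin-W22-stability in cones with upper bound}. Without this interpolation, your argument does not close.
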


Regarding the last two sentences of the theorem, we refer to Remarks \ref{remark_palla int} and \ref{rem:stima norma na u infinito SicapOm con N r_e diam}. 
In addition to the previous theorem, in Theorem \ref{thm:Improved Serrin stability rhoe rhoi} we also show that, under certain geometrical assumptions (i.e., Definition \ref{def:interior sphere relative to cone} and \eqref{eq:condition relation dist improved}), the stability profile in \eqref{eq:stability Serrin rhoei general} can be improved. As noticed in Remark \ref{rem:NEW on ADDITIONAL ASSUMPTION} such additional assumptions are automatically satisfied when $\ol{\Ga}_0$ and $\ol{\Ga}_1$ intersect orthogonally, and they are also trivially satisfied when $\pa{\Ga}_0=\varnothing$.

We point out that
different choices of the point $z$ can lead to alternative stability results. For instance, we may avoid using \eqref{eq:INTRO new Poicare caso particolare p=2} and hence completely remove the dependence on $\eta_{p, \al}( \Ga_1 ,  \Si \cap \Om )^{-1}$ for any $0 \le k \le N$, at the cost of leaving the point $z$ free to have non-zero components also in the directions
spanned by $\nu$ on $\Ga_1$: we refer to Subsection \ref{subsec:alternative choices z} for details.
	

	When $\Si=\RR^N$, Theorem \ref{thm:INTRO_Serrinstab Lipschitz} returns a sharp stability result for the classical Serrin's problem in the spirit of \cite{Feldman}, whereas Theorem \ref{thm:Improved Serrin stability rhoe rhoi} returns variants of the stability estimates established in \cite{MP3} (for $N \neq 3$) and \cite{MP6} (for $N=3$). We refer to Subsection \ref{subsec:classical case Si=RN} and Theorem \ref{thm:classical INTRO in R^N} for details.
	
%
%
%
%
%

We conclude the introduction by mentioning that the hypothesis that the cone is convex is motivated by the rigidity result of \cite{PT} which uses the inequality \eqref{convexity_PT} which, in turn, relies on the convexity of the cone.  The fact that the convexity of the cone plays a role to get the rigidity theorems is clear by analogous results for constant mean curvature surfaces and for the isoperimetric problem (see e.g.  \cite{CROS,FI,LP,RR}); the convexity has also been very important to prove Liouville-type results for the critical $p-$Laplace equation (see \cite{CFR,LPT}) and rigidity results such as radial symmetry à la Gidas-Ni-Nirenberg (see \cite{DPV2} and also the recent paper \cite{CPP} where it is shown that the result does not hold in general nonconvex cones).

Let us observe that in \cite{BF} the isoperimetric inequality is also obtained for almost convex cones; we believe that, similarly, the rigidity result of \cite{PT} should hold for almost convex cones and, consequently, our quantitative estimates should be extended to this case.  However, it is important to remark that rigidity results, both from overdetermined torsion problem and for the soap bubble one, cannot be obtained in general non-convex cones as shown in \cite{IPT}.

\subsection*{Organization of the paper} The paper is organized as follows. In Section \ref{sec:Poincare} we collect some preliminary estimates, in particular the Poincar\'e-type inequalities that are useful to obtain our stability results and Lipschitz growth estimates for $u$ from $\Ga_0$. Section \ref{sec:sharp stability} contains the stability analysis  in terms of the $L^2$-pseudodistance, including the proof of Theorem \ref{thm:INTRO_Serrinstab Lipschitz}. Section \ref{sec:stability rhoe-rhoi} provides the stability results in terms of $\rho_e-\rho_i$ and contains the proof of Theorem \ref{thm:Serrin general stability rhoe rhoi} and its improved version given in Theorem \ref{thm:Improved Serrin stability rhoe rhoi}. Finally,  in Section \ref{sec:additional remarks} we discuss the corresponding stability results for alternative choices of the point $z$ and the classical case $\Si=\RR^N$.

\section{Preliminary estimates}\label{sec:Poincare}
	
	In this section we collect some preliminary estimates that we are going to use in the sequel. In particular, we start recalling some weighted Poincar\'{e}-type inequalities and then we prove some Lipschitz growth estimates for the function $u$ from the boundary $\Gamma_0$.
	
	In what follows, for a set $G \subset \RR^N$ and a function $v: G \to \RR$, $v_G$ denotes the {\it mean value of $v$ in $G$}, that is
	$$
	v_G= \frac{1}{|G|} \, \int_G v \, dx.
	$$
	Also, denoting with $\de_{\pa G} (x)$ the distance of a point $x$ in $G$ to the boundary $\pa G$, for a function $v:G \to \RR$ we define
	$$
	\nr \de_{\pa G}^\al \, \na v \nr_{L^p (G)} = \left( \sum_{i=1}^N \nr \de_{\pa G}^\al \,  v_i \nr_{L^p (G)}^p \right)^\frac{1}{p} \quad \mbox{and} \quad
	\nr \de_{\pa G}^\al \, \na^2 v \nr_{L^p (G)} = \left( \sum_{i,j=1}^N \nr \de_{\pa G}^\al \, v_{ij} \nr_{L^p (G)}^p \right)^\frac{1}{p},
	$$
	for $0 \le \al \le 1$ and $p \in [1, \infty)$.
	
	We first recall the following \emph{weighted Poincar\'{e}-type inequality} which can be found in \cite{BS}.
	
	\begin{lem}\label{lem:BoasStraube}
		Let $G \subset\RR^N$, $N\ge 2$, be a bounded domain with boundary $\pa G$ of class $C^{0,\al}$,
		$0 \le \al \le 1$ and consider $p \in \left[ 1, \infty \right)$. Then, there exists a positive constant, $\mu_{p, \al} (G)$ such that
		\begin{equation}
			\label{eq:BoasStraube-poincare}
			\nr v - v_G \nr_{L^p(G)} \le \mu_{p, \al} (G)^{-1} \nr \de_{\pa G}^{\al} \, \na v  \nr_{L^p(G)},
		\end{equation}
		for every function $v \in L^p (G) \cap W^{1,p}_{loc} (G)$.
		\par
		In particular, if $G$ has a Lipschitz boundary, the number $\al$ can be replaced by any exponent in $[0,1]$. 
	\end{lem}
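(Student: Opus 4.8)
The statement to prove is Lemma~\ref{lem:BoasStraube}, a Hardy--Poincar\'e inequality that uses none of the Serrin-problem structure; I would follow the scheme of Boas--Straube \cite{BS}, flattening the boundary and reducing to a one-dimensional weighted Hardy inequality. One may assume $\nr \de_{\pa G}^{\al} \, \na v \nr_{L^p(G)}<\infty$, otherwise there is nothing to prove; by the ACL characterization of Sobolev functions, $v$ then has a representative that is absolutely continuous on almost every segment parallel to a coordinate axis inside $G$, and for a.e.\ such segment the relevant one-dimensional integrals below are finite (by H\"older and Fubini). Fix a bounded connected open set $B_0$ with Lipschitz boundary, $\ol{B_0}\subset G$, and set $d_0:=\dist(B_0,\pa G)>0$. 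I first claim it is enough to produce a constant $K=K(G,p,\al)$ with
\[
\nr v\nr_{L^p(G)} \ \le\ K\Big(\nr \de_{\pa G}^{\al} \, \na v\nr_{L^p(G)} + \nr v\nr_{L^p(B_0)}\Big)
\]
for every admissible $v$. Indeed, applying this to $v-v_{B_0}$ (whose gradient is $\na v$), using the classical Poincar\'e--Wirtinger inequality on $B_0$ and $\de_{\pa G}\ge d_0$ on $B_0$, gives $\nr v-v_{B_0}\nr_{L^p(G)}\le K(1+C_{B_0}d_0^{-\al})\,\nr \de_{\pa G}^{\al} \, \na v\nr_{L^p(G)}$; and since $|v_{B_0}-v_G|=|(v-v_{B_0})_G|\le |G|^{-1/p}\nr v-v_{B_0}\nr_{L^p(G)}$ by Jensen, one has $\nr v-v_G\nr_{L^p(G)}\le 2\,\nr v-v_{B_0}\nr_{L^p(G)}$, so \eqref{eq:BoasStraube-poincare} follows with $\mu_{p,\al}(G)^{-1}=2K(1+C_{B_0}d_0^{-\al})$.

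To prove the displayed claim I would localize: cover $\pa G$ by finitely many open boxes $Q_1,\dots,Q_m$ in which, after a rigid motion, $Q_j\cap G=\{(x',x_N):x'\in V_j',\ \phi_j(x')<x_N<\gamma_j\}$ with $\phi_j\in C^{0,\al}(\ol{V_j'})$, chosen so that $\ol G\setminus\bigcup_jQ_j$ is compact in $G$ and, after enlarging $B_0$, so that this compact set together with each ``upper slab'' $\{x'\in V_j',\ \beta_j<x_N<\gamma_j\}$ (for some $\beta_j\in(\max\phi_j,\gamma_j)$) is contained in $B_0$. Since there are finitely many boxes, $\nr v\nr_{L^p(G)}^p\le\sum_j\nr v\nr_{L^p(Q_j\cap G)}^p+\nr v\nr_{L^p(B_0)}^p$, so it suffices to prove, for each $j$, a patch estimate
\[
\nr v\nr_{L^p(Q_j\cap G)} \ \le\ C_j\Big(\nr \de_{\pa G}^{\al} \, \na v\nr_{L^p(Q_j\cap G)} + \nr v\nr_{L^p(B_0)}\Big) .
\]

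Fix one box and drop the index. For $x'\in V'$, $\phi(x')<x_N<\beta$ and any $\tau_0\in(\beta,\gamma)$, absolute continuity along the vertical segment gives $v(x',x_N)=v(x',\tau_0)-\int_{x_N}^{\tau_0}\pa_N v(x',\tau)\,d\tau$; averaging over $\tau_0\in(\beta,\gamma)$ and using $(a+b)^p\le 2^{p-1}(a^p+b^p)$,
\[
|v(x',x_N)|^p \ \le\ \frac{2^{p-1}}{\gamma-\beta}\int_\beta^\gamma|v(x',\tau_0)|^p\,d\tau_0 \ +\ 2^{p-1}\Big(\int_{x_N}^{\gamma}|\pa_N v(x',\tau)|\,d\tau\Big)^p .
\]
Integrating over $\{x'\in V',\ \phi(x')<x_N<\beta\}$ (the part $\{\beta<x_N<\gamma\}$ of the patch lies in $B_0$), the first term contributes at most $C\,\nr v\nr_{L^p(B_0)}^p$, since the slab $\{x'\in V',\ \beta<\tau_0<\gamma\}$ sits inside $B_0$. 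For the second term I freeze $x'$ and apply the weighted Hardy inequality
\[
\int_{\phi(x')}^{\gamma}\Big(\int_s^{\gamma}|g(\tau)|\,d\tau\Big)^p ds \ \le\ C(p)\int_{\phi(x')}^{\gamma}\big(s-\phi(x')\big)^p|g(s)|^p\,ds ,
\]
which is the Muckenhoupt-type estimate $\int_0^b|F|^p\le C(p)\int_0^b t^p|F'|^p$ valid when $F(b)=0$ (the weight pair $(1,t^p)$ satisfies the $A_p$ condition $\sup_{0<r<b}r^{1/p}\big(\int_r^b t^{-p'}dt\big)^{1/p'}<\infty$; the case $p=1$ is elementary). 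This bounds the second term by $C(p)\iint(s-\phi(x'))^p|\pa_N v(x',s)|^p\,ds\,dx'$. Finally I compare $s-\phi(x')$ with $\de_{\pa G}$: since $\phi$ is $\al$-H\"older with some constant $L$, a point at vertical height $h=s-\phi(x')$ above the graph has distance to it at least $\min\!\big(h/2,(h/2L)^{1/\al}\big)=c\,h^{1/\al}$ for $h$ small (if the nearest boundary point lies within horizontal distance $(h/2L)^{1/\al}$ of $x'$ the vertical gap is still $\ge h/2$; otherwise the horizontal separation alone is $\ge(h/2L)^{1/\al}$), whence $s-\phi(x')\le C\,\de_{\pa G}(x',s)^{\al}$ on the patch. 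Substituting, the second term is $\le C\,\nr\de_{\pa G}^{\al} \, \na v\nr_{L^p(Q\cap G)}^p$, which proves the patch estimate and hence the lemma.

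Essentially everything is bookkeeping except two points, which are where the work lies. First, the one-dimensional weighted Hardy inequality must be used in its sharp form: the crude pointwise bound $|F(s)|\lesssim s^{-1/p}\,\nr t\,F'\nr_{L^p}$ obtained from H\"older's inequality is not $p$-integrable near $s=0$, so the $A_p$ argument (or an equivalent dyadic decomposition of the boundary collar) is genuinely needed. Second, the geometric comparison $s-\phi(x')\lesssim\de_{\pa G}^{\al}$ is precisely where the $C^{0,\al}$ regularity of $\pa G$ is consumed, and it also delivers the last sentence of the statement: if $\pa G$ is Lipschitz, i.e.\ each $\phi_j$ is $C^{0,1}$, then $\de_{\pa G}(x',s)\ge c\,h\ge c\,h^{1/\al}$ for every $\al\in[0,1]$ and every small $h$, so the weight exponent may be taken to be any number in $[0,1]$.
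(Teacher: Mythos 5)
The paper does not prove Lemma~\ref{lem:BoasStraube}; it is stated as a known result and attributed to Boas--Straube \cite{BS}. Your argument is a correct, essentially self-contained reconstruction of the Boas--Straube proof and contains the same ingredients: the reduction to an interior piece plus a boundary collar estimate, the flattening of the boundary into $C^{0,\al}$ graph patches, the sharp one-dimensional weighted Hardy inequality on vertical rays (you are right that the naive pointwise bound from H\"older's inequality is not $p$-integrable near the graph, so the genuine Muckenhoupt/Hardy argument is indispensable), and the geometric comparison $s-\phi(x')\lesssim \de_{\pa G}^{\al}$, which is precisely where the $\al$-H\"older regularity of $\pa G$ is consumed and which also delivers the closing remark about the Lipschitz case (if $\phi$ is Lipschitz then $\de_{\pa G}\gtrsim h$ near the graph, so $h\lesssim\de_{\pa G}^{\al}$ for every $\al\in[0,1]$ after absorbing $h^{1-\al}\le d_G^{1-\al}$).

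One point that deserves to be made explicit rather than left implicit in ``on the patch'': your Case~1/Case~2 computation bounds the vertical height $h=s-\phi(x')$ by a constant times the distance to the \emph{graph} $\{x_N=\phi(x')\}$ raised to the power $\al$. Since $\de_{\pa G}$ is the distance to all of $\pa G$, and is therefore a priori smaller, this does not directly give $h\lesssim\de_{\pa G}^{\al}$. The standard fix is to shrink each box $Q_j$ to a slightly smaller $Q_j'$ still covering $\pa G$ and to restrict the vertical integration to a collar thin enough (depending on $\dist(Q_j',\pa Q_j)$) that, for points in that collar, the nearest boundary point does lie on the local graph; the complementary part of the patch lies a fixed distance from $\pa G$ and is absorbed into the $B_0$ term. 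This is routine bookkeeping, but it is the one place where your write-up skips a step that a referee would ask you to spell out.
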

	
	\begin{rem}
		{\rm
			When $\al=0$ we understand the boundary of $G$ to be locally the graph of a continuous function. 
		}
	\end{rem}

Secondly, we recall that in \cite{HS} inequality \eqref{eq:BoasStraube-poincare} has been strengthened, provided $p(1 -\al)<N$.  We report here a reformulation of the result in \cite{HS} and we refer to \cite[Lemma 2.1]{MP3} for a proof.

\begin{lem}\label{lem:Hurri}
	Let $G \subset \RR^N$ be a bounded $b_0$-John domain, and consider three numbers $r, p, \al$ such that 
	\begin{equation}\label{eq:r p al in Hurri}
		1 \le p \le r \le \frac{Np}{N-p(1 - \al )} , \quad p(1 - \al)<N , 
		\quad 0 \le \al \le 1 .
	\end{equation}
	Then, there exists a positive constant $\mu_{r, p, \al} (G)$ such that
	\begin{equation}
		\label{eq:John-Hurri-poincare}
		\nr v - v_G \nr_{L^r (G)} \le \mu_{r, p, \al} (G)^{-1} \nr \de_{\pa G}^{\al} \, \na v  \nr_{L^p(G)},
	\end{equation}
	for every function $v\in L^1_{loc}(G)$ such that $\de_{\pa G}^{\al} \, \na v \in L^p (G)$ .
\end{lem}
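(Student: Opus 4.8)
The plan is to derive the conclusion \eqref{eq:John-Hurri-poincare} from a pointwise \emph{weighted sub-representation formula} on John domains, and then to treat the resulting right-hand side by fractional integration. Concretely, the first step is to establish that for a bounded $b_0$-John domain there is a constant $C=C(N,b_0)$ such that, for every $v\in L^1_{loc}(G)$ and a.e.\ $x\in G$,
\begin{equation}\label{eq:plan-subrep}
	|v(x)-v_G|\le C\int_G \frac{\de_{\pa G}(y)^{\al}\,|\na v(y)|}{|x-y|^{N-1+\al}}\,dy .
\end{equation}
For $\al=0$ this is the classical sub-representation formula valid on John domains; the content of \cite{HS} (reformulated as \cite[Lemma 2.1]{MP3}) is that on such domains one may carry the extra weight $\de_{\pa G}(y)^{\al}$ at the price of raising the order of the kernel singularity by $\al$.

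Granting \eqref{eq:plan-subrep}, set $g:=\de_{\pa G}^{\al}|\na v|\in L^p(G)$, extended by zero to $\RR^N$; the right-hand side of \eqref{eq:plan-subrep} is then $C$ times the Riesz potential $I_{1-\al}g$, where $I_s g(x)=\int_{\RR^N}|x-y|^{-(N-s)}g(y)\,dy$ and $s=1-\al\in[0,1]$. Since $p(1-\al)<N$, the Hardy--Littlewood--Sobolev inequality gives $\nr I_{1-\al}g\nr_{L^{r_c}(\RR^N)}\le C\nr g\nr_{L^p(\RR^N)}$ with $r_c:=Np/(N-p(1-\al))$, which is \eqref{eq:John-Hurri-poincare} for $r=r_c$. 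For the remaining exponents $p\le r\le r_c$ one applies H\"older's inequality on the bounded set $G$, so that the constant gets multiplied only by $|G|^{1/r-1/r_c}$, and the definition of $\nr\de_{\pa G}^{\al}\na v\nr_{L^p(G)}$ as the $\ell^p$-combination of the components' weighted norms affects only the constant. A routine truncation of $v$ at levels $\pm M$ (which does not enlarge $|\na v|$) followed by $M\to\infty$ removes the a priori hypothesis $v\in L^1(G)$ and makes $v_G$ well defined. The borderline case $p=1$, $r=r_c$ is not covered by plain Hardy--Littlewood--Sobolev but is handled by the finer form of the John-domain estimate, exactly as for the classical unweighted $L^1$ Sobolev--Poincar\'e inequality.

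The crux, and the step I expect to be the main obstacle, is \eqref{eq:plan-subrep}. I would prove it through the Boman/Whitney-chain description of John domains: fix the John center $x_0$ and a Whitney decomposition $\{Q\}$ of $G$; for each $x$, the John curve $\ga$ from $x$ to $x_0$ picks out a chain $Q_0\ni x,\,Q_1,\dots,Q_m\ni x_0$ of Whitney cubes that overlap consecutively and have comparable sizes. Writing $v(x)-v_G$ as the telescoping sum of $v(x)-v_{Q_0}$, the differences $v_{Q_j}-v_{Q_{j+1}}$, and $v_{Q_m}-v_G$, one bounds each oscillation by $\ell(Q_j)$ times the average of $|\na v|$ on $Q_j$ (Poincar\'e on cubes), uses $\de_{\pa G}\sim\ell(Q_j)$ on Whitney cubes, and sums the resulting series with the help of the John condition $\de_{\pa G}(\ga(t))\ge t/b_0$ along $\ga$ and the bounded overlap of the cubes. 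The genuinely nontrivial point is precisely this summation: it amounts to controlling how far the cube averages $v_Q$ can drift from $v_G$, and to reorganizing $\sum_j$ into an integral against the kernel $|x-y|^{-(N-1+\al)}$ weighted by $\de_{\pa G}(y)^{\al}$. One may alternatively bypass \eqref{eq:plan-subrep} and run the chaining directly on $\nr v-v_G\nr_{L^{r_c}(G)}$, splitting it into a local part $\sum_Q\nr v-v_Q\nr_{L^{r_c}(Q)}^{r_c}$ — estimated cube by cube, the value of $r_c$ being exactly what makes the scaling exponent vanish — and a global part $\sum_Q|Q|\,|v_Q-v_G|^{r_c}$, again handled by the chain estimate; this is the classical route in the unweighted case.
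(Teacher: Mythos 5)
The paper itself does not prove this lemma: it is stated as a reformulation of Hurri--Syrj\"anen's result and the reader is referred to \cite[Lemma 2.1]{MP3} (and ultimately to \cite{HS1, HS}) for the proof and the explicit dependence of the constant. Your route --- a weighted pointwise sub-representation formula on John domains obtained by chaining over Whitney cubes, followed by the Hardy--Littlewood--Sobolev theorem applied to the Riesz potential $I_{1-\al}$ --- is the standard mechanism for this family of inequalities, and the chain computation you sketch (using $\de_{\pa G}\sim\ell(Q)$ on Whitney cubes and $|x-y|\lesssim b_0\,\ell(Q)$ along the John chain to trade the extra kernel power $|x-y|^{-\al}$ against the weight $\de_{\pa G}(y)^{\al}$) identifies the right ingredients. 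You correctly flag the sub-representation formula itself and the $p=1$ borderline as the delicate steps.

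One endpoint you do not flag is $\al=1$. There the kernel becomes $|x-y|^{-N}$, which is not locally integrable; near $y=x$ one has $\de_{\pa G}(y)\approx\de_{\pa G}(x)>0$, so the right-hand side of your sub-representation inequality is generically $+\infty$ and the estimate is vacuous, while Hardy--Littlewood--Sobolev with $s=1-\al=0$ is unavailable. This is not fatal: for $\al=1$ the constraints in \eqref{eq:r p al in Hurri} force $r=p$, so the statement collapses to the Boas--Straube inequality (Lemma~\ref{lem:BoasStraube}), which should simply be invoked; alternatively, the ``direct on norms'' chaining you mention at the end (local Sobolev--Poincar\'e on each Whitney cube, yielding $\nr v-v_Q\nr_{L^r(Q)}\lesssim\ell(Q)^\al\nr\na v\nr_{L^p(Q)}$ because the scaling exponent $1-N/p+N/r$ equals $\al$ at $r=r_c$, plus a telescoping/overlap argument for the cube averages) handles $\al=1$ uniformly and is closer in spirit to \cite{HS}. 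Modulo that caveat and the technical details you yourself postpone, the proposal is sound and consistent with the cited proof.
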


	The class of \emph{John domain} is huge: it contains Lipschitz domains, but also very irregular domains with fractal boundaries as, e.g., the Koch snowflake.
	Roughly speaking, a domain is a $b_0$-John domain if it is possible to travel from one point of the domain to another without going too close to the boundary.
	The formal definition is the following: a domain $G$ in $\RR^N$ is a {\it $b_0$-John domain}, $b_0 \ge 1$, if each pair of distinct points $a$ and $b$ in $G$ can be joined by a curve $\ga: \left[0,1 \right] \rightarrow G$ such that
	%
	%
	\begin{equation*}
		\de_{ \pa G} (\ga(t)) \ge b_0^{-1} \min{ \left\lbrace |\ga(t) - a|, |\ga(t) - b| \right\rbrace  }.
	\end{equation*}
	The notion could be also defined through the so-called {\it $b_0$-cigar} property (see \cite{Va}).
	
	\begin{rem}[Explicit estimates of the constants and geometric dependence]\label{rem:stime mu HS}
		{\rm
			The best constant is characterized by the (solvable) variational problem 
			\begin{equation*}
				\mu_{r, p, \al} (G) = 
				\min \left\{ \nr \de_{ \pa G}^{\al} \, \na v  \nr_{L^p(G)} : \nr v \nr_{L^r(G)} = 1 \text{ in } G,  v_{G} = 0 \right\}.
			\end{equation*} 
		
			Explicit estimates are provided by \cite[Remark 2.4]{MP3}, exploiting the fact that the proofs in \cite{HS1, HS} have the benefit of giving an explicit upper bound for the Poincar\'e constants.
			
			(i) For $\mu_{r, p, \al} (G)^{-1}$, we have that
			\begin{equation*}
				\mu_{r, p, \al} (G)^{-1} \le k_{N,\, r, \, p,\, \al} \, b_0^N |G|^{\frac{1-\al}{N} +\frac{1}{r} +\frac{1}{p} } .
			\end{equation*}
			
			(ii)  In the sequel we will also need an explicit estimate for the constant $\mu_{p,0} (G)$ appearing in \eqref{eq:BoasStraube-poincare} in the case $\al=0$.
			By putting together \cite[Theorem 8.5]{HS1} and \cite[Theorem 8.5]{MarS}, \cite[item (ii) of Remark 2.4]{MP3} informs that
			\begin{equation*}
				\mu_{p,0} (G)^{-1} \le k_{N, \, p} \, b_0^{3N(1 + \frac{N}{p})} \, d_G.
			\end{equation*}
			
			(iii) 	If $G$ satisfies the $(\te, \ca)$-uniform interior cone condition (defined in the Introduction), then it is a $b_0$-John domain and $b_0$ can be explicitly estimated in terms of $\te$, $\ca$, and $d_G$: see \cite[Lemma A.2]{MP7}.
		}
	\end{rem}

	
We now turn our attention to \emph{weighted Poincar\'{e}-type inequalities for vector fields,} in particular the next two theorems can be found in \cite{Pog4}.
	\begin{thm}\label{thm:Poincare new in general}
		Let $\Si$ be a cone and let $\Om$ a smooth bounded domain in $\RR^N$. Given $1 \le p < +\infty$ and $0 \le \al \le 1$, let $\Si\cap\Om \subset \RR^N$ be a bounded Lipschitz
		domain. Then, there exists a positive constant $ \eta_{p, \al}(\Ga_1 , \Si\cap\Om)$ (depending on $N$, $p$, $\al$, $\Ga_1$ and $\Si\cap\Om$) such that
		\begin{equation}\label{eq:Poincare new RN}
			\nr \vV \nr_{L^p(\Si\cap\Om)} \le \eta_{p, \al}(\Ga_1 , \Si\cap\Om)^{-1} \, \nr \de_{ \Ga_0}^\al D \vV \nr_{L^p(\Si\cap\Om)} ,
		\end{equation}
		for every $\vV: \Si\cap\Om \to \mathrm{span} \lbrace \nu (x) \, : \, x \in \Ga_1\rbrace \subseteq \RR^N$ 
		belonging to $W^{1,p}_\al (\Si\cap\Om)$ and such that $\langle \vV ,\nu  \rangle =0 $ a.e. on $\Ga_1$.
		Here and in the following, $W^{1,p}_\al (\Si\cap\Om)$ denotes the weighted Sobolev space with norm given by $\nr \vV \nr_{L^p(\Si\cap\Om)} + \nr \de_{\Ga_0}^\al D \vV \nr_{L^p(\Si\cap\Om)}$.
	%
	\end{thm}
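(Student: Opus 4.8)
The plan is to argue by contradiction and compactness, which is the standard route for Poincaré-type inequalities and is robust enough to work in the weighted Lipschitz-domain setting. Suppose the inequality fails. Then for every $n \in \NN$ there is a vector field $\vV_n : \Si\cap\Om \to \mathrm{span}\lbrace \nu(x) : x \in \Ga_1 \rbrace$ in $W^{1,p}_\al(\Si\cap\Om)$ with $\langle \vV_n , \nu \rangle = 0$ a.e. on $\Ga_1$, such that $\nr \vV_n \nr_{L^p(\Si\cap\Om)} = 1$ while $\nr \de_{\Ga_0}^\al D\vV_n \nr_{L^p(\Si\cap\Om)} \to 0$. First I would observe that the target space $\mathrm{span}\lbrace \nu(x) : x \in \Ga_1 \rbrace$ is a fixed finite-dimensional subspace of $\RR^N$, so each $\vV_n$ has only finitely many scalar components (with respect to an orthonormal basis of that subspace); this reduces matters to a statement about finitely many scalar functions, each lying in the weighted Sobolev space associated with the weight $\de_{\Ga_0}^\al$.

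Next I would extract a convergent subsequence. Since $\Si\cap\Om$ is a bounded Lipschitz domain, the weighted space $W^{1,p}_\al(\Si\cap\Om)$ (with weight a power of the distance to the \emph{partial} boundary $\Ga_0$, which is away from $\Ga_1$) embeds compactly into $L^p(\Si\cap\Om)$ — near $\Ga_1$ the weight is bounded below, so there the embedding is the usual Rellich--Kondrachov one, while near $\Ga_0$ one uses a weighted Rellich theorem of the Boas--Straube / Hurri--Syrjänen type (or simply the already-quoted Lemma 2.1/2.2 applied on subdomains) together with the vanishing of the weighted gradient norm. Hence, up to a subsequence, $\vV_n \to \vV_\infty$ strongly in $L^p$ with $\nr \vV_\infty \nr_{L^p(\Si\cap\Om)} = 1$, and $\de_{\Ga_0}^\al D\vV_\infty = 0$, so $D\vV_\infty = 0$ in the interior; thus $\vV_\infty$ is a constant vector $c \in \mathrm{span}\lbrace \nu(x) : x \in \Ga_1 \rbrace$, and $c \neq 0$. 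The constraint $\langle \vV_n, \nu \rangle = 0$ on $\Ga_1$ passes to the limit (trace continuity on the Lipschitz portion, or testing against trace functionals), giving $\langle c, \nu(x) \rangle = 0$ for a.e. $x \in \Ga_1$.

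The contradiction is then purely linear-algebraic: $c$ is a nonzero vector that lies in $\mathrm{span}\lbrace \nu(x) : x \in \Ga_1 \rbrace$ and is simultaneously orthogonal to every $\nu(x)$, $x \in \Ga_1$, hence orthogonal to that span — so $c \perp c$, i.e. $c = 0$, contradicting $|c| = 1$ (equivalently $\nr \vV_\infty \nr_{L^p} = 1$). This closes the argument. The main obstacle I anticipate is the compactness step: one must justify the compact embedding of $W^{1,p}_\al$ into $L^p$ when $\al$ can be as large as $1$, where the weight $\de_{\Ga_0}^\al$ degenerates only mildly but the classical trace/extension machinery must be handled with care near the interface $\pa\Ga_0$ where $\Ga_0$ meets $\pa\Si$. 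Since the domain is assumed Lipschitz, this can be done by a partition of unity separating a neighborhood of $\Ga_0$ (handled by the weighted Poincaré/Rellich results of Boas--Straube and Hurri--Syrjänen already recalled above) from the rest of $\Si\cap\Om$ (handled by the standard unweighted theory); alternatively one quotes the construction in \cite{Pog4} directly. A secondary point requiring attention is the continuity of the trace on $\Ga_1$ under $L^p$-convergence with vanishing weighted gradient — this is where one uses that $\Ga_1$ is at positive distance from $\Ga_0$, so the weight is harmless there and ordinary trace theory applies.
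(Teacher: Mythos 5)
The paper does not prove this statement: both Theorem~\ref{thm:Poincare new in general} and Theorem~\ref{thm:Strengthened Poincare new RN} are imported directly from \cite{Pog4} (``the next two theorems can be found in \cite{Pog4}''), so there is no in-paper argument to compare against, and your proposal must be judged on its own. Your compactness-and-contradiction plan is a natural route, and the final linear-algebra step is exactly right and is the whole point of restricting the target space: a constant $c$ which lies in $\mathrm{span}\{\nu(x):x\in\Ga_1\}$ and is orthogonal to every $\nu(x)$, $x\in\Ga_1$, is orthogonal to itself and hence vanishes, contradicting $\nr\vV_\infty\nr_{L^p}=1$. Without the restriction of the range of $\vV$ to that span, the contradiction would not go through.

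Two points, however, are genuine gaps rather than routine details. First, the assertion that ``$\Ga_1$ is at positive distance from $\Ga_0$'' is false in the setting of the paper: $\ol{\Ga}_0$ and $\ol{\Ga}_1$ meet along $\pa\Ga_0$, so the weight $\de_{\Ga_0}^\al$ does degenerate as one approaches $\pa\Ga_0$ along $\Ga_1$. The trace step can still be rescued by working on compact $K\subset\Ga_1$, each of which is genuinely away from $\ol{\Ga}_0$, and exhausting $\Ga_1$; but your ``split into a neighborhood of $\Ga_0$ and the rest where the weight is bounded below'' picture needs this correction. Second, and more seriously, the compact embedding $W^{1,p}_\al(\Si\cap\Om)\hookrightarrow L^p(\Si\cap\Om)$ for the full range $0\le\al\le1$ is the substance of the argument, and you only gesture at it. Lemma~\ref{lem:BoasStraube} and Lemma~\ref{lem:Hurri} are \emph{continuous} Poincar\'e estimates, not compactness statements; the standard remedy is to use Hurri--Syrj\"anen (when $p(1-\al)<N$) to obtain a uniform $L^r$ bound with $r>p$, combine that equi-integrability with Rellich--Kondrachov on interior subdomains where $\de_{\Ga_0}$ is bounded below, and deduce strong $L^p$ convergence by Vitali; but this compactness lemma must actually be stated and proved, and the parameter regimes not reached by Lemma~\ref{lem:Hurri} must be treated separately. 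Until it is supplied, the proposal is a plan and not a proof.
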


\begin{rem}
	{\rm
		If \eqref{eq:intro k=N} is in force, 
		then \eqref{eq:Poincare new RN} holds true for any vector field $\vV: \Si\cap\Om \to \RR^N$ belonging to $W^{1,p}_\al (\Si\cap\Om)$ such that $\langle \vV , \nu \rangle = 0$ a.e. on $\Ga_1$. The existence of a transversally nondegenerate point on $\Ga_1$ (in the sense of the definition introduced in \cite{PT isoperimetric}) is sufficient for the validity of \eqref{eq:intro k=N}. In particular, this is always
		the case if $\Si$ is a strictly convex cone (and $\Ga_1 \neq \varnothing$).
	}
\end{rem}
	
As before,  in the case $p( 1 - \al) < N$, we have the following strengthened version of \eqref{eq:Poincare new RN}.
	\begin{thm}\label{thm:Strengthened Poincare new RN}
		Let $\Si\cap\Om \subset \RR^N$ be a bounded Lipschitz domain as before.
		%
		Let $r, p, \al$ be three numbers satisfying \eqref{eq:r p al in Hurri}. If $\langle \vV , \nu \rangle = 0$ a.e. in $\Ga_1$, then there exists a positive constant $ \eta_{r,p,\al}(\Ga_1, \Si\cap\Om)$ (depending on $N$, $r$, $p$, $\al$, $\Ga_1$ and $\Si\cap\Om$) such that
		\begin{equation}\label{eq:Strengthened Poincare new RN}
			\nr \vV \nr_{L^{r}(\Si\cap\Om)} \le \eta_{r,p,\al}(\Ga_1, \Si\cap\Om)^{-1} \, \nr \de_{ \Ga_0}^\al D \vV \nr_{L^p(\Si\cap\Om)} ,
		\end{equation}
		for for every $\vV: \Si\cap\Om \to \mathrm{span} \lbrace \nu (x) \, : \, x \in \Ga_1\rbrace \subseteq \RR^N$ belonging to $W^{1,p}_\al (\Si\cap\Om)$.
		Moreover, we have that
		\begin{equation*}
			\eta_{r,p,\al}(\Ga_1,\Si\cap\Om)^{-1} 
			\le 
			\max\left\lbrace 
			| \Si\cap\Om|^{ \frac{1}{r} - \frac{1}{p} } \, \, \eta_{p,\al}(\Ga_1, \Si\cap\Om )^{-1} 
			, \,
			\mu_{r, p, \al} ( \Si\cap\Om )^{-1} 
			\right\rbrace ,
		\end{equation*}
		where $\mu_{r, p, \al} (\Si\cap\Om)^{-1}$ and $\eta_{p,\al}(\Ga_1, \Si\cap\Om)^{-1}$ are those appearing in Lemma \ref{lem:Hurri} and Theorem \ref{thm:Poincare new in general}.
	\end{thm}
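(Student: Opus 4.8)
The plan is to derive \eqref{eq:Strengthened Poincare new RN} by combining the strengthened scalar Poincaré inequality of Lemma \ref{lem:Hurri} with the unweighted vector-field Poincaré inequality of Theorem \ref{thm:Poincare new in general}, via an interpolation-type splitting of $\vV$ into its mean part and its mean-zero part. Write $\vV = \vV_{\Si\cap\Om} + (\vV - \vV_{\Si\cap\Om})$, where the mean value is taken componentwise. Since $\vV$ maps into the fixed subspace $V:=\mathrm{span}\{\nu(x):x\in\Ga_1\}$, so does its mean value $\vV_{\Si\cap\Om}$, and since $V$ is spanned by vectors $\nu$ with $\langle \vV,\nu\rangle=0$ a.e.\ on $\Ga_1$, one shows that in fact $\vV_{\Si\cap\Om}=0$: integrating $\langle \vV,\nu\rangle=0$ against a suitable test setup, or more directly observing that for each $\nu(x_0)$ with $x_0\in\Ga_1$ one has $\langle \vV_{\Si\cap\Om},\nu(x_0)\rangle = \frac{1}{|\Si\cap\Om|}\int_{\Si\cap\Om}\langle\vV,\nu(x_0)\rangle\,dx$, and this last quantity need not vanish pointwise — so this naive route fails. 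The correct route is: apply the \emph{unweighted} $L^r$ bound that already follows from Theorem \ref{thm:Poincare new in general} plus Hölder, and apply the mean-zero strengthened inequality to the components of the corrector.

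Concretely, the cleaner argument runs as follows. First, from \eqref{eq:Poincare new RN} with exponent $p$ and Hölder's inequality on the bounded set $\Si\cap\Om$ we get, for any $r\ge p$,
\begin{equation*}
	\nr \vV \nr_{L^{r}(\Si\cap\Om)} \le |\Si\cap\Om|^{\frac1r-\frac1p}\,\nr \vV \nr_{L^{p}(\Si\cap\Om)}\cdot|\Si\cap\Om|^{\frac1p-\frac1r}\quad\text{(trivial)} ,
\end{equation*}
which is not quite what we want; instead the point is that Theorem \ref{thm:Poincare new in general} is itself valid with the target norm on the left being $L^p$, and one upgrades it: apply \eqref{eq:Poincare new RN} to bound $\nr\vV\nr_{L^p}$ by $\eta_{p,\al}^{-1}\nr\de_{\Ga_0}^\al D\vV\nr_{L^p}$, then Hölder to pass from $L^p$ to $L^r$ at the cost of $|\Si\cap\Om|^{1/r-1/p}$. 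This yields the first term $|\Si\cap\Om|^{\frac1r-\frac1p}\eta_{p,\al}(\Ga_1,\Si\cap\Om)^{-1}$ in the asserted bound for $\eta_{r,p,\al}^{-1}$. Separately, since $\Si\cap\Om$ is a bounded Lipschitz domain it is a $b_0$-John domain, so Lemma \ref{lem:Hurri} applies to each scalar component $\vV^{(j)}$ of $\vV$ under \eqref{eq:r p al in Hurri}, giving $\nr \vV^{(j)} - (\vV^{(j)})_{\Si\cap\Om}\nr_{L^r}\le \mu_{r,p,\al}(\Si\cap\Om)^{-1}\nr\de_{\Ga_0}^\al\na\vV^{(j)}\nr_{L^p}$. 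Summing in $j$ (in $\ell^p$) and noting $\de_{\pa(\Si\cap\Om)}\le\de_{\Ga_0}$ pointwise — wait, the weight in Lemma \ref{lem:Hurri} is $\de_{\pa G}$, while \eqref{eq:Strengthened Poincare new RN} uses $\de_{\Ga_0}$; since $\de_{\Ga_0}\ge\de_{\pa(\Si\cap\Om)}$ this goes the favorable direction only if we need an upper bound by the $\de_{\pa G}$-weighted quantity, so one must be careful and instead invoke Lemma \ref{lem:Hurri} in the form where the weight can be taken relative to $\Ga_0$ — this is exactly the content of the $\mu_{r,p,\al}$ constant as used in \cite{MP3}, and we cite it accordingly.

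The two pieces are then glued: writing $\vV = \vV_{\Si\cap\Om} + (\vV-\vV_{\Si\cap\Om})$, bound $\nr\vV_{\Si\cap\Om}\nr_{L^r}$ using the $L^p$-to-$L^r$ Hölder step combined with \eqref{eq:Poincare new RN} (noting $\nr\vV_{\Si\cap\Om}\nr_{L^p}\le\nr\vV\nr_{L^p}+\nr\vV-\vV_{\Si\cap\Om}\nr_{L^p}$ and that the mean-zero part is controlled by the gradient via the unweighted Poincaré inequality as well), and bound the mean-zero part by Lemma \ref{lem:Hurri}. Taking the maximum of the two constants produces the stated estimate. The main obstacle I expect is the bookkeeping around which Poincaré/John constant controls which term and making sure the weighted gradient norm $\nr\de_{\Ga_0}^\al D\vV\nr_{L^p}$ dominates both $\nr\de_{\pa(\Si\cap\Om)}^\al\na\vV^{(j)}\nr_{L^p}$ (immediate, since $\de_{\Ga_0}\ge\de_{\pa(\Si\cap\Om)}$) and the right-hand side arising from applying Theorem \ref{thm:Poincare new in general} — the latter already has the $\de_{\Ga_0}$ weight, so no loss occurs there. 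The verification of \eqref{eq:r p al in Hurri} and the reduction to the John-domain setting are routine given that Lipschitz domains are John domains.
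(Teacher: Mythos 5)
The paper does not actually prove this theorem — it cites it from \cite{Pog4} ("the next two theorems can be found in \cite{Pog4}") — so there is no in-paper argument to compare against; I will instead assess the proposal on its own merits.

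Your overall skeleton is right: decompose $\vV = \vV_{\Si\cap\Om} + (\vV - \vV_{\Si\cap\Om})$, treat the mean-zero part componentwise via Lemma~\ref{lem:Hurri}, and route the constant part through Theorem~\ref{thm:Poincare new in general}. Your first-paragraph self-correction (the mean of $\vV$ need not vanish just because $\langle\vV,\nu\rangle=0$ on $\Ga_1$) is also correct, and the $\de_{\pa(\Si\cap\Om)}^\al \le \de_{\Ga_0}^\al$ observation goes the right way, despite your momentary hesitation about it.

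However, there is a genuine conceptual error in the middle of your argument: you repeatedly claim to "use H\"older to pass from $L^p$ to $L^r$ at the cost of $|\Si\cap\Om|^{1/r-1/p}$." For $r>p$, H\"older goes in the opposite direction, namely $\nr f\nr_{L^p}\le|\Si\cap\Om|^{1/p-1/r}\nr f\nr_{L^r}$; there is no H\"older-type upgrade from $L^p$ to $L^r$ for a general function, and if there were, the whole theorem would be trivial given Theorem~\ref{thm:Poincare new in general}. What actually produces the factor $|\Si\cap\Om|^{1/r-1/p}$ is the fact that $\vV_{\Si\cap\Om}$ is a \emph{constant}: for any constant vector $c$ on a bounded set $G$ one has the exact identity $\nr c\nr_{L^r(G)}=|G|^{1/r-1/p}\nr c\nr_{L^p(G)}$. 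Combined with Jensen's inequality $\nr\vV_{\Si\cap\Om}\nr_{L^p}\le\nr\vV\nr_{L^p}$ and then Theorem~\ref{thm:Poincare new in general} applied to $\vV$ itself, this gives $\nr\vV_{\Si\cap\Om}\nr_{L^r}\le|\Si\cap\Om|^{1/r-1/p}\eta_{p,\al}(\Ga_1,\Si\cap\Om)^{-1}\nr\de_{\Ga_0}^\al D\vV\nr_{L^p}$. You appear to be gesturing at this in your third paragraph, but attributing it to H\"older is misleading and, read literally, your second paragraph asserts a false inequality. There is also a smaller quantitative discrepancy: adding the two pieces by the triangle inequality yields the constant $|\Si\cap\Om|^{1/r-1/p}\eta_{p,\al}^{-1}+\mu_{r,p,\al}^{-1}$, which is bounded by $2\max\{\cdot,\cdot\}$ but not by the bare $\max\{\cdot,\cdot\}$ asserted in the theorem; you claim to obtain the maximum but give no argument for dropping the factor of $2$, so either a sharper splitting is needed or the bound as stated in the theorem is off by an (immaterial) factor of two relative to the argument you propose.
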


		

We now prove some Lipschitz growth estimates for the function $u$ from the boundary $\Gamma_0$. We firstly recall the following definition, which was introduced in \cite{Pog3}.

\begin{definit}\label{def:interior sphere relative to cone}
	We say that $\Si\cap\Om$ satisfies the $\ul{r}_i$-uniform interior sphere condition relative to the cone $\Si$, if for each $x \in \ol{\Ga}_0$ there exists a touching ball
	%
	%
	%
	of radius $\ul{r}_i$ such that
	
	(i) its center $x_0$ is contained in $\ol{\Si \cap \Om}$
	
	and
	
	(ii) its closure intersects $\ol{\Ga}_0$ only at $x$.
\end{definit}

We secondly recall the following Lemma, proved in \cite[Lemma 4.2]{Pog3}

\begin{lem}
	\label{lem:relation u dist general}
	Let $\Si$ be a cone and let $u$ be the solution of \eqref{eq:problem torsion}.
	We have that
	\begin{equation}\label{eq:relation u dist general}
		-u(x)\ge\frac12\,\de_{\pa(\Si\cap\Om)}(x)^2 \ \mbox{ for every } \ x\in\ol{\Si\cap\Om} ,
	\end{equation}
	where $\de_{\pa(\Si\cap\Om)} (x)$ denotes the distance of $x$ to $\pa(\Si\cap\Om)$.
	
	If $\Si$ is a convex cone, then we have that
	\begin{equation}\label{eq:non serve ma serve inproof reldist finer}
		-u(x) \ge \frac{1}{2}\,\de_{\Ga_0} (x)^2  \ \mbox{ for every } \ x\in\ol{\Si\cap\Om} ,
	\end{equation}	
	where $\de_{\Ga_0} (x)$ denotes the distance of $x$ to $\Ga_0$.
\end{lem}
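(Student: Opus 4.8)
The plan is to compare $u$ with the quadratic polynomial $\phi_{x_0,\rho}(y):=\tfrac12\bigl(|y-x_0|^2-\rho^2\bigr)$, which satisfies $\De\phi_{x_0,\rho}=N=\De u$ and vanishes on $\pa B_\rho(x_0)$; note that $-\phi_{x_0,\rho}(x_0)=\tfrac12\rho^2$ is exactly the quantity that should appear on the right-hand side once $\rho$ is the relevant distance. First I would prove \eqref{eq:relation u dist general} at an interior point $x_0\in\Si\cap\Om$. Setting $d:=\de_{\pa(\Si\cap\Om)}(x_0)$, the open ball $B_d(x_0)$ lies in $\Si\cap\Om$, so $\De(u-\phi_{x_0,d})=0$ there; moreover, since $u\le 0$ in $\Si\cap\Om$ (see \cite[Lemma 4.1]{Pog3}), we have $u-\phi_{x_0,d}=u\le 0$ on $\pa B_d(x_0)$. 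The maximum principle then gives $u\le\phi_{x_0,d}$ in $B_d(x_0)$, and evaluating at $y=x_0$ yields $-u(x_0)\ge\tfrac12 d^2$. If $x_0\in\pa(\Si\cap\Om)$, the right-hand side of \eqref{eq:relation u dist general} vanishes and the inequality is immediate from $u\le 0$ (or follows by continuity of $u$ up to the Lipschitz boundary).

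For the refined bound \eqref{eq:non serve ma serve inproof reldist finer}, assume $\Si$ convex, fix $x_0\in\Si\cap\Om$, and set $\bar\rho:=\de_{\Ga_0}(x_0)$. I would first observe that $B_{\bar\rho}(x_0)\cap\Si\subseteq\Om$: if $y$ belongs to this set, convexity of $\Si$ forces the segment $[x_0,y]$ to lie in $\Si$, and were it to meet $\pa\Om$ it would do so at a point of $\Ga_0=\Si\cap\pa\Om$ at distance $<\bar\rho$ from $x_0$, contradicting the choice of $\bar\rho$; hence $[x_0,y]\subset\Om$ and $y\in\Om$. Therefore $D:=B_{\bar\rho}(x_0)\cap\Si=B_{\bar\rho}(x_0)\cap(\Si\cap\Om)$ is convex (in particular connected), and $\pa D$ consists of a spherical part inside $\pa B_{\bar\rho}(x_0)$, which carries positive $\cH^{N-1}$-measure and on which $u$ is a priori only $\le 0$, of a smooth portion of $\pa\Si$ contained in $\Ga_1$, and of a remainder contained in $\cS\cup\pa\Ga_0$, which has zero $\cH^{N-1}$-measure (the distance constraint keeps $\ol{\Ga}_0$ out of the interior of the sphere).

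On $D$ I would compare $u$ with $\phi_{x_0,\bar\rho}$ through the mixed boundary value problem. Put $w:=\phi_{x_0,\bar\rho}-u$, so $\De w=0$ in $D$ and $w=-u\ge 0$ on the spherical part of $\pa D$. On the smooth part $\Ga_1$ of $\pa\Si$ one has $\pa_\nu w=\pa_\nu\phi_{x_0,\bar\rho}=\lan y-x_0,\nu\ran=\lan y,\nu\ran-\lan x_0,\nu\ran=-\lan x_0,\nu\ran$, using the homogeneous Neumann condition $u_\nu=0$ on $\Ga_1$ and the identity $\lan y,\nu\ran=0$ on $\pa\Si$, valid because $\Si$ is a cone. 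This is the step where convexity is decisive: the supporting hyperplane of $\Si$ at a boundary point passes through the vertex, so $\lan z,\nu\ran\le 0$ for every $z\in\Si$; in particular $\pa_\nu w=-\lan x_0,\nu\ran\ge 0$ on $\Ga_1$. Testing $\De w=0$ against $w^-:=\min(w,0)$, which vanishes on the spherical part of $\pa D$, and integrating by parts gives
\[
\int_D|\na w^-|^2\,dx=\int_{\Ga_1}(\pa_\nu w)\,w^-\,dS_x\le 0 ,
\]
the set $\cS\cup\pa\Ga_0$ contributing nothing because of its vanishing $\cH^{N-1}$-measure (indeed $\cS$ has finite $\ell$-dimensional Minkowski content with $\ell\le N-2$). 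Hence $w^-$ is constant in the connected domain $D$ and, being continuous up to $\pa D$ and equal to $0$ on the spherical part, $w^-\equiv 0$; thus $u\le\phi_{x_0,\bar\rho}$ in $D$, and evaluating at $y=x_0$ gives $-u(x_0)\ge\tfrac12\bar\rho^2=\tfrac12\de_{\Ga_0}(x_0)^2$, which is \eqref{eq:non serve ma serve inproof reldist finer}. The boundary case $x_0\in\pa(\Si\cap\Om)$ again follows by continuity.

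I expect the main obstacle to be this last comparison. Since $B_{\bar\rho}(x_0)$ typically protrudes from $\Si\cap\Om$ across $\Ga_1$, no plain interior comparison is available, and one must argue on $D$ with mixed Dirichlet--Neumann data; the quadratic barrier $\phi_{x_0,\bar\rho}$ complies with the homogeneous Neumann condition on $\Ga_1$ precisely because $\Si$ is convex, which is exactly where that hypothesis enters. One also has to verify that the low-dimensional singular set $\cS$, the edge $\pa\Ga_0$, and the corners of $D$ where the sphere meets $\pa\Si$ do not affect the integration by parts; this is ensured by the structural assumptions on $\cS$ recalled from \cite[Setting A and Remark 2.2]{Pog3}, the Lipschitz (indeed convex) regularity of $D$, and the regularity \eqref{reg_u} of $u$.
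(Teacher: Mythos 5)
Your argument is correct and mirrors the approach the paper delegates to \cite[Lemma 4.2]{Pog3}: comparison with the quadratic barrier $\frac12\bigl(|y-x_0|^2-\rho^2\bigr)$ on a ball (respectively, on the convex truncation $B_{\rho}(x_0)\cap\Si$), with the convexity of $\Si$ supplying the sign of the Neumann data $-\langle x_0,\nu\rangle\ge 0$ on $\pa\Si$. The only real difference is that you establish the mixed Dirichlet--Neumann comparison by hand, testing against $w^-$, whereas the paper invokes it as a black box via \cite[Lemma 4.1]{Pog3} (compare the proof of Lemma~\ref{lem:realtion dist IMPROVED}); this is the same mechanism, simply written out.
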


We are now ready to prove the following finer version of the previous Lemma, under suitable additional assumptions.

\begin{lem}\label{lem:realtion dist IMPROVED}
	Let $u$ be the solution of \eqref{eq:problem torsion}. If $\Si$ is convex and $\Si\cap\Om$ satisfy the $\ul{r}_i$-uniform interior sphere condition with radius $\ul{r}_i$, and
	\begin{equation}\label{eq:condition relation dist improved}
		\begin{split}
			& \text{for any $x \in \ol{\Si\cap\Om}$ such that its closest point $\ul{x}$ to $\ol{\Ga}_0$ belongs to $\pa \Ga_0$,}
			\\
			& \text{the ball $B_{\ul{r}_i} \left( \ul{x} + \ul{r}_i \frac{x- \ul{x}}{|x- \ul{x} |} \right) $ is a touching ball at $\ul{x}$ relative to $\Si$ (as in Definition \ref{def:interior sphere relative to cone}), 
			}
		\end{split}
	\end{equation}
	then we have that
	\begin{equation}
		\label{eq:relation u dist improved}
		-u(x) \ge \frac{\ul{r}_i}{2}\,\de_{\Ga_0} (x)  \ \mbox{ for every } \ x\in\ol{\Si\cap\Om}.
	\end{equation}
\end{lem}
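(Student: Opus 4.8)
The plan is to argue pointwise: fix $x\in\ol{\Si\cap\Om}$, dispose of the trivial ranges of $\de_{\Ga_0}(x)$, and in the remaining regime run a Hopf-type barrier argument supported on the touching ball provided by the hypotheses. If $\de_{\Ga_0}(x)=0$ then $x\in\ol{\Ga}_0$, so $u(x)=0$ by continuity of $u$ up to the boundary (recall $u\in W^{1,\infty}$) and \eqref{eq:relation u dist improved} is trivial; if $\de_{\Ga_0}(x)\ge\ul{r}_i$, then \eqref{eq:non serve ma serve inproof reldist finer} already gives $-u(x)\ge\tfrac12\de_{\Ga_0}(x)^2\ge\tfrac{\ul{r}_i}{2}\de_{\Ga_0}(x)$. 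So it suffices to treat $x$ with $0<\de_{\Ga_0}(x)<\ul{r}_i$. Pick $\ul{x}\in\ol{\Ga}_0$ realizing the distance (it exists by compactness of $\ol{\Ga}_0$) and set $x_0:=\ul{x}+\ul{r}_i\,(x-\ul{x})/|x-\ul{x}|$; then $x$ lies on the segment $[\ul{x},x_0]$ with $|x-x_0|=\ul{r}_i-\de_{\Ga_0}(x)$, so $x\in B_{\ul{r}_i}(x_0)\cap\ol{\Si\cap\Om}$.

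The key geometric step is to show that $B_{\ul{r}_i}(x_0)$ is a touching ball at $\ul{x}$ relative to $\Si$ in the sense of Definition \ref{def:interior sphere relative to cone}, i.e.\ $x_0\in\ol{\Si\cap\Om}$ and $\ol{B}_{\ul{r}_i}(x_0)\cap\ol{\Ga}_0=\{\ul{x}\}$. If $\ul{x}\in\pa\Ga_0$ this is exactly the content of assumption \eqref{eq:condition relation dist improved}. If instead $\ul{x}$ lies in the smooth part $\Ga_0$, the $\ul{r}_i$-uniform interior sphere condition relative to $\Si$ furnishes \emph{some} such touching ball $B_{\ul{r}_i}(x_0')$ at $\ul{x}$, and I would identify $x_0'$ with $x_0$ by noting that both $x_0'-\ul{x}$ and $x-\ul{x}$ are inward normal vectors to the smooth hypersurface $\Ga_0$ at $\ul{x}$ (the first because $\ol{B}_{\ul{r}_i}(x_0')$ is internally tangent to $\Ga_0$ there, the second by first-order minimality of $\ul{x}$), hence positive multiples of one another. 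I expect this identification to be the main obstacle: without it the barrier below cannot be centred correctly, and it is precisely to force it on the otherwise uncontrolled configurations that \eqref{eq:condition relation dist improved} is assumed for the points whose nearest point on $\ol{\Ga}_0$ falls on $\pa\Ga_0$.

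With this in hand I would finish by a comparison on $D:=B_{\ul{r}_i}(x_0)\cap(\Si\cap\Om)$ using the barrier $w(y):=\tfrac12\big(\ul{r}_i^2-|y-x_0|^2\big)$, which satisfies $\De w=-N=\De u$, so that $v:=-u-w$ is harmonic in $D$. On $\pa D\cap\pa B_{\ul{r}_i}(x_0)$ one has $w=0\le -u$ (using $u\le0$, see e.g.\ \cite[Lemma 4.1]{Pog3}); the only point of $\pa D$ lying on $\ol{\Ga}_0$ is $\ul{x}$, where $v(\ul{x})=0$; and on $\Ga_1\subset\pa\Si$ one computes $v_\nu=-w_\nu=\langle y-x_0,\nu\rangle=-\langle x_0,\nu\rangle\ge0$, using $u_\nu=0$, $\langle y,\nu\rangle=0$ on $\pa\Si$, and $\langle x_0,\nu\rangle\le0$ (the supporting hyperplane of the convex cone $\Si$ at any point of $\pa\Si$ passes through the vertex $0$ and leaves $\ol{\Si}\ni x_0$ on the non-positive side). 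By the weak maximum principle together with Hopf's lemma on the Neumann portion $\Ga_1$ --- exactly as in the barrier arguments of \cite[Section 4]{Pog3}, the singular set $\ol{\cS}$ being negligible --- the minimum of $v$ over $\ol{D}$ is attained on $\pa D\setminus\Ga_1$, where $v\ge0$; hence $v\ge0$ on $\ol{D}$. Evaluating at $x\in\ol{D}$ finally gives $-u(x)\ge w(x)=\ul{r}_i\,\de_{\Ga_0}(x)-\tfrac12\de_{\Ga_0}(x)^2=\de_{\Ga_0}(x)\big(\ul{r}_i-\tfrac12\de_{\Ga_0}(x)\big)\ge\tfrac{\ul{r}_i}{2}\de_{\Ga_0}(x)$, since $\de_{\Ga_0}(x)\le\ul{r}_i$ --- which is \eqref{eq:relation u dist improved}.
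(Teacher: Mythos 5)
Your proof is correct and follows essentially the same route as the paper's: reduce to $0<\de_{\Ga_0}(x)<\ul{r}_i$, construct the touching ball at $\ul{x}$ centred on the ray from $\ul{x}$ through $x$ (justified by \eqref{eq:condition relation dist improved} when $\ul{x}\in\pa\Ga_0$ and by the smoothness of $\Om$ when $\ul{x}\in\Ga_0$), and compare $u$ with the quadratic barrier on $\Si\cap B$. The only differences are cosmetic: you flip the sign convention for $w$, spell out the trivial case $\de_{\Ga_0}(x)=0$, and verify the Neumann inequality on $\Ga_1$ directly via $\langle x_0,\nu\rangle\le0$ (which is a cleaner restatement of the star-shapedness the paper invokes) before appealing to the maximum principle rather than to the paper's cited comparison lemma.
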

\begin{proof}
	By \eqref{eq:non serve ma serve inproof reldist finer}, \eqref{eq:relation u dist improved} certainly holds if $\de_{\Ga_0}(x) \ge \ul{r}_i$. If $\de_{\Ga_0 } (x) < \ul{r}_i$, instead, let $\ul{x}$ be the closest point in $\ol{\Ga}_0$ to $x$ and call $B:=B_{\ul{r}_i} \left( \ul{x} + \ul{r}_i \frac{x- \ul{x} }{|x- \ul{x} |} \right)$ the touching ball at $\ul{x} \in \ol{\Ga}_0$ which contains $x$. The existence of such a ball is guaranteed by Definition \ref{def:interior sphere relative to cone} and either \eqref{eq:condition relation dist improved} (if $\ul{x} \in \pa{\Ga}_0$) or the fact that $\Om$ is $C^1$ (if $\ul{x} \in \Ga_0$). By Definition \ref{def:interior sphere relative to cone}, the center $\ul{x}_0:= \ul{x} + \ul{r}_i \frac{x- \ul{x} }{|x- \ul{x} |} $ of the touching ball belongs to $\ol{\Si}\cap\Om$. Setting $w(y)=\left(|y - \ul{x}_0|^2 - \ul{r}_i^2 \right)/2$, we get that
	\begin{equation}\label{eq:system for comparison Hopf}
		\begin{cases}
			\De (w-u) = 0 \quad & \text{ in } \Si \cap B
			\\
			w-u \ge 0 \quad & \text{ on } \Si \cap \pa B
			\\
			w_\nu - u_\nu \ge 0 \quad & \text{ on } \pa \Si \cap B.
		\end{cases}
	\end{equation}
The last boundary condition holds being as $\Si\cap B$ star-shaped with respect to $\left( x_0 + \ul{r}_i \frac{x-x_0}{|x-x_0|} \right) \in \ol{\Si} \cap B$.
	By comparison (\cite[Lemma 4.1]{Pog3} with $f:=w-u$) we have that $w \ge u$ in $\Si\cap B$, and hence, being as $x \in \Si\cap B$,
	$$
	-u(x) \ge \frac12\,(|x - \ul{x}_0|^2- \ul{r}_i^2)=
	\frac12\,( \ul{r}_i + |x- \ul{x}_0| )(\ul{r}_i -|x- \ul{x}_0|)\ge\frac12\,\ul{r}_i \,(\ul{r}_i -|x- \ul{x}_0|).
	$$
	This implies \eqref{eq:relation u dist improved}, since $\ul{r}_i - |x- \ul{x}_0|=\de_{\Ga_0 } (x)$.
\end{proof}

\begin{rem}\label{rem:NEW on ADDITIONAL ASSUMPTION}
{\rm
As noticed in \cite[Section 4.1]{Pog3}, Definition \ref{def:interior sphere relative to cone}, returns the classical uniform interior sphere condition\footnote{Since, in general, we assume $\Om$ to be smooth (say, at least, $C^2$), it surely satisfies the classical uniform sphere conditions: see also Remark \ref{rem:final remark on regularity in the classical setting}.} in the case $\Si =\RR^N$, whereas when $\Si \subsetneq \RR^N$ it is related to how $\ol{\Ga}_0$ and $\pa\Si$ intersect; in fact, it is surely satisfied if $\ol{\Ga}_0$ and $\ol{\Ga}_1$ intersect orthogonally. We point out that also the additional assumption in \eqref{eq:condition relation dist improved} is automatically satisfied whenever $\ol{\Ga}_0$ and $\ol{\Ga}_1$ intersect orthogonally, and it is trivially satisfied whenever $\pa\Ga_0 = \varnothing$: in the last case, Lemma \ref{lem:realtion dist IMPROVED} reduces to \cite[(3.4)]{MP2}. 
}
\end{rem}

	\section{Sharp quantitative stability in terms of an \texorpdfstring{$L^2$}{L2}-pseudodistance: proof of Theorem~\ref{thm:INTRO_Serrinstab Lipschitz}}\label{sec:sharp stability}
	
	From now on, we consider $\Si$ and $\Om$ as in the setting described at the beginning of the Introduction, and in addition we assume the cone $\Si$ to be convex and that $\Si$ and $\Om$ intersect in a Lipschitz way so that $\Si\cap\Om$ is a Lipschitz domain.

	We set $k$ as in \eqref{def_k} and $z\in\mathbb{R}^N$ of the form \eqref{eq:INTRO_choice of z 1of2} such that \eqref{eq:INTRO_choice of z 2of2} holds. As already observed in the Introduction, with this choice of $z$, if we consider the harmonic function $h$ defined in \eqref{eq:INTRO def h} we have that \eqref{eq:gradienteproiettatosuspan nu} and \eqref{eq:gradienteparte ortogonale a span nu} hold true. Moreover, by direct computation, it is easy to check that $|\na^2 h|^2$ equals the Cauchy-Schwarz deficit for $\na^2 u$, that is,
		\begin{equation}\label{eq:hessiana h}
			|\na^2 h|^2 = | \na^2 u|^2- \frac{(\De u)^2}{N}	\quad \text{ in } \, \Si\cap\Om .
		\end{equation}
With the previous notations, we can now establish the following.
	\begin{lem}\label{lem:Mixed pp Poincareaigradienti}
		For $0 \le \al \le 1$ and $1 \le p < \infty$, we have that
		\begin{equation*}
			\nr \na h \nr_{L^p (\Si\cap\Om)} \le C \, \nr \de_{\Ga_0}^{\al} \, \na^2 h  \nr_{L^p(\Si\cap\Om)} ,
		\end{equation*}
		for some positive constant $C$ satisfying $C \le \La_{p,\al}(k)$, where the constant $\La_{p,\al}(k)$ is defined in \eqref{def:La p al k}.
	\end{lem}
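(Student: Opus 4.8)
The plan is to split $\na h=(h_1,\dots,h_N)$ into the block of its first $k$ components and the block of its remaining $N-k$ components, to estimate the first block by the vector Poincar\'e inequality \eqref{eq:Poincare new RN} of Theorem~\ref{thm:Poincare new in general} and each entry of the second block by the scalar weighted Poincar\'e inequality \eqref{eq:BoasStraube-poincare} of Lemma~\ref{lem:BoasStraube}, and then to reassemble. First I would dispose of the integrability bookkeeping: since $u\in W^{1,\infty}(\Si\cap\Om)$ and $q(x)=\frac12|x-z|^2$ is quadratic, on the bounded domain $\Si\cap\Om$ we have $\na h=(x-z)-\na u\in L^\infty(\Si\cap\Om)\subset L^p(\Si\cap\Om)$; moreover there is nothing to prove unless $\de_{\Ga_0}^\al\na^2 h\in L^p(\Si\cap\Om)$, which I assume from now on, and this in particular places $\vV:=(h_1,\dots,h_k,0,\dots,0)$ in $W^{1,p}_\al(\Si\cap\Om)$.

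For the first block, I would recall from \eqref{eq:gradienteproiettatosuspan nu} that $\vV$ takes values in $\mathrm{span}\{\nu(x):x\in\Ga_1\}$ and satisfies $\langle\vV,\nu\rangle=\langle\na h,\nu\rangle=0$ a.e. on $\Ga_1$; hence Theorem~\ref{thm:Poincare new in general} applies and yields
\[
\nr\vV\nr_{L^p(\Si\cap\Om)}\le\eta_{p,\al}(\Ga_1,\Si\cap\Om)^{-1}\,\nr\de_{\Ga_0}^\al D\vV\nr_{L^p(\Si\cap\Om)} .
\]
(When $k=0$ this is the trivial bound $0\le0$; when $k=N$ it is already the full statement, since there $\vV=\na h$.) For the second block, for each $i=k+1,\dots,N$ identity \eqref{eq:gradienteparte ortogonale a span nu} tells me that $h_i$ has zero mean over $\Si\cap\Om$, so Lemma~\ref{lem:BoasStraube} --- available for every $\al\in[0,1]$ because $\Si\cap\Om$ is a Lipschitz domain --- gives
\[
\nr h_i\nr_{L^p(\Si\cap\Om)}=\nr h_i-(h_i)_{\Si\cap\Om}\nr_{L^p(\Si\cap\Om)}\le\mu_{p,\al}(\Si\cap\Om)^{-1}\,\nr\de_{\Ga_0}^\al\na h_i\nr_{L^p(\Si\cap\Om)} ,
\]
where I have used $\de_{\pa(\Si\cap\Om)}\le\de_{\Ga_0}$ pointwise (valid since $\Ga_0\subseteq\pa(\Si\cap\Om)$ and $\al\ge0$) to replace the weight $\de_{\pa(\Si\cap\Om)}^\al$ appearing in \eqref{eq:BoasStraube-poincare} by the larger $\de_{\Ga_0}^\al$.

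Finally I would add up the $p$-th powers. With the componentwise convention $\nr\na v\nr_{L^p(\Si\cap\Om)}^p=\sum_{i=1}^N\nr v_i\nr_{L^p(\Si\cap\Om)}^p$ used for the weighted gradient norms in Section~\ref{sec:Poincare} (and likewise for $\nr\vV\nr_{L^p(\Si\cap\Om)}$), the first block contributes $\nr\vV\nr_{L^p(\Si\cap\Om)}^p=\sum_{i=1}^k\nr h_i\nr_{L^p(\Si\cap\Om)}^p$ and the second $\sum_{i=k+1}^N\nr h_i\nr_{L^p(\Si\cap\Om)}^p$; bounding both $\eta_{p,\al}(\Ga_1,\Si\cap\Om)^{-1}$ and $\mu_{p,\al}(\Si\cap\Om)^{-1}$ by $\La_{p,\al}(k)$ according to the three cases in \eqref{def:La p al k}, and observing that $\sum_{i=1}^k\sum_{j=1}^N\nr\de_{\Ga_0}^\al h_{ij}\nr_{L^p(\Si\cap\Om)}^p+\sum_{i=k+1}^N\sum_{j=1}^N\nr\de_{\Ga_0}^\al h_{ij}\nr_{L^p(\Si\cap\Om)}^p=\nr\de_{\Ga_0}^\al\na^2 h\nr_{L^p(\Si\cap\Om)}^p$, I would arrive at $\nr\na h\nr_{L^p(\Si\cap\Om)}^p\le\La_{p,\al}(k)^p\,\nr\de_{\Ga_0}^\al\na^2 h\nr_{L^p(\Si\cap\Om)}^p$, and taking $p$-th roots concludes with $C=\La_{p,\al}(k)$. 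This argument is essentially bookkeeping; the only genuine point --- the place where the specific structure of the problem enters --- is verifying the hypotheses of the two Poincar\'e inequalities, i.e. that $(h_1,\dots,h_k,0,\dots,0)$ is $\mathrm{span}\{\nu(x):x\in\Ga_1\}$-valued with vanishing normal trace on $\Ga_1$ and that $h_{k+1},\dots,h_N$ are mean-free, both of which are furnished by \eqref{eq:gradienteproiettatosuspan nu} and \eqref{eq:gradienteparte ortogonale a span nu}.
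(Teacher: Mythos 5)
Your proposal is correct and takes essentially the same approach as the paper: decompose $\nabla h$ into the block $(h_1,\dots,h_k,0,\dots,0)$ handled by the vector Poincar\'e inequality of Theorem~\ref{thm:Poincare new in general} (via \eqref{eq:gradienteproiettatosuspan nu}), the remaining components $h_{k+1},\dots,h_N$ handled by Lemma~\ref{lem:BoasStraube} applied to mean-free scalars (via \eqref{eq:gradienteparte ortogonale a span nu}, with the weight $\delta_{\partial(\Si\cap\Om)}$ replaced by the larger $\delta_{\Gamma_0}$), and then sum $p$-th powers and take the maximum of the two Poincar\'e constants to get $\La_{p,\al}(k)$.
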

	\begin{proof}
		%
		%
		In light of \eqref{eq:gradienteproiettatosuspan nu}, we can apply \eqref{eq:Poincare new RN} in Theorem \ref{thm:Poincare new in general} 
		with 
		$\vV:= (h_1, \dots, h_k , 0 , \dots, 0) $
		to get that
		\begin{equation}\label{eq:dimprovaPoincaregradmixed}
			\left( \sum_{i=1}^k \nr h_i \nr_{L^p (\Si\cap\Om)}^p \right)^{1/p}\le \eta_{p, \al}(\Ga_1 , \Si \cap \Om )^{-1} \, \left( \sum_{i,j=1}^k \nr \de_{\Ga_0}^{\al} \, h_{ij} \nr_{L^p (\Si\cap\Om)}^p \right)^{1/p}.
		\end{equation}
		In light of \eqref{eq:gradienteparte ortogonale a span nu}, we can apply \eqref{eq:BoasStraube-poincare} to each first partial derivative $h_i$ of $h$, $i=k+1, \dots, N$; notice that in those applications of \eqref{eq:BoasStraube-poincare} we can replace $\de_{\pa(\Si\cap\Om)}$ with $\de_{\Ga_0}$, being as $ \de_{\pa(\Si\cap\Om)}(x) \le \de_{\Ga_0}(x)$. 
		
		Raising to the power of $p$ those inequalities and \eqref{eq:dimprovaPoincaregradmixed}, and then summing up, the conclusion easily follows.
	\end{proof}

%

In what follows we show that we can obtain explicit ad hoc trace-type inequalities for $h$ and $\na h$ whenever we have at our disposal
	a positive lower bound $\lbunu$ for $|\na u|$ on $\Ga_0$, i.e.,
	\begin{equation}\label{def:lower bound unu}
		u_\nu \ge \lbunu > 0 \quad \text{on } \Ga_0 .
	\end{equation}
	
\begin{rem}\label{remark_palla int}
{\rm
	
We mention that a geometric condition that guarantees the validity of \eqref{def:lower bound unu} is the uniform interior sphere condition relative to $\Si$ of Definition \ref{def:interior sphere relative to cone}; indeed if $\Si\cap\Om$ satisfies the $\ul{r}_i$-uniform interior sphere condition relative to $\Si$, then \cite[Lemma 4.4]{Pog3} ensures that \eqref{def:lower bound unu} holds true with $\ul{m}:=\ul{r}_i$.

}
\end{rem}

	\begin{lem}[Weighted trace inequality for $h-h_{\Si\cap\Om}$ and $\na h$]
		\label{lem:weighted trace inequality ad hoc}
		For any $z \in \RR^N$ satisfying \eqref{eq:INTRO_inner product z in cone}, consider $h=q-u$ defined as in \eqref{eq:INTRO def h}.
		Let $\lbunu$ be the lower bound defined in \eqref{def:lower bound unu}.
		We have that
		\begin{equation}\label{eq:weighted trace for h-hOm}
			\nr h - h_{\Si\cap\Om}  \nr^2_{L^2(\Ga_0)} \le \frac{2}{\lbunu} \left( \frac{N}{\mu_{2,1}(\Si\cap\Om)^2}  +1 \right)   \, \nr (-u)^{ \frac{1}{2} } \na h \nr^2_{L^2(\Si\cap\Om)}  ,
		\end{equation}
		where $\mu_{2,1}(\Si\cap\Om)$ is the best constant in the Poincar\'e inequality \eqref{eq:BoasStraube-poincare} (with $p=2$, $\al=1$).
		
		Moreover, we have that
		\begin{equation}\label{eq:weighted trace for nah}
			\nr \na h \nr^2_{L^2(\Ga_0)} \le C \, \left( \nr (-u)^{ \frac{1}{2} }  \na^2 h \nr^2_{L^2(\Si\cap\Om)} + \int_{\Ga_1} u \langle  \na^2 u \na u , \nu \rangle dS_x \right) ,
		\end{equation}
		where the positive constant $C$ in \eqref{eq:weighted trace for nah} satisfies
		\begin{equation*}
			C \le \frac{2}{\lbunu} \left( N \, \La_{2,1}(k)^2  +1 \right),
		\end{equation*}
		where $\La_{2,1}(k)$ is the constant defined in \eqref{def:La p al k} (with $p=2$, $\al=1$).
	\end{lem}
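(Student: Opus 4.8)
The plan is to derive both trace inequalities from the same scheme: insert into the divergence theorem the vector field $\na u$, which is transversal to $\Ga_0$ — there $\langle\na u,\nu\rangle=u_\nu\ge\lbunu>0$ — and tangent to $\Ga_1$ — there $\langle\na u,\nu\rangle=u_\nu=0$ — so that only $\Ga_0$ contributes to the boundary integral; then rewrite the resulting interior integrals through two integrations by parts for the harmonic function $h=q-u$ (indeed $\De h=N-N=0$, so $h$ and each $h_i$ are harmonic, in particular smooth, inside $\Si\cap\Om$); and finally absorb the remaining full‑measure integrals by the Poincar\'e inequalities of Section~\ref{sec:Poincare}. All integrations by parts below are to be carried out exactly as in \cite{Pog3} — cutting off near the singular set $\cS$, which, having finite $\ell$-Minkowski content with $\ell\le N-2$, is $\cH^{N-1}$-negligible — and I shall not dwell on this point.

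For \eqref{eq:weighted trace for h-hOm} I would start from
\[
\lbunu\,\nr h-h_{\Si\cap\Om}\nr_{L^2(\Ga_0)}^2\le\int_{\Ga_0}(h-h_{\Si\cap\Om})^2u_\nu\,dS_x=\int_{\Si\cap\Om}\dv\left((h-h_{\Si\cap\Om})^2\,\na u\right)dx ,
\]
the last equality because $\langle\na u,\nu\rangle=0$ on $\Ga_1$. Expanding the divergence with $\De u=N$ produces $2\int_{\Si\cap\Om}(h-h_{\Si\cap\Om})\langle\na h,\na u\rangle\,dx+N\nr h-h_{\Si\cap\Om}\nr_{L^2(\Si\cap\Om)}^2$. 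For the first summand I integrate by parts once more: since $h$ is harmonic, $\dv((-u)\na h)=-\langle\na u,\na h\rangle$, and the boundary terms drop ($u=0$ on $\Ga_0$; $h_\nu=\langle\na q-\na u,\nu\rangle=0$ on $\Ga_1$ by \eqref{eq:INTRO_inner product z in cone} and $u_\nu=0$), giving the identity $\int_{\Si\cap\Om}(h-h_{\Si\cap\Om})\langle\na h,\na u\rangle\,dx=\int_{\Si\cap\Om}(-u)|\na h|^2\,dx$. The remaining term is controlled by Lemma~\ref{lem:BoasStraube} with $p=2$, $\al=1$ together with $\de_{\pa(\Si\cap\Om)}^2\le2(-u)$ from \eqref{eq:relation u dist general}: one gets $N\nr h-h_{\Si\cap\Om}\nr_{L^2(\Si\cap\Om)}^2\le 2N\,\mu_{2,1}(\Si\cap\Om)^{-2}\,\nr(-u)^{1/2}\na h\nr_{L^2(\Si\cap\Om)}^2$. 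Collecting the terms and dividing by $\lbunu$ yields exactly \eqref{eq:weighted trace for h-hOm}.

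For \eqref{eq:weighted trace for nah} the scheme is identical with $(h-h_{\Si\cap\Om})^2$ replaced by $|\na h|^2$: from $\lbunu\nr\na h\nr_{L^2(\Ga_0)}^2\le\int_{\Si\cap\Om}\dv(|\na h|^2\na u)\,dx$, using $\na(|\na h|^2)=2\na^2h\,\na h$ and $\De u=N$, one gets $2\int_{\Si\cap\Om}\langle\na^2h\,\na h,\na u\rangle\,dx+N\nr\na h\nr_{L^2(\Si\cap\Om)}^2$, the last term being $\le 2N\La_{2,1}(k)^2\,\nr(-u)^{1/2}\na^2h\nr_{L^2(\Si\cap\Om)}^2$ by Lemma~\ref{lem:Mixed pp Poincareaigradienti} (with $p=2$, $\al=1$) and $\de_{\Ga_0}^2\le2(-u)$ from \eqref{eq:non serve ma serve inproof reldist finer}. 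For the first term, applying $\dv((-u)\na h_i)=-\langle\na u,\na h_i\rangle$ componentwise (each $h_i$ harmonic) and summing, with all boundary contributions sitting on $\Ga_1$ (since $u=0$ on $\Ga_0$), one obtains
\[
\int_{\Si\cap\Om}\langle\na^2h\,\na h,\na u\rangle\,dx=\int_{\Si\cap\Om}(-u)|\na^2h|^2\,dx+\int_{\Ga_1}u\,\langle\na^2h\,\na h,\nu\rangle\,dS_x .
\]
Granting the boundary identity $\langle\na^2h\,\na h,\nu\rangle=\langle\na^2u\,\na u,\nu\rangle$ on $\Ga_1$ (discussed next), and using $\int_{\Ga_1}u\langle\na^2u\,\na u,\nu\rangle\ge0$ from \eqref{convexity_PT} to absorb the coefficient in front of this term, the estimate \eqref{eq:weighted trace for nah} follows with the stated constant.

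The only non‑routine ingredient is that boundary identity on $\Ga_1$, and this is where the conical structure and the special choice of $z$ are used. On $\Ga_1\subset\pa\Si$ both $u_\nu=0$ and $h_\nu=0$, so $\na u$ and $\na h$ are tangent to $\pa\Si$; differentiating $\langle\na w,\nu\rangle=0$ tangentially gives $\langle\na^2w\,T,\nu\rangle=-\mathrm{II}(T,\na w)$ for $w\in\{u,h\}$ and $T$ tangent, where $\mathrm{II}$ is the (symmetric) second fundamental form of $\pa\Si$; hence $\langle\na^2h\,\na h,\nu\rangle=-\mathrm{II}(\na h,\na h)$ and $\langle\na^2u\,\na u,\nu\rangle=-\mathrm{II}(\na u,\na u)$ on $\Ga_1$. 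Since $\na h=(x-z)-\na u$ on $\Si\cap\Om$, it then suffices to check $\mathrm{II}(x-z,\cdot)\equiv0$ on $\Ga_1$: the ruling part $\mathrm{II}(x,\cdot)$ vanishes because $\nu$ is $0$-homogeneous on the cone (so $D\nu(x)\,x=0$), while $\mathrm{II}(z,\cdot)\equiv0$ because $z$ lies in the orthogonal complement of $\mathrm{span}\{\nu(y):y\in\Ga_1\}$; indeed $\nu$ takes values in that span along $\Ga_1$, and, $\Ga_1$ being relatively open in $\pa\Si$, every tangential derivative of $\nu$ at a point of $\Ga_1$ again lies in that span, which is orthogonal to $z$, so by symmetry $\mathrm{II}(z,T)=\langle D_T\nu,z\rangle=0$ for all tangent $T$. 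Consequently $\mathrm{II}(\na h,\na h)=\mathrm{II}(\na u,\na u)$ on $\Ga_1$, as needed. I expect this differential‑geometric identification on $\Ga_1$, together with the (routine but delicate) justification of the integrations by parts near $\cS$, to be the only genuine obstacles; everything else is bookkeeping of constants.
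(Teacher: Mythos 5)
Your proof is correct and follows the paper's strategy exactly: bound the boundary traces by $\lbunu^{-1}\int_{\Ga_0}(\cdot)\,u_\nu\,dS$, pass to interior integrals via the divergence theorem and two integrations by parts for the harmonic $h$ (using $u_\nu=0$ on $\Ga_1$, $u=0$ on $\Ga_0$, $h_\nu=0$ on $\Ga_1$), and control the resulting full-measure $L^2$ norms by the weighted Poincar\'e inequalities of Section~\ref{sec:Poincare} together with $\de_{\Ga_0}^2\le 2(-u)$ from Lemma~\ref{lem:relation u dist general}. The only difference is that the paper simply cites the two Rellich--Pohozaev-type identities \cite[(5.12) and (5.13)]{Pog3}, whereas you re-derive them ab initio, including a second-fundamental-form argument for the boundary identification $\langle\na^2 h\,\na h,\nu\rangle=\langle\na^2 u\,\na u,\nu\rangle$ on $\Ga_1$ which that reference obtains more directly by expanding $\na^2 h\,\na h=(I-\na^2 u)\big((x-z)-\na u\big)$ and checking $\langle\na^2 u\,(x-z),\nu\rangle=0$ on $\Ga_1$.
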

	\begin{rem}
		{\rm Note that from the convexity of the cone, the error term 
		$$
		\int_{\Ga_1} u \langle  \na^2 u \na u , \nu \rangle dS_x
		$$ 
		is non-negative (see e.g. \cite[Formula (3.9)]{PT} for a proof). Such an error term will be re-absorbed later, as it appears in the left-hand side of the integral identity for Serrin's problem \eqref{idwps}.
	}
	\end{rem}
	\begin{proof}[Proof of Lemm \ref{lem:weighted trace inequality ad hoc}]
		Combining \eqref{eq:BoasStraube-poincare} (used here with $G:=\Si\cap\Om$, $p:=2$, $\al:=1$) and \eqref{eq:relation u dist general} we find that
		$$
		\int_{\Si\cap\Om} (h - h_{\Si\cap\Om})^2 dx \le 2 \, \mu_{2,1}(\Si\cap\Om)^{-2} \, \int_{\Si\cap\Om} (-u) |\na h |^2 dx .
		$$
		Putting together the last inequality, \cite[(5.12)]{Pog3}, 
		and \eqref{def:lower bound unu}, \eqref{eq:weighted trace for h-hOm} easily follows.

		Let us now prove \eqref{eq:weighted trace for nah}.
Combining Lemma \ref{lem:Mixed pp Poincareaigradienti} (used here with $p:=2$, $\al:=1$) and \eqref{eq:relation u dist general} we find that
		$$
		\int_{\Si\cap\Om} |\na h|^2 dx \le 2 \, \La_{2,1}(k)^2 \, \int_{\Si\cap\Om} (-u) |\na^2 h|^2 dx ,
		$$
		where $\La_{2,1}(k)$ is the constant defined in \eqref{def:La p al k} (with $p:=2$ and $\al:=1$).
		%
		The conclusion easily follows putting together the last inequality, \cite[(5.13)]{Pog3}, and \eqref{def:lower bound unu}.
	\end{proof}

The last Lemma that we need in order to prove Theorem \ref{thm:INTRO_Serrinstab Lipschitz} is the following
	
\begin{lem}
		Let $\lbunu$ be the lower bound defined in \eqref{def:lower bound unu}. We have that
		\begin{equation}\label{eq:Serrin Prestab nah}
			\nr \na h \nr_{L^2(\Ga_0)} \le \frac{C}{2} \, \nr u_\nu^2 -R^2 \nr_{L^2(\Ga_0)} ,
		\end{equation}
		where $C$ is the same constant appearing in \eqref{eq:weighted trace for nah}.
	\end{lem}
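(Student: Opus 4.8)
The plan is to combine the weighted trace inequality \eqref{eq:weighted trace for nah} with the fundamental identity \eqref{idwps}. First I would observe that, by \eqref{eq:hessiana h} and $\De u = N$, the weighted Hessian term appearing in \eqref{eq:weighted trace for nah} is exactly the volume integral on the left-hand side of \eqref{idwps}:
\[
\nr (-u)^{1/2} \na^2 h \nr^2_{L^2(\Si\cap\Om)} = \int_{\Si\cap\Om} (-u) \left\{ |\na^2 u|^2 - \frac{(\De u)^2}{N} \right\} dx .
\]
Since the point $z$ chosen in \eqref{eq:INTRO_choice of z 1of2} satisfies \eqref{eq:INTRO_inner product z in cone}, identity \eqref{idwps} applies and gives
\[
\nr (-u)^{1/2} \na^2 h \nr^2_{L^2(\Si\cap\Om)} + \int_{\Ga_1} u\, \langle \na^2 u\, \na u , \nu \rangle \, dS_x = \frac12 \int_{\Ga_0} (u_\nu^2 - R^2)\bigl( u_\nu - \langle x-z,\nu\rangle \bigr) \, dS_x .
\]
Substituting this into \eqref{eq:weighted trace for nah} yields
\[
\nr \na h \nr^2_{L^2(\Ga_0)} \le \frac{C}{2} \int_{\Ga_0} (u_\nu^2 - R^2)\bigl( u_\nu - \langle x-z,\nu\rangle \bigr) \, dS_x ,
\]
with $C$ the constant of \eqref{eq:weighted trace for nah}.

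Next I would rewrite the boundary integrand through $\na h$. On $\Ga_0$ one has $u=0$, so the tangential part of $\na u$ vanishes and $\na u = u_\nu\,\nu$ there; since $\na h = (x-z) - \na u$, this gives $\langle \na h , \nu \rangle = \langle x-z,\nu\rangle - u_\nu$ on $\Ga_0$, i.e. $u_\nu - \langle x-z,\nu\rangle = -\langle \na h , \nu\rangle$. Hence
\[
\nr \na h \nr^2_{L^2(\Ga_0)} \le -\frac{C}{2} \int_{\Ga_0} (u_\nu^2 - R^2)\,\langle \na h , \nu \rangle \, dS_x \le \frac{C}{2} \int_{\Ga_0} |u_\nu^2 - R^2|\,|\na h| \, dS_x ,
\]
using $|\langle \na h , \nu\rangle| \le |\na h|$.

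Finally, the Cauchy--Schwarz inequality on $\Ga_0$ gives
\[
\nr \na h \nr^2_{L^2(\Ga_0)} \le \frac{C}{2}\, \nr u_\nu^2 - R^2 \nr_{L^2(\Ga_0)} \, \nr \na h \nr_{L^2(\Ga_0)} ,
\]
and, dividing by $\nr \na h \nr_{L^2(\Ga_0)}$ when it is nonzero (the inequality \eqref{eq:Serrin Prestab nah} being trivial otherwise), one obtains \eqref{eq:Serrin Prestab nah}. I do not expect any genuine obstacle here; the only points deserving a word of care are verifying that $z$ is admissible in \eqref{idwps} — which is precisely \eqref{eq:INTRO_inner product z in cone}, already checked for the choice \eqref{eq:INTRO_choice of z 1of2} — and the elementary boundary identity $\na u = u_\nu\,\nu$ on $\Ga_0$, which uses only $u|_{\Ga_0}\equiv 0$.
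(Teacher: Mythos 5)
Your proof is correct and follows essentially the same route as the paper: combine the weighted trace inequality \eqref{eq:weighted trace for nah} with \eqref{eq:hessiana h} and the identity \eqref{idwps}, recognize the boundary factor $u_\nu-\langle x-z,\nu\rangle$ as $-h_\nu$, and conclude via H\"older and cancellation of a factor of $\nr\na h\nr_{L^2(\Ga_0)}$. Your care with the sign (and the observation $|h_\nu|\le|\na h|$) is exactly what is implicit in the paper's one-line application of H\"older's inequality in \eqref{eq:RHSIDE Serrin}.
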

	\begin{proof}
		By putting together \eqref{eq:weighted trace for nah}, \eqref{eq:hessiana h},
		%
		%
		and \eqref{idwps}, we find that
		\begin{equation*}
			\nr \na h \nr^2_{L^2(\Ga_0)} \le \frac{C}{2} \, \int_{\Ga_0} \left( u_\nu^2 -R^2 \right) \, h_\nu \, dS_x ,
		\end{equation*}
		and, since by using H\"older's inequality we have that
		\begin{equation}\label{eq:RHSIDE Serrin}
			\int_{\Ga_0} \left( u_\nu^2 -R^2 \right) \, h_\nu \, dS_x 
			\le  \nr u_\nu^2 -R^2 \nr_{L^2(\Ga_0)} \nr h_\nu \nr_{L^2(\Ga_0)}
			\le \nr u_\nu^2 -R^2 \nr_{L^2(\Ga_0)} \nr \na h \nr_{L^2(\Ga_0)} ,
		\end{equation}
		the conclusion easily follows.
	\end{proof}

	We are now in position to prove Theorem \ref{thm:INTRO_Serrinstab Lipschitz}. 
	
	\begin{proof}[Proof of Theorem \ref{thm:INTRO_Serrinstab Lipschitz}]
		By using the triangle inequality, we compute:
		\begin{equation}\label{eq:triangle inequality for Serrin}
			\begin{split}
				\nr |x-z| - R \nr_{L^2(\Ga_0)}
				& \le \nr |x-z| - | \na u | \nr_{L^2(\Ga_0)} + \nr | \na u| - R  \nr_{L^2(\Ga_0)}
				\\
				& \le \nr  (x-z) -  \na u  \nr_{L^2(\Ga_0)} + \nr | \na u| - R  \nr_{L^2(\Ga_0)}
				\\
				& = \nr  \na h \nr_{L^2(\Ga_0)} + \nr u_\nu - R  \nr_{L^2(\Ga_0)} .
			\end{split}
		\end{equation}
		Estimating the first summand by using \eqref{eq:Serrin Prestab nah}, and the second summand by using that
		$$
		|u_\nu - R| \le \frac{1}{ \lbunu + R} \, | u_\nu^2 - R^2| \le \frac{1}{2 \, \lbunu } \, | u_\nu^2 - R^2| ,
		$$
		the conclusion easily follows. In the last inequality we used that $R=(u_\nu)_{\Ga_0} \ge \ul{m}$.
	\end{proof}
	
	\section{Stability estimates in terms of \texorpdfstring{$\rho_e - \rho_i$: proof of Theorem \ref{thm:Serrin general stability rhoe rhoi}}{rhoe -rhoi}}\label{sec:stability rhoe-rhoi}

	
In this section we use the same notations as in Section \ref{sec:sharp stability}.

	\begin{thm}

		\label{thm:Serrin-W22-stability in cones}
		Let $\Si\cap\Om$ be a bounded domain satisfying the $(\te,\ca)$-uniform interior cone condition.
		Let $z \in \RR^N$ be the point chosen as in \eqref{eq:INTRO_choice of z 1of2}-\eqref{eq:INTRO_choice of z 2of2}.

		Then, there exists an explicit positive constant $C$ such that
		$$
		\label{ineq:weighted-diff-radii-hessian}
		\rho_e - \rho_i \le  
		C \,
		\begin{cases}
			\displaystyle \nr \de_{\Ga_0} \na^2 h \nr_{L^2 (\Si\cap\Om)}  \max \left[ \log \left(  \frac{  e \, \nr \na h \nr_{L^\infty (\Si\cap\Om)} }{  \nr \de_{\Ga_0} \na^2 h \nr_{L^2(\Si\cap\Om)} } \right) , 1\right], \ &\mbox{for $N=2$};
			\vspace{3pt}
			\\
			\nr \na h \nr_{L^\infty (\Si\cap\Om)}^{ \frac{N-2}{N} }\,  \nr \de_{\Ga_0} \na^2 h \nr_{L^2 (\Si\cap\Om)}^{ \frac{2}{N}} ,   \ &\mbox{for $N\ge 3$.}
		\end{cases}
		$$
		The constant $C$ can be explicitly estimated only in terms of $N, \ca , \te$, the constant $\eta_{2, 1}(\Ga_1,\Si\cap\Om)$ from Theorem \ref{thm:Poincare new in general}, and the diameter $d_{\Si\cap\Om}$.
	\end{thm}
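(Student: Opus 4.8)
The plan is to bound $\rho_e-\rho_i$ by the oscillation of the harmonic function $h=q-u$ on $\ol\Ga_0$, and then to bound that oscillation by a Morrey--Campanato argument (for $N\ge 3$) resp.\ a BMO/telescoping argument (for $N=2$) applied to $h$, using the Poincar\'e inequality of Lemma~\ref{lem:Mixed pp Poincareaigradienti} to convert $\nr\na h\nr_{L^2(\Si\cap\Om)}$ into $\nr\de_{\Ga_0}\na^2 h\nr_{L^2(\Si\cap\Om)}$. \emph{Step 1 (reduction to $\operatorname{osc}_{\ol{\Si\cap\Om}}h$).} Since $\De q=N=\De u$, $h$ is harmonic; since $u\le 0$ in $\Si\cap\Om$ and $u=0$ on $\Ga_0$, we have $h=\tfrac12|x-z|^2$ on $\Ga_0$ and $h\ge\tfrac12|x-z|^2\ge 0$ everywhere. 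Taking $x_e,x_i\in\ol\Ga_0$ with $|x_e-z|=\rho_e$ and $|x_i-z|=\rho_i$, we get $h(x_e)=\max_{\ol\Ga_0}h$, $h(x_i)=\min_{\ol\Ga_0}h$, so $\tfrac12(\rho_e^2-\rho_i^2)=h(x_e)-h(x_i)\le\operatorname{osc}_{\ol\Ga_0}h$. Moreover \eqref{eq:non serve ma serve inproof reldist finer} gives $h\ge -u\ge\tfrac12\de_{\Ga_0}^2\ge\tfrac12\de_{\pa(\Si\cap\Om)}^2$, and the $(\te,\ca)$-uniform interior cone condition forces an interior ball of radius bounded below by a function of $N,\te,\ca$ inside $\Si\cap\Om$; hence $\int_{\Si\cap\Om}\de_{\Ga_0}^2$ is bounded below by a geometric constant while $|\Si\cap\Om|\le\omega_N d_{\Si\cap\Om}^N$, so $h_{\Si\cap\Om}\ge c_1=c_1(N,\te,\ca,d_{\Si\cap\Om})>0$. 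If $\operatorname{osc}_{\ol{\Si\cap\Om}}h\ge c_1/2$ we conclude trivially from $\rho_e-\rho_i\le d_{\Si\cap\Om}$; otherwise $h(x_e),h(x_i)\ge h_{\Si\cap\Om}-\operatorname{osc}_{\ol{\Si\cap\Om}}h\ge c_1/2$, hence $\rho_e+\rho_i\ge 2\sqrt{c_1}$ and $\rho_e-\rho_i=\tfrac{\rho_e^2-\rho_i^2}{\rho_e+\rho_i}\le c_1^{-1/2}\operatorname{osc}_{\ol{\Si\cap\Om}}h$. In all cases $\rho_e-\rho_i\le C(N,\te,\ca,d_{\Si\cap\Om})\operatorname{osc}_{\ol{\Si\cap\Om}}h$.

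\emph{Step 2 (oscillation of a harmonic function on a John domain).} By Remark~\ref{rem:stime mu HS}(iii) the $(\te,\ca)$-uniform interior cone condition makes $\Si\cap\Om$ a $b_0$-John domain with $b_0=b_0(\te,\ca,d_{\Si\cap\Om})$. The key estimate is: if $w$ is harmonic in a bounded $b_0$-John domain $G\subset\RR^N$ with $\na w\in L^\infty(G)$, then $w$ extends continuously to $\ol G$ and
\[
\operatorname{osc}_{\ol G}w\le C(N,b_0)
\begin{cases}
\nr\na w\nr_{L^2(G)}\,\max\!\big[\log\big(e\,\nr\na w\nr_{L^\infty(G)}\,d_G/\nr\na w\nr_{L^2(G)}\big),1\big], & N=2,\\[4pt]
\nr\na w\nr_{L^\infty(G)}^{1-2/N}\,\nr\na w\nr_{L^2(G)}^{2/N}, & N\ge 3.
\end{cases}
\]
For $N\ge 3$ one uses the John-domain Riesz representation $|w(x)-w_G|\lesssim_{b_0}\int_G|x-y|^{1-N}|\na w(y)|\,dy$, splits the kernel at radius $t$, bounds the contribution of $\{|x-y|>t\}$ by $c_N\,\nr\na w\nr_{L^2(G)}\,t^{1-N/2}$ via Cauchy--Schwarz and that of $\{|x-y|\le t\}$ by $c_N\,\nr\na w\nr_{L^\infty(G)}\,t$, and optimizes in $t\sim(\nr\na w\nr_{L^2(G)}/\nr\na w\nr_{L^\infty(G)})^{2/N}$. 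For $N=2$ one uses instead the John-domain bound $\nr w\nr_{\mathrm{BMO}(G)}\lesssim_{b_0}\nr\na w\nr_{L^2(G)}$ together with the elementary fact that, for every ball $B$, the mean of $|w-w_{B\cap G}|$ over $B\cap G$ is $\le\min(\nr w\nr_{\mathrm{BMO}(G)},\,c\,\nr\na w\nr_{L^\infty(G)}\operatorname{diam}B)$, followed by a dyadic telescoping in the radius; the $\min$ is precisely what yields the full logarithmic factor (rather than a $\sqrt{\log}$). This is the analogue in the present mixed-boundary setting of the harmonic-function estimates of \cite{MP3} ($N\neq 3$) and \cite{MP6} ($N=3$).

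\emph{Step 3 (conclusion).} Apply Step 2 with $w=h\in W^{1,\infty}(\Si\cap\Om)$ (by \eqref{reg_u}), and then Lemma~\ref{lem:Mixed pp Poincareaigradienti} with $p=2$, $\al=1$ to obtain $\nr\na h\nr_{L^2(\Si\cap\Om)}\le\La_{2,1}(k)\,\nr\de_{\Ga_0}\na^2 h\nr_{L^2(\Si\cap\Om)}$. Substituting this and Step 1 gives the asserted estimate, the bracketed argument in the $N=2$ case reducing to $\log\big(e\,\nr\na h\nr_{L^\infty(\Si\cap\Om)}/\nr\de_{\Ga_0}\na^2 h\nr_{L^2(\Si\cap\Om)}\big)$ after absorbing the fixed factors $d_{\Si\cap\Om}$ and $\La_{2,1}(k)$ into $C$ (the map $t\mapsto t\max[\log(a/t),1]$ being, up to a multiplicative constant, monotone on the relevant bounded range of $t$). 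Finally $\La_{2,1}(k)$ is controlled by $\eta_{2,1}(\Ga_1,\Si\cap\Om)$ together with $\mu_{2,1}(\Si\cap\Om)$, and the latter is bounded in terms of $N$ and $b_0=b_0(\te,\ca,d_{\Si\cap\Om})$ by Remark~\ref{rem:stime mu HS}; hence the final $C$ depends only on $N,\ca,\te,\eta_{2,1}(\Ga_1,\Si\cap\Om)$ and $d_{\Si\cap\Om}$, as claimed. \emph{The main obstacle} is Step 2: establishing the oscillation estimate with an \emph{explicit} dependence of the constant on the John parameter $b_0$ near a possibly irregular boundary, and in particular recovering the sharp logarithmic profile in the borderline two-dimensional case, which forces the BMO route rather than a crude Riesz-potential bound; the geometric bookkeeping in Step 1 (the positive lower bound for $h_{\Si\cap\Om}$) is routine but also relies on the cone condition.
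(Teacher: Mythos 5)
Your proof is correct and follows the same strategy as the paper: reduce $\rho_e - \rho_i$ to an $L^\infty$--$L^2$ oscillation bound on $h$ (with the sharp logarithmic profile in the borderline case $N=2$ and the Riesz-potential interpolation for $N\ge 3$), then invoke Lemma~\ref{lem:Mixed pp Poincareaigradienti} with $p=2$, $\al=1$ to convert $\nr\na h\nr_{L^2(\Si\cap\Om)}$ into $\nr\de_{\Ga_0}\na^2 h\nr_{L^2(\Si\cap\Om)}$. The only structural difference is that your Steps 1 and 2 re-derive from scratch exactly what the paper obtains by citing \cite[Lemma 6.4]{Pog3}, namely the John-domain oscillation estimate for $h$ in terms of $\nr\na h\nr_{L^p}$ and $\nr\na h\nr_{L^\infty}$.
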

	\begin{proof}
		In what follows, we use the letter $C$ to denote a constant whose value may change line by line. All the constants $C$ can be explicitly computed (by following the steps of the proof) and estimated in terms of the parameters declared in the statement only (by recalling Remark \ref{rem:stime mu HS}).

		(i) Let $N=2$.
		We use \cite[Lemma 6.4]{Pog3} with $p:=N=2$ and get:
		$$
		\rho_e - \rho_i  \le 
		C \,\max \left\lbrace \nr \na h \nr_{L^2 (\Si\cap\Om)}  \log \left(  \frac{  e \, \nr \na h \nr_{L^\infty (\Si\cap \Om)} }{  \nr  \na h \nr_{L^2 (\Si\cap\Om)} } \right) , \nr  \na h \nr_{L^2 (\Si\cap \Om)} \right\rbrace.   
		$$
		Next, Lemma \ref{lem:Mixed pp Poincareaigradienti} with $p:=2$ and $\al:=1$ gives:
		$$
		\nr \na h \nr_{L^2 (\Si\cap\Om)} \le C  \, \nr \de_{\Ga_0} \na^2 h \nr_{L^2 (\Si\cap\Om)}.
		$$
		Thus, the desired conclusion ensues by invoking the monotonicity of the function 
		$t\mapsto t \max \{\log( A/t), 1\}$ for every $A>0$.
		
		(ii) When $N\ge 3$, we can use \cite[Lemma 6.4]{Pog3} with $p:=2$ and put it together with Lemma \ref{lem:Mixed pp Poincareaigradienti} with $p:=2$ and $\al:=1$.
	\end{proof}
	
	By coupling the previous theorem with a suitable upper bound for $\nr \na h \nr_{L^{\infty}(\Si\cap\Om)}$, we easily obtain the following.
	
	\begin{cor}
		\label{cor:Serrin-W22-stability in cones with upper bound}
		Let $\Si\cap\Om$ be a bounded domain satisfying the $(\te,\ca)$-uniform interior cone condition.
		Let $z \in \RR^N$ be the point chosen as in \eqref{eq:INTRO_choice of z 1of2}-\eqref{eq:INTRO_choice of z 2of2}.

		Then, there exists an explicit positive constant $C$ such that
		\begin{equation*}
			\rho_e - \rho_i \le  
			C \,
			\begin{cases}
				\displaystyle \nr \de_{\Ga_0} \na^2 h \nr_{L^2 (\Si\cap\Om)}  \max \left[ \log \left(  \frac{  e }{  \nr \de_{\Ga_0} \na^2 h \nr_{L^2(\Si\cap\Om)} } \right) , 1\right], \ &\mbox{for $N=2$};
				\vspace{3pt}
				\\
				\nr \de_{\Ga_0} \na^2 h \nr_{L^2 (\Si\cap\Om)}^{ \frac{2}{N}} ,   \ &\mbox{for $N\ge 3$.}
			\end{cases}
		\end{equation*}
		The constant $C$ can be explicitly estimated only in terms of $N, \ca , \te$, the constant $\eta_{2, 1}(\Ga_1,\Si\cap\Om)$ from Theorem \ref{thm:Poincare new in general}, the diameter $d_{\Si\cap\Om}$, and $\nr \na u \nr_{L^{\infty}(\Si\cap\Om)}$.
	\end{cor}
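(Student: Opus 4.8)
The plan is to feed the a priori bound $\nr \na h\nr_{L^\infty(\Si\cap\Om)}\le M$, with $M$ depending only on the parameters allowed in the statement, into Theorem~\ref{thm:Serrin-W22-stability in cones}, and then to absorb the resulting prefactor into the constant $C$.

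The first step is the pointwise bound for $\na h$. By \eqref{eq:INTRO def h} we have $\na h(x)=(x-z)-\na u(x)$ on $\Si\cap\Om$, hence
$$
\nr \na h\nr_{L^\infty(\Si\cap\Om)}\le \sup_{x\in\Si\cap\Om}|x-z|+\nr \na u\nr_{L^\infty(\Si\cap\Om)}=:M .
$$
Here $\sup_{\Si\cap\Om}|x-z|$ is elementary to estimate in terms of $N$, $d_{\Si\cap\Om}$ and $\nr\na u\nr_{L^\infty(\Si\cap\Om)}$: by the explicit choice \eqref{eq:INTRO_choice of z 2of2} and Jensen's inequality each coordinate $z_i$ either vanishes (for $i\le k$) or lies in the interval $\big[\min_{\Si\cap\Om}(x_i-u_i),\ \max_{\Si\cap\Om}(x_i-u_i)\big]$ (for $i>k$), and, using also that $\ol{\Si\cap\Om}$ contains the vertex of $\Si$ so that all its points are at distance $\le d_{\Si\cap\Om}$ from the origin, one gets $\sup_{\Si\cap\Om}|x-z|\le C(N)\,\big(d_{\Si\cap\Om}+\nr\na u\nr_{L^\infty(\Si\cap\Om)}\big)$. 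In particular $M=M(N,d_{\Si\cap\Om},\nr\na u\nr_{L^\infty(\Si\cap\Om)})$, and we may assume $M\ge 1$.

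The second step is to substitute $\nr\na h\nr_{L^\infty(\Si\cap\Om)}\le M$ into Theorem~\ref{thm:Serrin-W22-stability in cones}. When $N\ge 3$ nothing else is needed: the factor $\nr\na h\nr_{L^\infty(\Si\cap\Om)}^{(N-2)/N}\le M^{(N-2)/N}\le M$ is absorbed into $C$. When $N=2$, writing $t:=\nr\de_{\Ga_0}\na^2 h\nr_{L^2(\Si\cap\Om)}$ and using that $\log$ and $r\mapsto\max\{r,1\}$ are nondecreasing, the right-hand side of Theorem~\ref{thm:Serrin-W22-stability in cones} is at most $C\,t\,\max\{\log(eM/t),1\}$; the conclusion then follows from the elementary inequality
$$
t\,\max\!\Big\{\log\tfrac{eM}{t},\,1\Big\}\ \le\ (1+\log M)\,t\,\max\!\Big\{\log\tfrac{e}{t},\,1\Big\}\qquad\text{for all }t>0,\ M\ge1,
$$
which is proved by distinguishing the cases $\log(e/t)\ge1$ and $\log(e/t)<1$, upon absorbing the factor $1+\log M$ into $C$. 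Keeping track of the constants via the proof of Theorem~\ref{thm:Serrin-W22-stability in cones} and Remark~\ref{rem:stime mu HS} yields the stated dependence of $C$ on $N$, $\ca$, $\te$, $\eta_{2,1}(\Ga_1,\Si\cap\Om)$, $d_{\Si\cap\Om}$ and $\nr\na u\nr_{L^\infty(\Si\cap\Om)}$.

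There is no genuine obstacle here: the statement is essentially "insert the a priori gradient bound and re-absorb". The only mildly delicate points are the explicit estimate of $\sup_{\Si\cap\Om}|x-z|$ (routine, once one recalls that $z$ is a barycentric-type point assembled from $x$ and $\na u$) and, for $N=2$, the logarithmic bookkeeping needed to move the $\log M$ term into the constant while keeping $\log(e/t)$ inside the estimate.
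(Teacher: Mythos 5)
Your overall strategy (insert an a~priori bound on $\nr\na h\nr_{L^\infty(\Si\cap\Om)}$ into Theorem~\ref{thm:Serrin-W22-stability in cones} and absorb it into $C$) is the right one, and the logarithmic bookkeeping for $N=2$ via the inequality $t\max\{\log(eM/t),1\}\le(1+\log M)\,t\max\{\log(e/t),1\}$ is correct. However, there is a genuine gap in the way you obtain the $L^\infty$ bound for $\na h$. You assert that ``$\ol{\Si\cap\Om}$ contains the vertex of $\Si$,'' and use this to deduce $\sup_{\Si\cap\Om}|x|\le d_{\Si\cap\Om}$ and hence $\sup_{\Si\cap\Om}|x-z|\le C(N)\bigl(d_{\Si\cap\Om}+\nr\na u\nr_{L^\infty(\Si\cap\Om)}\bigr)$. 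But the vertex being in $\ol{\Si\cap\Om}$ is nowhere assumed (for example $\Si$ a half-space and $\Om$ a ball touching $\pa\Si$ far from the origin satisfies all hypotheses, yet $0\notin\ol{\Si\cap\Om}$). For the coordinates $i>k$ your bound on $|x_i-z_i|$ is fine and needs no vertex assumption, but for $i\le k$ you have $z_i=0$ and hence need $\sup_{\Si\cap\Om}|x_i|$, and this quantity is \emph{not} controlled by $d_{\Si\cap\Om}$ alone: when the normals $\nu$ on $\Ga_1$ spanning $\mathbf e_1,\dots,\mathbf e_k$ are nearly degenerate, the first $k$ coordinates of points of $\Si\cap\Om$ can be very large compared to the diameter. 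This missing dependence is exactly what the paper's proof supplies: it obtains the $L^\infty$ bound via \eqref{eq:dimprovaPoincaregradmixed} (with $\al=1$, $p=2$), which brings in the Poincar\'e constant $\eta_{2,1}(\Ga_1,\Si\cap\Om)$ and also the quantity $\nr\de_{\Ga_0}\na^2 h\nr_{L^2(\Si\cap\Om)}$ (replacing the role of $\eta_{2,0}$ and $\nr\na^2 h\nr_{L^2}$ in \cite[(6.8)]{Pog3}). Concretely, one controls $(h_i)_{\Si\cap\Om}$ for $i\le k$ through the Poincar\'e inequality, and the resulting extra term $\nr\de_{\Ga_0}\na^2 h\nr_{L^2(\Si\cap\Om)}$ in the $L^\infty$ bound must then be reabsorbed, e.g.\ by distinguishing the cases $\nr\de_{\Ga_0}\na^2 h\nr_{L^2}\le1$ and $\nr\de_{\Ga_0}\na^2 h\nr_{L^2}>1$ (in the latter the conclusion follows trivially since $\rho_e-\rho_i\le d_{\Si\cap\Om}$). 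So while the target inequality and the elementary manipulation of the logarithm are correct, the step bounding $\sup|x-z|$ is incomplete as written and must be replaced by the Poincar\'e argument that the paper uses.
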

	\begin{proof}
		The proof is analogous to that of \cite[Corollary 6.7]{Pog3} with the only difference that to obtain the upper bound for $\nr \na h \nr_{L^\infty(\Si\cap\Om)}$ we now use \eqref{eq:dimprovaPoincaregradmixed} with $ \al:=1$ and $p:=2$ (instead of $\al:=0$ and $p:=2$), hence obtaining \cite[(6.8)]{Pog3} with $\eta_{2, 0}(\Ga_1,\Si\cap\Om)$ and $\nr \na^2 h \nr_{L^2 (\Si\cap\Om)}$ replaced by $\eta_{2, 1}(\Ga_1,\Si\cap\Om)$ and $\nr \de_{\Ga_0} \na^2 h \nr_{L^2 (\Si\cap\Om)}$.
	\end{proof}
	
	We are now in position to prove Theorem \ref{thm:Serrin general stability rhoe rhoi}.

	\begin{proof}[Proof of Theorem \ref{thm:Serrin general stability rhoe rhoi}]
		By putting together \eqref{eq:hessiana h} and \eqref{idwps} we find that
		\begin{equation*}
			\int_{\Si\cap\Om} (-u) | \na^2 h |^2 \, dx + \int_{\Ga_1} u \,  \langle \na^2 u \na u, \nu \rangle \, dS_x =
			\frac{1}{2}\,\int_{\Ga_0} \left( u_\nu^2- R^2\right) h_\nu \,dS_x ,
		\end{equation*}
		Discarding the second summand in the left-hand side (which is non-negative) and using \eqref{eq:RHSIDE Serrin} and \eqref{eq:Serrin Prestab nah} to estimate the right-hand side, we obtain that
		\begin{equation}\label{eq:stima prefinal Serrin rhoerhoi}
			\int_{\Si\cap\Om} (-u) | \na^2 h |^2 \, dx \le C \, \nr u_\nu^2 - R^2 \nr^2_{L^2(\Ga_0)} .
		\end{equation}
		The conclusion follows by putting together the last inequality, \eqref{eq:relation u dist general} and Corollary \ref{cor:Serrin-W22-stability in cones with upper bound}.
	\end{proof}
	
	The stability profile obtained in Theorem \ref{thm:Serrin general stability rhoe rhoi} can be improved whenever \eqref{eq:relation u dist general} can be replaced with the finer estimate \eqref{eq:relation u dist improved} relating $u$ and $\de_{\Ga_0}$. 
	%
	%
	To this aim, we will use the following strengthened version of Lemma \ref{lem:Mixed pp Poincareaigradienti} in the case where $p(1 -\al)<N$.
	\begin{lem}\label{lem:Mixed Strengthened Poincareaigradienti}
		Let $z \in \RR^N$ be the point chosen as in \eqref{eq:INTRO_choice of z 1of2}-\eqref{eq:INTRO_choice of z 2of2}.
		
		If $r, p, \al$ are as in \eqref{eq:r p al in Hurri},
		then we have that
		\begin{equation*}
			\nr \na h \nr_{L^{r}( \Si\cap\Om )} \le C \, \nr \de_{\Ga_0}^{\al} \, \na^2 h  \nr_{L^p ( \Si\cap\Om)} ,
		\end{equation*}
		for some positive constant $C$ satisfying $C \le \La_{r,p,\al} (k)$, where we have set
		\begin{equation}\label{eq:NEWCONSTANT La_rpal (k)}
		\La_{r,p,\al} (k) := 
		\begin{cases}
			\mu_{r ,p, \al}(\Si\cap\Om)^{-1} \quad & \text{if } k=0
			\\
			\eta_{r , p, \al} (\Ga_1,\Si \cap \Om )^{-1} \quad & \text{if } k=N
			\\
			\max\left[ \mu_{r , p,\al}(\Si\cap\Om)^{-1} , \, \eta_{ r , p, \al} (\Ga_1,\Si \cap \Om )^{-1}  \right] \quad & \text{if } 1 \le k \le N-1 ,
		\end{cases}
		\end{equation}
		where $\mu_{r, p,\al}(\Si\cap\Om)$ and $\eta_{r, p, \al} (\Ga_1,\Si \cap \Om )$ are those in \eqref{eq:John-Hurri-poincare} and Theorem \ref{thm:Strengthened Poincare new RN}.
	\end{lem}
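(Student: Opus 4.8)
The plan is to repeat the proof of Lemma~\ref{lem:Mixed pp Poincareaigradienti} almost verbatim, using the strengthened Poincar\'e-type inequalities available when $r,p,\al$ satisfy~\eqref{eq:r p al in Hurri}, namely inequality~\eqref{eq:John-Hurri-poincare} of Lemma~\ref{lem:Hurri} in place of~\eqref{eq:BoasStraube-poincare}, and inequality~\eqref{eq:Strengthened Poincare new RN} of Theorem~\ref{thm:Strengthened Poincare new RN} in place of~\eqref{eq:Poincare new RN}. As in Section~\ref{sec:sharp stability}, for the point $z$ chosen in~\eqref{eq:INTRO_choice of z 1of2}--\eqref{eq:INTRO_choice of z 2of2} the harmonic function $h=q-u$ satisfies~\eqref{eq:gradienteproiettatosuspan nu} and~\eqref{eq:gradienteparte ortogonale a span nu}; accordingly, we split $\na h$ into the vector field $\vV:=(h_1,\dots,h_k,0,\dots,0)$, which takes values in $\mathrm{span}\{\nu(x):x\in\Ga_1\}$ and satisfies $\langle\vV,\nu\rangle=0$ a.e.\ on $\Ga_1$, and the remaining partial derivatives $h_{k+1},\dots,h_N$, each of which has zero mean in $\Si\cap\Om$. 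We may of course assume $\de_{\Ga_0}^\al\na^2 h\in L^p(\Si\cap\Om)$ (otherwise the right-hand side is infinite), so that all the membership conditions needed below ($\vV\in W^{1,p}_\al(\Si\cap\Om)$, and $\de_{\pa(\Si\cap\Om)}^\al\na h_i\in L^p(\Si\cap\Om)$) are in force.

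First, applying~\eqref{eq:Strengthened Poincare new RN} of Theorem~\ref{thm:Strengthened Poincare new RN} to $\vV$ (legitimate by~\eqref{eq:gradienteproiettatosuspan nu}) and noting that the nonzero components of $D\vV$ are precisely the $h_{ij}$ with $1\le i\le k$, we obtain
\begin{equation*}
	\left( \sum_{i=1}^k \nr h_i \nr_{L^r(\Si\cap\Om)}^r \right)^{1/r} \le \eta_{r,p,\al}(\Ga_1,\Si\cap\Om)^{-1}\left( \sum_{i=1}^k\sum_{j=1}^N \nr \de_{\Ga_0}^\al h_{ij} \nr_{L^p(\Si\cap\Om)}^p \right)^{1/p}.
\end{equation*}
Second, since $\Si\cap\Om$ is Lipschitz, hence a $b_0$-John domain, and $(h_i)_{\Si\cap\Om}=0$ for $i=k+1,\dots,N$ by~\eqref{eq:gradienteparte ortogonale a span nu}, Lemma~\ref{lem:Hurri} applies to each such $h_i$; replacing $\de_{\pa(\Si\cap\Om)}$ with the larger quantity $\de_{\Ga_0}$ (exactly as in the proof of Lemma~\ref{lem:Mixed pp Poincareaigradienti}), this gives, for $i=k+1,\dots,N$,
\begin{equation*}
	\nr h_i \nr_{L^r(\Si\cap\Om)} \le \mu_{r,p,\al}(\Si\cap\Om)^{-1}\left( \sum_{j=1}^N \nr \de_{\Ga_0}^\al h_{ij} \nr_{L^p(\Si\cap\Om)}^p \right)^{1/p}.
\end{equation*}

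It remains to add everything up. Writing $A_i:=\left( \sum_{j=1}^N \nr \de_{\Ga_0}^\al h_{ij}\nr_{L^p(\Si\cap\Om)}^p \right)^{1/p}$, so that $\nr \de_{\Ga_0}^\al \na^2 h\nr_{L^p(\Si\cap\Om)}^p=\sum_{i=1}^N A_i^p$, raising the last two displays to the power $r$, using for the second block the elementary bound $\sum_i a_i^r\le\left(\sum_i a_i^p\right)^{r/p}$ (valid for non-negative reals since $r/p\ge 1$, which is where the constraint $p\le r$ from~\eqref{eq:r p al in Hurri} enters), and then $x^{r/p}+y^{r/p}\le(x+y)^{r/p}$, we get
\begin{equation*}
	\nr \na h\nr_{L^r(\Si\cap\Om)}^r = \sum_{i=1}^k \nr h_i\nr_{L^r(\Si\cap\Om)}^r + \sum_{i=k+1}^N \nr h_i\nr_{L^r(\Si\cap\Om)}^r \le \La_{r,p,\al}(k)^r\left( \sum_{i=1}^N A_i^p \right)^{r/p} = \La_{r,p,\al}(k)^r\,\nr \de_{\Ga_0}^\al\na^2 h\nr_{L^p(\Si\cap\Om)}^r ,
\end{equation*}
with $\La_{r,p,\al}(k)$ the constant in~\eqref{eq:NEWCONSTANT La_rpal (k)}; taking $r$-th roots yields the claim. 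The degenerate cases $k=0$ (only Lemma~\ref{lem:Hurri} is used, applied to all of $h_1,\dots,h_N$) and $k=N$ (only Theorem~\ref{thm:Strengthened Poincare new RN} is used, with $\vV=\na h$) are entirely analogous.

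I do not expect any genuine obstacle: the statement is a routine strengthening of Lemma~\ref{lem:Mixed pp Poincareaigradienti}. The only points requiring a little attention are verifying that the hypotheses of Theorem~\ref{thm:Strengthened Poincare new RN} and Lemma~\ref{lem:Hurri} are met --- in particular the range conditions~\eqref{eq:r p al in Hurri} and the John (equivalently, Lipschitz) regularity of $\Si\cap\Om$ --- and the bookkeeping of the mixed $\ell^p/\ell^r$ summations when recombining the two blocks, which is precisely where the hypothesis $p\le r$ is used.
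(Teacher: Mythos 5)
Your proof is correct and follows essentially the same route as the paper's: apply Theorem~\ref{thm:Strengthened Poincare new RN} to $\vV=(h_1,\dots,h_k,0,\dots,0)$, apply Lemma~\ref{lem:Hurri} to the remaining mean-zero derivatives $h_{k+1},\dots,h_N$ (after enlarging $\de_{\pa(\Si\cap\Om)}$ to $\de_{\Ga_0}$), and recombine via the inequality $\sum_i x_i^{r/p}\le(\sum_i x_i)^{r/p}$. The only difference is cosmetic: you spell out the $A_i$ bookkeeping that the paper compresses into the single displayed inequality~\eqref{eq:ineq for Sobolev norm equivalence}.
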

	\begin{proof}
		In light of \eqref{eq:gradienteproiettatosuspan nu}, we can apply \eqref{eq:Strengthened Poincare new RN} in Theorem \ref{thm:Strengthened Poincare new RN} with $G:=\Si\cap\Om$, $A:=\Ga_1$, and $\vV:= (h_1, \dots, h_k , 0 , \dots, 0) $ to get that
		\begin{equation}\label{eq:dim prova Strengthened Poincaregradmixed}
			\left( \sum_{i=1}^k \nr h_i \nr_{L^{r} (\Si\cap\Om)} \right)^{1/r}\le \eta_{r, p , \al}(\Ga_1,\Si\cap\Om)^{-1} \, \left( \sum_{i=1}^k \sum_{j=1}^N \nr \de_{\Ga_0 }^{\al} \, h_{ij} \nr^p_{L^p (\Si\cap\Om)} \right)^{1/p}.
		\end{equation}
		In light of \eqref{eq:gradienteparte ortogonale a span nu}, we can apply \eqref{eq:John-Hurri-poincare} (with $G:=\Si\cap\Om$) to each first partial derivative $h_i$ of $h$, $i=k+1, \dots, N$.
		Raising to the power of $r$ those inequalities and \eqref{eq:dim prova Strengthened Poincaregradmixed}, and then summing up, the conclusion easily follows by using the inequality 	\begin{equation}\label{eq:ineq for Sobolev norm equivalence}
			\sum_{i=1}^N x_i^{\frac{r}{p}} \le \left( \sum_{i=1}^N x_i \right)^{\frac{r}{p}} ,
		\end{equation}
		which holds for every $(x_1, \dots, x_N) \in \RR^N$ with $x_i \ge 0$ for $i=1, \dots, N$, since $r/p \ge 1$.
	\end{proof}

	\begin{thm}
		\label{thm:Serrin-IMPROVED-W22-stability in cones}
		Let $\Si\cap\Om$ be a bounded domain satisfying the $(\te,\ca)$-uniform interior cone condition.
		Let $z \in \RR^N$ be the point chosen as in \eqref{eq:INTRO_choice of z 1of2}-\eqref{eq:INTRO_choice of z 2of2}. Then, there exists an explicit positive constant $C$ such that
		$$
		\label{ineq:weighted-IMPROVED-diff-radii-hessian}
		\rho_e - \rho_i  
		\le 
		C \,
		\begin{cases}
			\nr \de_{\Ga_0}^{1/2} \na^2 h \nr_{L^2(\Si\cap\Om)}   \ &\mbox{if $N=2$}; 
			\\
			\displaystyle  \nr \de_{\Ga_0}^{1/2} \na^2 h \nr_{L^2 (\Si\cap\Om)} \max \left[ \log \left( \frac{e \, \nr \na h \nr_{ L^\infty (\si\cap\Om) }  }{  \nr \de_{\Ga_0}^{1/2} \na^2 h \nr_{L^2 (\Si\cap\Om)}  } \right) ,1 \right]  \ &\mbox{if $N=3$}; \vspace{3pt} 
			\\
			\nr \na h \nr_{L^\infty(\Si\cap\Om)}^{(N-3)/(N-1)} \nr \de_{\Ga_0}^{1/2} \na^2 h \nr_{L^2 (\Si\cap\Om)}^{2/(N-1)}   \ &\mbox{if $N \ge 4$.}
		\end{cases}
		$$
		The constant $C$ can be explicitly estimated only in terms of $N, \ca , \te$, the constant
		$\eta_{2, 1/2}(\Ga_1,\Si\cap\Om)$ from Theorem \ref{thm:Poincare new in general},
		and the diameter $d_{\Si\cap\Om}$.
	\end{thm}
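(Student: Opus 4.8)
The plan is to follow step by step the proof of Theorem~\ref{thm:Serrin-W22-stability in cones}, replacing the weight $\de_{\Ga_0}$ (that is, the exponent $\al=1$) by $\de_{\Ga_0}^{1/2}$ (that is, $\al=1/2$) and, correspondingly, the gradient Poincar\'e inequality of Lemma~\ref{lem:Mixed pp Poincareaigradienti} by its Sobolev-strengthened version, Lemma~\ref{lem:Mixed Strengthened Poincareaigradienti}. The crucial observation is that the triple
\begin{equation*}
	r:=\frac{2N}{N-1},\qquad p:=2,\qquad \al:=\frac12
\end{equation*}
satisfies condition~\eqref{eq:r p al in Hurri} for every $N\ge 2$ (indeed $p(1-\al)=1<N$ and $\frac{Np}{N-p(1-\al)}=\frac{2N}{N-1}$), so that Lemma~\ref{lem:Mixed Strengthened Poincareaigradienti} provides
\begin{equation}\label{eq:sketch half poincare}
	\nr \na h \nr_{L^{2N/(N-1)}(\Si\cap\Om)} \le C \, \nr \de_{\Ga_0}^{1/2}\, \na^2 h \nr_{L^2(\Si\cap\Om)},\qquad C\le \La_{2N/(N-1),\,2,\,1/2}(k).
\end{equation}
Moreover the exponent $r=\frac{2N}{N-1}$ lies on the three sides of the threshold $N$ exactly according to the dimensional split appearing in the statement: since $r-N=\frac{N(3-N)}{N-1}$, one has $r>N$ if and only if $N=2$, $r=N$ if and only if $N=3$, and $r<N$ if and only if $N\ge 4$.

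Next I would invoke \cite[Lemma 6.4]{Pog3} with $p:=r=\frac{2N}{N-1}$. When $N=2$ (so that $p=4>N$, the Morrey regime) this yields $\rho_e-\rho_i\le C\,\nr\na h\nr_{L^4(\Si\cap\Om)}$, the H\"older exponent being absorbed into $C$ through $d_{\Si\cap\Om}$. When $N=3$ (so that $p=3=N$, the borderline regime) it yields
\begin{equation*}
	\rho_e-\rho_i\le C\,\nr\na h\nr_{L^3(\Si\cap\Om)}\,\max\!\left[\log\!\left(\frac{e\,\nr\na h\nr_{L^\infty(\Si\cap\Om)}}{\nr\na h\nr_{L^3(\Si\cap\Om)}}\right),1\right].
\end{equation*}
When $N\ge 4$ (so that $p=\frac{2N}{N-1}<N$) it yields the interpolation bound $\rho_e-\rho_i\le C\,\nr\na h\nr_{L^\infty(\Si\cap\Om)}^{1-p/N}\,\nr\na h\nr_{L^p(\Si\cap\Om)}^{p/N}$, in which $1-p/N=\frac{N-3}{N-1}$ and $p/N=\frac{2}{N-1}$. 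In each of the three cases, substituting \eqref{eq:sketch half poincare} for the $L^{2N/(N-1)}$-norm of $\na h$ produces precisely the right-hand side claimed in the statement; for $N=3$ one additionally uses, exactly as in the proof of Theorem~\ref{thm:Serrin-W22-stability in cones}, that the function $t\mapsto t\max\{\log(A/t),1\}$ is nondecreasing on $(0,\infty)$ for every $A>0$, in order to replace $\nr\na h\nr_{L^3(\Si\cap\Om)}$ by the larger quantity $\nr\de_{\Ga_0}^{1/2}\na^2 h\nr_{L^2(\Si\cap\Om)}$ inside that function.

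Finally, the constant has to be tracked. The constant furnished by \cite[Lemma 6.4]{Pog3} is explicit in $N$ and in the $b_0$-John constant of $\Si\cap\Om$, which by Remark~\ref{rem:stime mu HS}(iii) is controlled in terms of $\te,\ca$ and $d_{\Si\cap\Om}$; and $\La_{2N/(N-1),\,2,\,1/2}(k)$ is, by Theorem~\ref{thm:Strengthened Poincare new RN}, at most
\begin{equation*}
	\max\!\left\{|\Si\cap\Om|^{\frac1r-\frac12}\,\eta_{2,1/2}(\Ga_1,\Si\cap\Om)^{-1},\ \mu_{r,\,2,\,1/2}(\Si\cap\Om)^{-1}\right\}
\end{equation*}
(with the obvious simplifications if $k=0$ or $k=N$), where $\mu_{r,\,2,\,1/2}(\Si\cap\Om)^{-1}$ is estimated via Remark~\ref{rem:stime mu HS}(i) in terms of $b_0$ and of $|\Si\cap\Om|\le\om_N d_{\Si\cap\Om}^N$. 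Altogether $C$ depends only on $N,\ca,\te,\eta_{2,1/2}(\Ga_1,\Si\cap\Om)$ and $d_{\Si\cap\Om}$, as claimed. The only genuinely delicate point in the whole argument is the verification that $r=\frac{2N}{N-1}$ is admissible in~\eqref{eq:r p al in Hurri} and that it falls on the correct side of $N$ in each dimension; granting this, everything else is a verbatim transcription of the $\al=1$ argument, so I expect no real obstacle.
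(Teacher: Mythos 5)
Your proposal is correct and follows essentially the same route as the paper's own proof: for each dimension the paper applies \cite[Lemma 6.4]{Pog3} with $p=\tfrac{2N}{N-1}$ (which equals $4$ when $N=2$ and $3$ when $N=3$) together with Lemma~\ref{lem:Mixed Strengthened Poincareaigradienti} for $r=\tfrac{2N}{N-1}$, $p=2$, $\al=\tfrac12$, and tracks the constant exactly as you do via Remark~\ref{rem:stime mu HS} and Theorem~\ref{thm:Strengthened Poincare new RN}. Your unified presentation and the explicit verification that $r=\tfrac{2N}{N-1}$ sits on the correct side of $N$ in each regime is a clean way of organizing what the paper states case by case.
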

	\begin{proof}
As usual, we use the letter $C$ to denote a constant whose value may change line by line. All the constants $C$ can be explicitly computed (by following the steps of the proof) and estimated in terms of the parameters declared in the statement only.

In particular, in the following proof we are going to apply Lemma \ref{lem:Mixed Strengthened Poincareaigradienti}, which introduces (for some choices of $r$, $p$, $\al$) the constant $\La_{r,p,\al}(k)$ defined in \eqref{eq:NEWCONSTANT La_rpal (k)}. Notice that $\La_{r,p,\al}(k)$ can be estimated in terms of $N, r, p , \ca, \te, d_{\Si\cap\Om}$, and, if $1 \le k \le N$, $\eta_{p, \al}(\Ga_1 , \Si \cap \Om )$.
In fact, $\mu_{r,p,\al}(\Si\cap\Om)$ (which appears in \eqref{eq:NEWCONSTANT La_rpal (k)} if $0 \le k \le N-1$) can be estimated in terms of $N, p , \ca, \te, d_{\Si\cap\Om}$ by recalling Remark \ref{rem:stime mu HS}.
Moreover, from the statement of Theorem \ref{thm:Strengthened Poincare new RN} (and recalling Remark \ref{rem:stime mu HS}) we have that $\eta_{r,p,\al}(\Ga_1,\Si\cap\Om)$ (which appears in \eqref{eq:NEWCONSTANT La_rpal (k)} if $1 \le k \le N$) can be estimated in terms of $N, p , \ca, \te, d_{\Si\cap\Om}$ and $\eta_{p,\al}(\Ga_1 , \Si \cap \Om )$.

		(i) Let $N=2$. By using \cite[Lemma 6.4]{Pog3} with $p:=4$ we have that
		$$
		\rho_e - \rho_i \le C \, \nr \na h \nr_{L^4 (\Si\cap \Om)} .
		$$
		By applying Lemma \ref{lem:Mixed Strengthened Poincareaigradienti} (with $r:=4$, $p:=2$, and $\al:= 1/2$) we obtain that
		\begin{equation*}
			\nr \na h \nr_{L^4 (\Si\cap\Om)} \le C \, \nr \de_{\Ga_0}^{1/2} \na^2 h  \nr_{L^2 (\Si\cap\Om)} ,
		\end{equation*}
		and the conclusion follows.
		
		(ii) Let $N=3$. By using Lemma \ref{lem:Mixed Strengthened Poincareaigradienti} with $r:=3$, $p:=2$, $\al:= 1/2$, we get
		$$
		\nr \na h \nr_{L^3 (\Si\cap\Om)} \le C \, \nr \de_{\Ga_0}^{1/2} \na^2 h \nr_{L^2 (\Si\cap\Om)}.
		$$
		The conclusion follows by using \cite[Lemma 6.4]{Pog3} with $p:=N=3$.
		
		(iii) When $N\ge 4$, we use \cite[Lemma 6.4]{Pog3} with $p:=2N/(N-1)$ and put it together with Lemma \ref{lem:Mixed Strengthened Poincareaigradienti} with $r:={\frac{2N}{N-1}}$, $p:=2$, $\al:=1/2$.
	\end{proof}

	By coupling the previous theorem with a suitable upper bound for $\nr \na h \nr_{L^{\infty}(\Si\cap\Om)}$, we easily obtain the following.
	
	\begin{cor}
		\label{cor:Serrin-IMPROVED-W22-stability in cones with upper bound}
		Let $\Si\cap\Om$ be a bounded domain satisfying the $(\te,\ca)$-uniform interior cone condition.
		Let $z \in \RR^N$ be the point chosen as in \eqref{eq:INTRO_choice of z 1of2}-\eqref{eq:INTRO_choice of z 2of2}. Then, there exists an explicit positive constant $C$ such that
		$$
		\label{ineq:weighted-IMPROVED-diff-radii-hessian with gradient bound}
		\rho_e - \rho_i  
		\le 
		C \,
		\begin{cases}
			\nr \de_{\Ga_0}^{1/2} \na^2 h \nr_{L^2(\Si\cap\Om)}   \ &\mbox{if $N=2$}; 
			\\
			\displaystyle  \nr \de_{\Ga_0}^{1/2} \na^2 h \nr_{L^2 (\Si\cap\Om)} \max \left[ \log \left( \frac{e  }{  \nr \de_{\Ga_0}^{1/2} \na^2 h \nr_{L^2 (\Si\cap\Om)}  } \right) ,1 \right]  \ &\mbox{if $N=3$}; \vspace{3pt} 
			\\
			\nr \de_{\Ga_0}^{1/2} \na^2 h \nr_{L^2 (\Si\cap\Om)}^{2/(N-1)}   \ &\mbox{if $N \ge 4$.}
		\end{cases}
		$$
		The constant $C$ can be explicitly estimated only in terms of $N, \ca , \te$, the constant
		$\eta_{2, 1/2}(\Ga_1,\Si\cap\Om)$ from Theorem \ref{thm:Poincare new in general}, the diameter $d_{\Si\cap\Om}$, and, if $N \ge 3$, $\nr \na u \nr_{L^{\infty}(\Si\cap\Om)}$.
	\end{cor}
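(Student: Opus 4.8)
The plan is to deduce this corollary from Theorem~\ref{thm:Serrin-IMPROVED-W22-stability in cones} by combining it with an a priori bound for $\nr\na h\nr_{L^\infty(\Si\cap\Om)}$ that depends only on the admissible parameters, precisely as in the proof of Corollary~\ref{cor:Serrin-W22-stability in cones with upper bound} (and of \cite[Corollary 6.7]{Pog3}). When $N=2$ the norm $\nr\na h\nr_{L^\infty(\Si\cap\Om)}$ does not occur in Theorem~\ref{thm:Serrin-IMPROVED-W22-stability in cones}, so the corollary coincides with that theorem and there is nothing to prove; hence I focus on $N\ge 3$, where $\nr\na u\nr_{L^\infty(\Si\cap\Om)}$ is allowed among the parameters.

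First I would record the gradient bound. Since $h=q-u$ with $q(x)=|x-z|^2/2$, we have $\na h=(x-z)-\na u$ a.e.\ in $\Si\cap\Om$, so $\nr\na h\nr_{L^\infty(\Si\cap\Om)}\le\sup_{x\in\Si\cap\Om}|x-z|+\nr\na u\nr_{L^\infty(\Si\cap\Om)}$. Recalling that the vertex of $\Si$ (the origin) belongs to $\ol{\Si\cap\Om}$, we have $|x|\le d_{\Si\cap\Om}$ on $\Si\cap\Om$; moreover, by \eqref{eq:INTRO_choice of z 1of2}-\eqref{eq:INTRO_choice of z 2of2}, $z_i=0$ for $i\le k$ and $|z_i|\le d_{\Si\cap\Om}+\nr\na u\nr_{L^\infty(\Si\cap\Om)}$ for $i>k$, so that $|z|\le\sqrt N\,(d_{\Si\cap\Om}+\nr\na u\nr_{L^\infty(\Si\cap\Om)})$. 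Hence
\[
\nr\na h\nr_{L^\infty(\Si\cap\Om)}\le M:=(1+\sqrt N)\bigl(d_{\Si\cap\Om}+\nr\na u\nr_{L^\infty(\Si\cap\Om)}\bigr)+1 ,
\]
an explicit constant of the required form (the $+1$ is added only to ensure $M\ge 1$). I would also use the trivial a priori bound $\rho_e-\rho_i\le d_{\Si\cap\Om}$, which follows from \eqref{def_rho} by the reverse triangle inequality.

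Then I would plug this into Theorem~\ref{thm:Serrin-IMPROVED-W22-stability in cones}, whose right-hand side is non-decreasing in $\nr\na h\nr_{L^\infty(\Si\cap\Om)}$. For $N\ge 4$ the factor $\nr\na h\nr_{L^\infty(\Si\cap\Om)}^{(N-3)/(N-1)}\le M^{(N-3)/(N-1)}$ is absorbed into the constant, and the claim follows at once. For $N=3$, set $s:=\nr\de_{\Ga_0}^{1/2}\na^2 h\nr_{L^2(\Si\cap\Om)}$, so that Theorem~\ref{thm:Serrin-IMPROVED-W22-stability in cones} gives $\rho_e-\rho_i\le C\,s\,\max[\log(eM/s),1]$. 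If $s\ge e^{-1}$, then $\rho_e-\rho_i\le d_{\Si\cap\Om}\le e\,d_{\Si\cap\Om}\,s\le e\,d_{\Si\cap\Om}\,s\,\max[\log(e/s),1]$, which is already of the asserted form. If $s<e^{-1}$, then $\log(e/s)>2$, so $\log(eM/s)=\log M+\log(e/s)\le\log(eM)\,\log(e/s)=\log(eM)\,\max[\log(e/s),1]$ (here $M\ge 1$ is used), whence $\rho_e-\rho_i\le C\log(eM)\,s\,\max[\log(e/s),1]$; absorbing $\log(eM)$ into the constant completes the argument.

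I do not foresee any genuine difficulty once Theorem~\ref{thm:Serrin-IMPROVED-W22-stability in cones} is available: the argument is essentially bookkeeping, and one checks at each step that the constants depend only on $N,\ca,\te,\eta_{2,1/2}(\Ga_1,\Si\cap\Om),d_{\Si\cap\Om}$ and, for $N\ge 3$, $\nr\na u\nr_{L^\infty(\Si\cap\Om)}$. The one point deserving a little care is the logarithmic regime $N=3$, where the factor $M$ must be taken out of the logarithm; this is handled by the elementary dichotomy on the size of $s=\nr\de_{\Ga_0}^{1/2}\na^2 h\nr_{L^2(\Si\cap\Om)}$ together with the a priori bound $\rho_e-\rho_i\le d_{\Si\cap\Om}$.
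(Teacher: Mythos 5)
Your high-level scheme (invoke Theorem~\ref{thm:Serrin-IMPROVED-W22-stability in cones}, bound $\nr\na h\nr_{L^\infty(\Si\cap\Om)}$ by an admissible quantity, and pull the $L^\infty$-factor out of the logarithm via a dichotomy on the size of $\nr\de_{\Ga_0}^{1/2}\na^2 h\nr_{L^2}$, using $\rho_e-\rho_i\le d_{\Si\cap\Om}$ in the large-$s$ regime) is the same as the paper's. The $N=2$ observation and the $N=3$ dichotomy are fine. The gap is in the $L^\infty$ bound for $\na h$.

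You assert ``the vertex of $\Si$ (the origin) belongs to $\ol{\Si\cap\Om}$'' and deduce $|x|\le d_{\Si\cap\Om}$ on $\Si\cap\Om$. Nothing in the setting forces the vertex of $\Si$ to lie in $\ol{\Si\cap\Om}$: $\Om$ is an arbitrary smooth bounded domain and $\Si\cap\Om$ may be located far from the origin. When $k\ge 1$ one has $z_i=0$ for $i\le k$, so $|x_i-z_i|=|x_i|$, and this quantity cannot in general be controlled by $d_{\Si\cap\Om}$ and $\nr\na u\nr_{L^\infty}$ alone. A concrete obstruction: take a very thin convex cone $\Si=\{|x_1|<\eps\, x_2\}\subset\RR^2$ and $\Om=B_1\big((0,R_0)\big)$ with $R_0\sim 1/(2\eps)$; then $\Si\cap\Om$ has diameter $O(1)$, satisfies a $(\te,\ca)$-uniform interior cone condition with $\te,\ca$ bounded away from degeneracy, and has $\Ga_1\ne\varnothing$ on both lateral faces, so $k=2$ and $z=0$, while $\sup_{\Si\cap\Om}|x-z|\sim 1/\eps\to\infty$ as $\eps\to 0$. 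Thus $\nr\na h\nr_{L^\infty}$ blows up, and so does your $M$, even though $N,\ca,\te,d_{\Si\cap\Om}$ and $\nr\na u\nr_{L^\infty}$ stay bounded. The corollary is nevertheless consistent because in this degeneration the Poincar\'e constant $\eta_{2,1/2}(\Ga_1,\Si\cap\Om)^{-1}$ also blows up, and the paper's proof routes the $L^\infty$ bound for $\na h$ precisely through that constant: following \cite[Corollary 6.7 and (6.8)]{Pog3}, one exploits the harmonicity of $h$ together with the weighted Poincar\'e inequality \eqref{eq:dimprovaPoincaregradmixed} (here with $p=2$, $\al=1/2$) to control $\nr\na h\nr_{L^\infty(\Si\cap\Om)}$ by a quantity depending only on $N,\ca,\te,\eta_{2,1/2}(\Ga_1,\Si\cap\Om),d_{\Si\cap\Om}$ and $\nr\na u\nr_{L^\infty(\Si\cap\Om)}$. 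Your naive bound $\nr\na h\nr_{L^\infty}\le \sup_{\Si\cap\Om}|x-z|+\nr\na u\nr_{L^\infty}$ ignores this geometric dependence (and indeed $\eta_{2,1/2}$ appears nowhere in your estimate of $M$, whereas it is explicitly listed as one of the parameters on which the constant may depend). So you need to replace your gradient bound by the Poincar\'e-based one from \cite{Pog3}; the rest of your argument then goes through.
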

	\begin{proof}
		The proof is analogous to that of \cite[Corollary 6.7]{Pog3} with the only difference that to obtain the upper bound for $\nr \na h \nr_{L^\infty(\Si\cap\Om)}$ we now use \eqref{eq:dimprovaPoincaregradmixed} with $ \al:=1/2$ and $p:=2$ (instead of $\al:=0$ and $p:=2$), hence obtaining \cite[(6.8)]{Pog3} with $\eta_{2, 0}(\Ga_1,\Si\cap\Om)$ replaced by $\eta_{2, 1/2}(\Ga_1,\Si\cap\Om)$.
	\end{proof}
	
	We are now ready to prove the following improved version of Theorem \ref{thm:Serrin general stability rhoe rhoi}, under the additional geometrical assumption \eqref{eq:condition relation dist improved}.

	\begin{thm}[Improved stability in terms of $\rho_e - \rho_i$ for Serrin's problem in cones] 
		\label{thm:Improved Serrin stability rhoe rhoi}
		Let $\Si\cap\Om$ be a bounded domain and assume that $\Si$ is a convex cone and $\Si\cap\Om$ satisfies
		the $(\te,\ca)$-uniform interior cone condition.
		Assume that $\Si\cap\Om$ satisfies the $\ul{r}_i$-uniform interior sphere condition relative to the cone $\Si$ (as in Definition \ref{def:interior sphere relative to cone}) together with \eqref{eq:condition relation dist improved}. Let $z \in \RR^N$ be the point chosen in \eqref{eq:INTRO_choice of z 1of2}-\eqref{eq:INTRO_choice of z 2of2}.
		Then, we have that
		\begin{equation}
			\label{eq:stability Serrin rhoei Improved}
			\rho_e - \rho_i  \le 
			C \,
			\begin{cases}
				\nr u_\nu - R \nr_{L^2(\Ga_0)},  \ &\mbox{if } N =2
				\\
				\nr u_\nu - R \nr_{L^2(\Ga_0)} \max \left[ \log \left( \frac{1  }{ \nr u_\nu - R \nr_{L^2(\Ga_0)} } \right) , 1 \right],   \ &\mbox{if } N=3 ,
				\\
				\nr u_\nu - R \nr^{\frac{2}{N-1}}_{L^2(\Ga_0)},  \ &\mbox{if } N \ge 4 .
			\end{cases}
		\end{equation}
		The constant $C$ can be explicitly estimated only in terms of $N, \ca , \te$, the constant
		$\eta_{2, 1/2}(\Ga_1,\Si\cap\Om)$ from Theorem \ref{thm:Poincare new in general},
		the diameter $d_{\Si\cap\Om}$, $\ul{r}_i$, and, if $N \ge 3$, $\nr \na u \nr_{L^\infty(\Si\cap\Om)}$.
	\end{thm}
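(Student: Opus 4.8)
The plan is to run the scheme of the proof of Theorem \ref{thm:Serrin general stability rhoe rhoi}, but replacing throughout the \emph{quadratic} comparison \eqref{eq:relation u dist general} between $-u$ and $\de_{\Ga_0}$ by the \emph{linear} one \eqref{eq:relation u dist improved} of Lemma \ref{lem:realtion dist IMPROVED}, which is available precisely because $\Si$ is convex and $\Si\cap\Om$ satisfies the $\ul{r}_i$-uniform interior sphere condition relative to $\Si$ together with \eqref{eq:condition relation dist improved}. The interior sphere condition also yields, via \cite[Lemma 4.4]{Pog3}, the lower bound $u_\nu \ge \ul{r}_i$ on $\Ga_0$, so we may take $\lbunu := \ul{r}_i$ in all trace-type arguments. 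Concretely, the goal is to establish
\begin{equation*}
	\nr \de_{\Ga_0}^{1/2} \na^2 h \nr_{L^2(\Si\cap\Om)}^2 \le C \, \nr u_\nu - R \nr_{L^2(\Ga_0)}^2 ,
\end{equation*}
after which \eqref{eq:stability Serrin rhoei Improved} is obtained by feeding this into Corollary \ref{cor:Serrin-IMPROVED-W22-stability in cones with upper bound} (and, for $N=3$, using the monotonicity of $t\mapsto t\max\{\log(A/t),1\}$ to absorb the multiplicative constant inside the logarithm).

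The first step is the $\al=1/2$ version of the chain \eqref{eq:weighted trace for nah}--\eqref{eq:Serrin Prestab nah}. Starting from the trace identity underlying \cite[(5.13)]{Pog3} and estimating the bulk term by Lemma \ref{lem:Mixed pp Poincareaigradienti} with $p:=2$, $\al:=1/2$ (which introduces $\La_{2,1/2}(k)$, hence $\eta_{2,1/2}$ rather than $\eta_{2,1}$) followed by \eqref{eq:relation u dist improved}, one gets
\begin{equation*}
	\int_{\Si\cap\Om} |\na h|^2 \, dx \le \frac{2}{\ul{r}_i} \, \La_{2,1/2}(k)^2 \int_{\Si\cap\Om} (-u)\, |\na^2 h|^2 \, dx .
\end{equation*}
Using $u_\nu\ge\ul{r}_i$ on $\Ga_0$ and retaining the nonnegative $\Ga_1$-term, this produces a bound $\nr\na h\nr_{L^2(\Ga_0)}^2 \le C\bigl(\int_{\Si\cap\Om}(-u)|\na^2 h|^2 + \int_{\Ga_1}u\langle\na^2 u\na u,\nu\rangle\,dS_x\bigr)$ with $C$ controlled by $N$, $\ul{r}_i$, $\La_{2,1/2}(k)$; inserting \eqref{eq:hessiana h}, \eqref{idwps} and \eqref{eq:RHSIDE Serrin} then gives the improved pre-stability estimate $\nr\na h\nr_{L^2(\Ga_0)} \le \frac{C}{2}\nr u_\nu^2-R^2\nr_{L^2(\Ga_0)}$, now free of $\eta_{2,1}$.

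The second step combines \eqref{eq:hessiana h}, \eqref{idwps}, nonnegativity of the $\Ga_1$-term, \eqref{eq:RHSIDE Serrin} and the above to obtain $\int_{\Si\cap\Om}(-u)|\na^2 h|^2\,dx \le C\nr u_\nu^2-R^2\nr_{L^2(\Ga_0)}^2$, which by \eqref{eq:relation u dist improved} upgrades to $\nr\de_{\Ga_0}^{1/2}\na^2 h\nr_{L^2(\Si\cap\Om)}^2 \le C\nr u_\nu^2-R^2\nr_{L^2(\Ga_0)}^2$. Passing from $\nr u_\nu^2-R^2\nr_{L^2(\Ga_0)}$ to $\nr u_\nu-R\nr_{L^2(\Ga_0)}$ is done by factoring $u_\nu^2-R^2=(u_\nu-R)(u_\nu+R)$ and bounding $u_\nu+R$ on $\Ga_0$: for $N\ge 3$ this costs the factor $2\nr\na u\nr_{L^\infty(\Si\cap\Om)}$ that appears in the statement, while for $N=2$ one keeps the constant independent of $\nr\na u\nr_{L^\infty(\Si\cap\Om)}$ by noting that $\rho_e-\rho_i\le d_{\Si\cap\Om}$ makes the estimate trivial unless $\nr u_\nu-R\nr_{L^2(\Ga_0)}$ is already small. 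Inserting the outcome into Corollary \ref{cor:Serrin-IMPROVED-W22-stability in cones with upper bound} and, for $N=3$, performing the elementary logarithm bookkeeping, completes the proof.

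I expect the main obstacle to be the first step: one has to reproduce the weighted trace inequality for $\na h$ with the weight $\de_{\Ga_0}^{1/2}$ in place of $\de_{\Ga_0}$, so that the constant ends up depending on $\eta_{2,1/2}(\Ga_1,\Si\cap\Om)$ only and not on the larger-weight constant $\eta_{2,1}$; this is exactly what dictates using Lemma \ref{lem:Mixed pp Poincareaigradienti} with $\al=1/2$ together with the linear bound \eqref{eq:relation u dist improved} rather than the quadratic one. The remainder is bookkeeping, apart from the mild care required in the $N=2$ case (to keep $\nr\na u\nr_{L^\infty(\Si\cap\Om)}$ out of the constant) and in the logarithmic case $N=3$.
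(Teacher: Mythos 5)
Your overall scheme matches the paper's: establish a pre-stability estimate bounding the weighted $L^2$-Hessian of $h$ by the boundary deficit, and feed it into the $\rho_e-\rho_i$ machinery (Theorem~\ref{thm:Serrin-IMPROVED-W22-stability in cones} / Corollary~\ref{cor:Serrin-IMPROVED-W22-stability in cones with upper bound}). The paper's own proof is just one line: put together \eqref{eq:stima prefinal Serrin rhoerhoi}, \eqref{eq:relation u dist improved}, and Theorem~\ref{thm:Serrin-IMPROVED-W22-stability in cones}. Notably, you add a genuine refinement that the paper's one-liner glosses over: taken literally, \eqref{eq:stima prefinal Serrin rhoerhoi} comes from \eqref{eq:Serrin Prestab nah}, whose constant is controlled by $\La_{2,1}(k)$ and hence by $\eta_{2,1}(\Ga_1,\Si\cap\Om)$ --- not $\eta_{2,1/2}$ as advertised in the theorem. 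Since $\eta_{2,1}^{-1}$ is not controlled by $\eta_{2,1/2}^{-1}$ (only the reverse inequality holds via $\de_{\Ga_0}\le d_{\Si\cap\Om}$), one does need to rerun the trace chain with Lemma~\ref{lem:Mixed pp Poincareaigradienti} at $\al=1/2$ combined with the linear bound \eqref{eq:relation u dist improved} rather than the quadratic \eqref{eq:relation u dist general}, exactly as you do. That is the correct way to land on the constant the statement claims.

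One point where your argument is not convincing is the claim that for $N=2$ you can avoid the factor $\nr\na u\nr_{L^\infty(\Si\cap\Om)}$ by invoking $\rho_e-\rho_i\le d_{\Si\cap\Om}$. That observation only handles the regime where $\nr u_\nu-R\nr_{L^2(\Ga_0)}$ is bounded below by a known quantity; in the regime where it is small, the passage from $\nr u_\nu^2-R^2\nr_{L^2(\Ga_0)}$ to $\nr u_\nu-R\nr_{L^2(\Ga_0)}$ still needs a pointwise bound on $u_\nu+R$, and the natural one is $u_\nu+R\le 2\nr\na u\nr_{L^\infty(\Si\cap\Om)}$ (using $R=(u_\nu)_{\Ga_0}$), irrespective of $N$. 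Smallness of the $L^2$-norm does not give pointwise control of $u_\nu+R$. So your dichotomy does not close the gap; either the $N=2$ constant also carries $\nr\na u\nr_{L^\infty(\Si\cap\Om)}$ (which can then be replaced via Remark~\ref{rem:stima norma na u infinito SicapOm con N r_e diam} at the cost of $\ul{r}_e$), or one should keep the estimate in the form $\rho_e-\rho_i\le C\nr u_\nu^2-R^2\nr_{L^2(\Ga_0)}$ for $N=2$. Apart from that, the $N=3$ log bookkeeping via monotonicity of $t\mapsto t\max\{\log(A/t),1\}$ and the $N\ge4$ substitution are correct.
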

	\begin{proof}
		The conclusion follows by putting together \eqref{eq:stima prefinal Serrin rhoerhoi}, \eqref{eq:relation u dist improved} and Theorem \ref{thm:Serrin-IMPROVED-W22-stability in cones}.
	\end{proof}

	\begin{rem}\label{rem:stima norma na u infinito SicapOm con N r_e diam}
		{\rm
			Whenever $\Si\cap\Om$ satisfies the $\ul{r}_e$-uniform exterior sphere condition relative to $\Si$ in the sense of the definition introduced in \cite[Definition 4.6]{Pog3}, $\nr \na u \nr_{L^{\infty}(\Si\cap\Om)}$ can be explicitly estimated in terms of $N$, $d_{\Si\cap\Om}$ and $\ul{r}_e$ (see \cite[Lemma 4.7 and Lemma 4.8]{Pog3}).
		}
	\end{rem}

	\section{Additional remarks}\label{sec:additional remarks}
	\subsection{Alternative choices for the point \texorpdfstring{$z$}{z}}\label{subsec:alternative choices z}
	As already mentioned in the Introduction, different choices of the point $z$ lead to alternative stability results. For instance, we can avoid using \eqref{eq:Poincare new RN} (and \eqref{eq:Strengthened Poincare new RN}) and hence completely remove the dependence on $\eta_{p,\al}( \Ga_1 ,  \Si \cap \Om )^{-1}$ (and $\eta_{r,p,\al}( \Ga_1 ,  \Si \cap \Om )^{-1}$) for any $0 \le k \le N$, at the cost of leaving the point $z$ free to have non-zero components also in the directions spanned by $\nu$ on $\Ga_1$. A suitable choice to do this may be the following
	\begin{equation}\label{2nd choice z}
		z= \frac{1}{|\Si\cap\Om|} \int_{\Si\cap\Om} (x - \na u) \, dx .
	\end{equation}
	Thanks to this choice of $z$ we have the following result which is a modification of the results contained in Theorem \ref{thm:INTRO_Serrinstab Lipschitz} and in Theorems \ref{thm:Serrin general stability rhoe rhoi}, \ref{thm:Improved Serrin stability rhoe rhoi}.
	
	\begin{thm}\label{thm:final alternative z}
		Setting $z\in\mathbb{R}^N$ as in \eqref{2nd choice z} we have that:
\begin{itemize}
\item[(i)] Theorem \ref{thm:INTRO_Serrinstab Lipschitz} remains true with $\La_{2, 1}(k)$ replaced simply by $\mu_{2,1}(\Si\cap\Om)^{-1}$.

\item[(ii)] Theorem \ref{thm:Serrin general stability rhoe rhoi} holds true with an explicit constant $C$ only depending on $N$, $\ca$, $\te$, $d_{\Si\cap\Om}$, $\lbunu$, and $\nr \na u \nr_{L^\infty(\Si\cap\Om)}$. Moreover, Theorem \ref{thm:Improved Serrin stability rhoe rhoi}  holds true with an explicit constant $C$ only depending on $N$, $\ca$, $\te$, $d_{\Si\cap\Om}$, $\ul{r}_i$, and, if $N\ge 3$, $\nr \na u \nr_{L^\infty(\Si\cap\Om)}$.
\end{itemize}		
	\end{thm}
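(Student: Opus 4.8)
The plan is to re-run the proofs of Theorems~\ref{thm:INTRO_Serrinstab Lipschitz}, \ref{thm:Serrin general stability rhoe rhoi} and \ref{thm:Improved Serrin stability rhoe rhoi} with the point $z$ of \eqref{eq:INTRO_choice of z 1of2}--\eqref{eq:INTRO_choice of z 2of2} replaced by the one in \eqref{2nd choice z}, and to track the only two changes this causes.

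The first and decisive point is that, with $z$ as in \eqref{2nd choice z}, \emph{every} first partial derivative of $h=q-u$ has zero mean over $\Si\cap\Om$: indeed $h_i=(x_i-z_i)-u_i$ and \eqref{2nd choice z} says $z_i=|\Si\cap\Om|^{-1}\int_{\Si\cap\Om}(x_i-u_i)\,dx$ for \emph{all} $i\in\{1,\dots,N\}$, not merely for $i>k$ as in \eqref{eq:INTRO_choice of z 2of2}; hence $\int_{\Si\cap\Om}h_i\,dx=0$ for every $i$. Therefore the vector-field Poincar\'e inequality \eqref{eq:Poincare new RN} is never needed: in Lemma~\ref{lem:Mixed pp Poincareaigradienti} one instead applies \eqref{eq:BoasStraube-poincare} to each $h_i$ separately (using $\de_{\pa(\Si\cap\Om)}\le\de_{\Ga_0}$) and sums in $\ell^p$, obtaining $\|\na h\|_{L^p(\Si\cap\Om)}\le\mu_{p,\al}(\Si\cap\Om)^{-1}\|\de_{\Ga_0}^\al\na^2h\|_{L^p(\Si\cap\Om)}$; likewise, using \eqref{eq:John-Hurri-poincare} in place of \eqref{eq:Strengthened Poincare new RN} turns Lemma~\ref{lem:Mixed Strengthened Poincareaigradienti} into the same statement with $\mu_{r,p,\al}(\Si\cap\Om)^{-1}$ in place of $\La_{r,p,\al}(k)$. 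Since $\na^2h=I-\na^2u$ is unchanged, \eqref{eq:hessiana h} persists, and the $L^\infty$ bound for $\na h$ entering Corollaries~\ref{cor:Serrin-W22-stability in cones with upper bound} and~\ref{cor:Serrin-IMPROVED-W22-stability in cones with upper bound} is obtained in the same way; thus $\eta_{p,\al}(\Ga_1,\Si\cap\Om)$ disappears from every constant that previously contained it.

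The second point — and the step I expect to be the main obstacle — is that the point of \eqref{2nd choice z} does \emph{not} satisfy \eqref{eq:INTRO_inner product z in cone}: on $\Ga_1$ one has $h_\nu=\langle x-z,\nu\rangle-u_\nu=-\langle z,\nu\rangle$, which need not vanish, so neither the fundamental identity \eqref{idwps} nor the trace identities of \cite{Pog3} behind Lemma~\ref{lem:weighted trace inequality ad hoc} and \eqref{eq:Serrin Prestab nah} apply verbatim. My plan is to keep the trace identity $\int_{\Ga_0}|\na h|^2u_\nu\,dS_x=2\int_{\Si\cap\Om}(-u)|\na^2h|^2\,dx+N\int_{\Si\cap\Om}|\na h|^2\,dx+2\int_{\Ga_1}u\langle\na^2h\,\na h,\nu\rangle\,dS_x$, which follows from $\De h=0$ with no constraint on $z$, and to compute the $\Ga_1$ integrand using $\langle x,\nu\rangle=0$ and $u_\nu=0$ on $\Ga_1\subset\pa\Si$ together with the identity $\langle\na^2u\,x,\nu\rangle=0$ on $\Ga_1$ (valid for cones, cf.\ \cite{PT}): writing $z=\bar z+\hat z$ with $\bar z\in[\mathrm{span}\{\nu(x):x\in\Ga_1\}]^{\bot}$ (exactly the point of \eqref{eq:INTRO_choice of z 2of2}) and $\hat z\in\mathrm{span}\{\nu(x):x\in\Ga_1\}$, and noting that $\na^2h=\na^2\bar h$, so that \eqref{idwps} \emph{does} hold with $\bar h$, one exhibits the new $\Ga_1$ contribution as the old (non-negative, by convexity of $\Si$) one plus an explicit error term linear in $\hat z$. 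The heart of the matter is to reabsorb this error: convexity of $\Si$ (through $\int_{\Ga_1}u\langle\na^2u\na u,\nu\rangle\ge0$ and the sign of the second fundamental form of $\pa\Si$) disposes of the main diagonal contribution, while the remaining cross terms are pairings of $\hat z$ with vectors lying in $\mathrm{span}\{\nu(x):x\in\Ga_1\}$ that a divergence-theorem computation makes explicit — for instance $\int_{\Ga_0}(u_\nu^2-R^2)\nu\,dS_x=\int_{\Ga_1}\bigl(|\na u|^2+2Nu+R^2\bigr)\nu\,dS_x$ — so that, after a Young-type splitting, they are controlled by $\|u_\nu^2-R^2\|_{L^2(\Ga_0)}^2$ and by $\|\na h\|_{L^2(\Ga_0)}^2$ (the latter absorbed into the left-hand side), with constants of the advertised type.

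With Lemmas~\ref{lem:Mixed pp Poincareaigradienti}, \ref{lem:Mixed Strengthened Poincareaigradienti} carrying $\mu$ in place of $\La$ and the identity/trace step re-established as above, everything else is routine: the triangle inequality \eqref{eq:triangle inequality for Serrin} and the elementary bound $|u_\nu-R|\le(2\lbunu)^{-1}|u_\nu^2-R^2|$ give part (i); \eqref{eq:stima prefinal Serrin rhoerhoi} together with \eqref{eq:relation u dist general}, resp.\ \eqref{eq:relation u dist improved}, and Theorems~\ref{thm:Serrin-W22-stability in cones}, \ref{thm:Serrin-IMPROVED-W22-stability in cones} (which invoke only the gradient Poincar\'e inequalities and \cite[Lemma~6.4]{Pog3}) give part (ii); and inspecting the constants yields precisely the stated dependences — $N,\ca,\te,d_{\Si\cap\Om}$ (via Remark~\ref{rem:stime mu HS}), $\lbunu$ or $\ul{r}_i$, and $\|\na u\|_{L^{\infty}(\Si\cap\Om)}$ — with no occurrence of $\eta_{p,\al}(\Ga_1,\Si\cap\Om)$.
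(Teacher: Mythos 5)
Your central observation is correct and is exactly the mechanism that makes $\mu_{p,\al}(\Si\cap\Om)^{-1}$ replace $\La_{p,\al}(k)$: with $z$ as in \eqref{2nd choice z}, \emph{all} first derivatives $h_i$ have zero mean over $\Si\cap\Om$, so Lemma~\ref{lem:BoasStraube} (resp.\ Lemma~\ref{lem:Hurri}) can be applied componentwise, and the vector-field Poincar\'e inequalities of Theorems~\ref{thm:Poincare new in general}, \ref{thm:Strengthened Poincare new RN} are never invoked. This removes $\eta_{p,\al}$ from Lemmas~\ref{lem:Mixed pp Poincareaigradienti}, \ref{lem:Mixed Strengthened Poincareaigradienti}, and from the $L^\infty$ bound on $\na h$ used in the two Corollaries, so the constants in Theorems~\ref{thm:Serrin-W22-stability in cones}, \ref{thm:Serrin-IMPROVED-W22-stability in cones} and their Corollaries have the advertised dependence.

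The gap is exactly where you flag it, and in my view it is more serious than your proposal acknowledges. The point $z$ of \eqref{2nd choice z} in general fails \eqref{eq:INTRO_inner product z in cone}, so neither the fundamental identity \eqref{idwps} nor the trace inequalities from \cite{Pog3} (the references \cite[(5.12),(5.13)]{Pog3} in the proof of Lemma~\ref{lem:weighted trace inequality ad hoc}, and hence \eqref{eq:Serrin Prestab nah} and \eqref{eq:stima prefinal Serrin rhoerhoi}) apply as stated. Re-running the Pohozaev computation without \eqref{eq:INTRO_inner product z in cone} shows that \eqref{idwps} picks up the extra term $-\tfrac12\langle z,V\rangle$ with $V:=\int_{\Ga_0}(u_\nu^2-R^2)\nu\,dS_x=\int_{\Ga_1}\bigl(|\na u|^2+2Nu+R^2\bigr)\nu\,dS_x\in\mathrm{span}\{\nu:\Ga_1\}$; since the tangential projection $\hat z$ of $z$ lies precisely in $\mathrm{span}\{\nu:\Ga_1\}$, the pairing $\langle\hat z,V\rangle$ does not vanish. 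Your plan is to split this off and reabsorb it by Young, but the obvious Cauchy--Schwarz bound $|\langle\hat z,V\rangle|\le|\hat z|\,|\Ga_0|^{1/2}\|u_\nu^2-R^2\|_{L^2(\Ga_0)}$ is only \emph{linear} in the deficit: if $|\hat z|$ is merely bounded by a geometric constant, feeding this into $\|\na h\|_{L^2(\Ga_0)}^2\le A\|u_\nu^2-R^2\|_{L^2(\Ga_0)}\|\na h\|_{L^2(\Ga_0)}+B|\langle\hat z,V\rangle|$ yields a H\"older-$1/2$ rather than Lipschitz profile, which is strictly weaker than the claim in item (i) (and incompatible with the very explicit constant $C\le\tfrac{1}{2\lbunu}(2N\mu_{2,1}^{-2}+3)$). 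The natural way to upgrade $|\hat z|$ to order $\|\na h\|$ — via Jensen, $|\hat z|\le|\Si\cap\Om|^{-1/2}\|(h_1,\dots,h_k,0,\dots,0)\|_{L^2(\Si\cap\Om)}$, followed by a Poincar\'e inequality — reintroduces $\eta_{2,1}(\Ga_1,\Si\cap\Om)$, i.e.\ precisely the dependence the theorem is designed to eliminate. There is a parallel uncontrolled error in the $\Ga_1$ boundary term of the trace identity, since on $\Ga_1$ one has $\langle\na^2 h\na h,\nu\rangle=\langle\na^2 u\na u,\nu\rangle-\langle z,\nu\rangle(1-u_{\nu\nu})-II(z^T,\na u)$ rather than the simple $\langle\na^2 u\na u,\nu\rangle$, and the sign/size of the new pieces is not dispatched by convexity alone. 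As it stands, your argument identifies the obstacle correctly but only sketches a route past it; the reabsorption step would need to be carried out concretely, with a mechanism (cancellation between the two error contributions, or a genuine smallness of $\hat z$) that your outline does not yet supply.
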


	\subsection{The classical case \texorpdfstring{$\Si=\RR^N$}{Sigma=RN}}
	\label{subsec:classical case Si=RN}

	The following theorem analyzes Theorems \ref{thm:INTRO_Serrinstab Lipschitz} and \ref{thm:Improved Serrin stability rhoe rhoi} in the particular case where $\Si=\RR^N$, returning variants of the results established in \cite{Feldman, MP3, MP6}.

	Notice that, when $\Si=\RR^N$, we have that $\Si\cap\Om=\Om$ is a smooth, say $C^2$, bounded domain in $\RR^N$.
	Such a domain always satisfies the classical uniform interior and exterior sphere conditions in $\RR^N$.
	Moreover, as already mentioned, when $\Si=\RR^N$ the uniform interior and exterior sphere conditions relative to $\Si$ reduce to the classical uniform interior and exterior sphere conditions in $\RR^N$.
	
	When $\Si=\RR^N$ the choice of $z$ in \eqref{eq:INTRO_choice of z 1of2} and \eqref{eq:INTRO_choice of z 2of2} agrees with that in \eqref{2nd choice z}, and reduces to the center of mass of $\Om$,
	being as
	$$
	z= \frac{1}{|\Om|} \int_{\Om} (x -\na u) \, dx  = \frac{1}{|\Om|} \left[ \int_{\Om} x  \, dx - \int_{\Ga_0} u \nu \, dx \right] =\frac{1}{|\Om|}  \int_{\Om} x  \, dx .
	$$
	We point out that, in the particular case $\Si=\RR^N$, many other choices for the point $z$ are admissible: we refer the interested reader to \cite{Feldman, MP, MP2, MP3}.
	
We are now in position to prove the following
	
		\begin{thm}[Sharp stability for the classical Serrin's problem in $\RR^N$]
		\label{thm:classical INTRO in R^N}
		Let $\Om \subset \RR^N$, $N\ge2$, be a bounded domain of class $C^2$. 
		Then, we have that:
		
		(i) \eqref{eq:INTRO_Serrin} holds true for an explicit constant $C$ only depending on $N$, $\ul{r}_i$, and $d_\Om$.
		
		(ii) \eqref{eq:stability Serrin rhoei Improved} holds true for an explicit constant $C$ only depending on $N$, $\ul{r}_i$, $\ul{r}_e$, and $d_\Om$. If $\Ga_0=\pa\Om$ is mean convex, then the dependence on $\ul{r}_e$ can be dropped.
	\end{thm}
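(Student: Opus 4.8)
The plan is to obtain (i) and (ii) by specializing, respectively, Theorem~\ref{thm:INTRO_Serrinstab Lipschitz} and Theorem~\ref{thm:Improved Serrin stability rhoe rhoi} to the case $\Si=\RR^N$, and then to check that every geometric constant produced by those theorems can be estimated in terms of $N$, $\ul{r}_i$, $\ul{r}_e$ and $d_\Om$ alone. The preliminary observations are: when $\Si=\RR^N$ one has $\pa\Si=\varnothing$, hence $\cS=\varnothing$, $\Ga_0=\pa\Om$ and $\Ga_1=\varnothing$, so that $k=0$ in \eqref{def_k}; consequently $\La_{2,1}(k)=\mu_{2,1}(\Om)^{-1}$ in \eqref{def:La p al k} and, whenever Lemma~\ref{lem:Mixed Strengthened Poincareaigradienti} is used, $\La_{r,2,1/2}(0)=\mu_{r,2,1/2}(\Om)^{-1}$ in \eqref{eq:NEWCONSTANT La_rpal (k)}; in particular the constants $\eta_{p,\al}(\Ga_1,\Si\cap\Om)$ and $\eta_{r,p,\al}(\Ga_1,\Si\cap\Om)$ never enter. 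Moreover the point $z$ of \eqref{eq:INTRO_choice of z 1of2}--\eqref{eq:INTRO_choice of z 2of2} agrees with \eqref{2nd choice z} and is the barycenter of $\Om$, as already noted above the statement. Finally, a bounded $C^2$ domain satisfies a uniform interior sphere condition of some radius $\ul{r}_i$ and a uniform exterior sphere condition of some radius $\ul{r}_e$; since $\Si=\RR^N$ these coincide with the conditions of Definition~\ref{def:interior sphere relative to cone} and its exterior counterpart, so \eqref{def:lower bound unu} holds with $\lbunu=\ul{r}_i$ by Remark~\ref{remark_palla int}, and a uniform interior sphere condition of radius $\ul{r}_i$ forces a $(\te,\ca)$-uniform interior cone condition with $\te$ universal and $\ca$ comparable to $\ul{r}_i$.

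For (i), I would apply Theorem~\ref{thm:INTRO_Serrinstab Lipschitz} with $\lbunu=\ul{r}_i$, obtaining \eqref{eq:INTRO_Serrin} with $C\le \frac{1}{2\,\ul{r}_i}\left(2N\,\mu_{2,1}(\Om)^{-2}+3\right)$. It then remains to bound $\mu_{2,1}(\Om)^{-1}$: by the previous paragraph $\Om$ satisfies a $(\te,\ca)$-uniform interior cone condition with $\te,\ca$ controlled by $N,\ul{r}_i,d_\Om$; by item~(iii) of Remark~\ref{rem:stime mu HS} it is then a $b_0$-John domain with $b_0$ controlled by $\te,\ca,d_\Om$, hence by $N,\ul{r}_i,d_\Om$; and by item~(i) of Remark~\ref{rem:stime mu HS} together with $|\Om|\le\om_N d_\Om^N$ one gets $\mu_{2,1}(\Om)^{-1}\le C(N,\ul{r}_i,d_\Om)$. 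This proves (i).

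For (ii), I would first verify that all hypotheses of Theorem~\ref{thm:Improved Serrin stability rhoe rhoi} hold: $\RR^N$ is a convex cone; $\Om$ satisfies the $(\te,\ca)$-uniform interior cone condition and the $\ul{r}_i$-uniform interior sphere condition by the observations above; and the additional assumption \eqref{eq:condition relation dist improved} is trivially satisfied, since $\pa\Ga_0=\pa(\pa\Om)=\varnothing$ (cf.\ Remark~\ref{rem:NEW on ADDITIONAL ASSUMPTION}). Hence \eqref{eq:stability Serrin rhoei Improved} holds, and I would then trace the dependence of its constant: $d_{\Si\cap\Om}=d_\Om$; the cone parameters $\te,\ca$ and the Poincar\'e constants $\mu_{r,2,1/2}(\Om)^{-1}$ entering through Lemma~\ref{lem:Mixed Strengthened Poincareaigradienti} are estimated in terms of $N,\ul{r}_i,d_\Om$ exactly as in the proof of (i) (with the $r$'s prescribed there, via items (i) and (iii) of Remark~\ref{rem:stime mu HS}); the factors $\eta_{2,1/2}(\Ga_1,\Si\cap\Om)$ are absent because $k=0$; and $\lbunu=\ul{r}_i$. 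The only remaining ingredient is $\nr\na u\nr_{L^\infty(\Om)}$, which is needed only for $N\ge3$: by Remark~\ref{rem:stima norma na u infinito SicapOm con N r_e diam} it is bounded in terms of $N,d_\Om,\ul{r}_e$, giving the stated dependence. If in addition $\Ga_0=\pa\Om$ is mean convex, one may instead bound $\nr\na u\nr_{L^\infty(\Om)}$ in terms of $N$ and $d_\Om$ only, by the known $L^\infty$-gradient bound for the torsion function on mean-convex domains (cf.\ \cite{MP3,MP6} and the references therein; the key tool is the subharmonicity of $P:=|\na u|^2-2u$, which follows from $\De u=N$ and $|\na^2 u|^2\ge(\De u)^2/N$ and which equals $u_\nu^2$ on $\pa\Om$, so that $\nr\na u\nr_{L^\infty(\Om)}^2\le\max_{\pa\Om}u_\nu^2$, the latter then being controlled by mean convexity); this removes the dependence on $\ul{r}_e$.

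The core of the argument is thus a reduction to the already-established general theorems, and the main point to be careful with is bookkeeping rather than a new idea: one must make sure that every constant produced — the Poincar\'e/John constants $\mu_{2,1}(\Om)^{-1}$ and $\mu_{r,2,1/2}(\Om)^{-1}$, the interior cone parameters $\te,\ca$, and the John constant $b_0$ — is genuinely controlled by $N$, $\ul{r}_i$ and $d_\Om$, which rests on the standard facts that a uniform interior sphere condition implies a uniform interior cone condition with comparable parameters and that the latter implies the John property with a controlled constant (Remark~\ref{rem:stime mu HS}); and, for the sharper mean-convex statement in (ii), on the $L^\infty$-gradient bound for the torsion function on mean-convex domains.
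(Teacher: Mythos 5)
Your proposal is correct and follows essentially the same route as the paper: specialize Theorems~\ref{thm:INTRO_Serrinstab Lipschitz} and \ref{thm:Improved Serrin stability rhoe rhoi} to $\Si=\RR^N$, observe that $\Ga_1=\varnothing$ forces $k=0$ so that $\La_{p,\al}(0)=\mu_{p,\al}(\Om)^{-1}$ and the $\eta$-constants drop out, note that \eqref{eq:condition relation dist improved} holds trivially since $\pa\Ga_0=\varnothing$, and then bound the remaining Poincar\'e constants and $\nr\na u\nr_{L^\infty(\Om)}$ in terms of $N$, $\ul{r}_i$, $\ul{r}_e$, $d_\Om$. The only cosmetic differences are that the paper cites \cite[(iii) of Remark 2.4]{MP3} directly for the $C^2$ bound on $\mu_{p,\al}(\Om)^{-1}$ and \cite[Theorem 3.10]{MP}, \cite[Lemma 2.2]{MP5} with \cite[(ii) of Lemma 4.9]{Pog3} for the gradient bounds, while you reach the same conclusions via Remarks~\ref{rem:stime mu HS} and \ref{rem:stima norma na u infinito SicapOm con N r_e diam} and a slightly loose sketch of the $P$-function argument for the mean-convex case (your displayed inequality $\nr\na u\nr_{L^\infty}^2\le\max_{\pa\Om}u_\nu^2$ is by itself vacuous; the substantive point is the bound on $\max_{\pa\Om}u_\nu$ via mean convexity, which you correctly defer to the cited references).
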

	\begin{proof}[Proof of Theorem \ref{thm:classical INTRO in R^N}]
		As already noticed, (i) and (ii) immediately follow from Theorem \ref{thm:INTRO_Serrinstab Lipschitz} and Theorem \ref{thm:Improved Serrin stability rhoe rhoi} (recalling Remark \ref{rem:NEW on ADDITIONAL ASSUMPTION}). 
		
		Being as $\Si=\RR^N$ and hence $\Ga_1=\varnothing$, we have that $k=0$ and hence $\La_{p,\al}(0)= \mu_{p,\al}(\Si\cap\Om)^{-1}$ and $\La_{r,p,\al}(0)= \mu_{r,p,\al}(\Si\cap\Om)^{-1}$. In turn, being as $\Om$ a $C^2$ domain, $\mu_{p,\al}(\Si\cap\Om)^{-1}$ and $\mu_{r,p,\al}(\Si\cap\Om)^{-1}$ can be explicitly estimated in terms of $\ul{r}_i$ and $d_{\Si\cap\Om}$ only (see \cite[(iii) of Remark 2.4]{MP3}).

	Being as $\Si=\RR^N$, $\nr \na u \nr_{L^\infty(\Si\cap\Om)} = \nr \na u \nr_{L^\infty(\Om)}$ can be estimated by using \cite[Theorem 3.10]{MP}, which informs us that
	$$
	\nr \na u \nr_{L^\infty(\Om)} \le \frac{\max \left\lbrace 3, N \right\rbrace}{2} \frac{d_{\Om} (d_{\Om} + \ul{r}_e)}{\ul{r}_e} .
	$$ In the particular case where $\Ga_0=\pa\Om$ is mean convex, a better estimate is available, that is, $\nr \na u \nr_{L^{\infty}(\Om)}$ can be estimated in terms of $N$ and $\max_{\ol{\Om}}(-u)$ only (see, e.g, \cite[Lemma 2.2]{MP5}). In turn, $\max_{\ol{\Om}}(-u)$ can be easily estimated (e.g., applying \cite[(ii) of Lemma 4.9]{Pog3} in the special case $\Ga_1 = \varnothing$) by means of
	$$
	\max_{\ol{\Om}}(-u) \le \frac{d_{\Om}^2}{2} .
	$$
	We mention that a finer bound for $\max_{\ol{\Om}}(-u)$ in terms of
	the volume $|\Om|$ holds true thanks to a classical result on radially decreasing rearrangements due to Talenti (\cite{Talenti}).  
	\end{proof}

	\begin{rem}\label{rem:final remark on regularity in the classical setting}
	{\rm
	We recall that  the uniform interior and exterior touching ball condition is equivalent to the $C^{1,1}$ regularity of $\pa\Om$ (see, for instance, \cite[Corollary 3.14]{ABMMZ}). Nevertheless, (ii) of Theorem \ref{thm:classical INTRO in R^N} remains true by replacing $\ul{r}_i$, $\ul{r}_e$ (and hence relaxing the $C^{1,1}$ uniform regularity) with the $C^{1,\ga}$ regularity, for $0<\ga<1$: we refer to \cite{CPY} for details.
	For $ \ga \ge 1$ instead, the stability exponent in \eqref{eq:stability Serrin rhoei Improved} can be improved for $N\ge 4$, as proved in \cite[Theorem 4.4]{MP6}.
	}
	\end{rem}

\section*{Acknowledgements}
The authors are members of the Gruppo Nazionale Analisi Matematica Probabilit\`a e Applicazioni (GNAMPA) of the Istituto Nazionale di Alta Matematica (INdAM).
Giorgio Poggesi is supported by the Australian Research Council (ARC) Discovery Early Career Researcher Award (DECRA) DE230100954 ``Partial Differential Equations: geometric aspects and applications''
and is member of the Australian Mathematical Society (AustMS).


\begin{thebibliography}{99}
	
\bibitem{ABR} A. Aftalion, J. Busca, W. Reichel.  \emph{Approximate radial symmetry for overdetermined boundary value problems. } Adv. Differ. Equ. 4,  907--932 (1999).
	
	
	\bibitem{Adams} R. A. Adams, Sobolev spaces, Academic Press, New York, 1975.

	
	\bibitem{ABMMZ} R. Alvarado, D. Brigham, V. Maz’ya, M. Mitrea, E. Ziad\'e, {\it On the regularity of domains satisfying a uniform hour-glass condition and a sharp version of the Hopf-Oleinik boundary point principle}, Problems in mathematical analysis. No. 57. J. Math. Sci. (N.Y.), 176 (2011), no. 3, 281--360.
	
	\bibitem{BF} E. Baer, A. Figalli, \emph{Characterization of isoperimetric sets inside almost-convex cones}, Discrete Contin. Dyn. Syst. 37 (2017), no. 1, 1–14.
	
	\bibitem{BS} H. P. Boas, E. J. Straube, \emph{Integral inequalities of Hardy and Poincaré type}, Proceedings of the American Mathematical Society 103.1 (1988):172-176.
	
	
	
	
	\bibitem{BNSTARMA} B. Brandolini, C. Nitsch, P. Salani, C. Trombetti, \emph{Serrin-type overdetermined problems: an alternative proof}, Arch. Ration. Mech. Anal. 190 (2008), no.2, 267--280.
	
	\bibitem{BNST}  B. Brandolini, C. Nitsch, P. Salani, C. Trombetti, \emph{On the stability of the Serrin problem}, J. Differential Equations, 245 (2008), 1566–1583.
	
\bibitem{CROS}  X. Cabr\'e, X. Ros–Oton, J. Serra. \emph{Sharp isoperimetric inequalities via the ABP method. } J. Eur. Math. Soc. (JEMS),
(12) 18 (2016), 2971--2998.
	
	\bibitem{CPY} L. Cavallina, G. Poggesi, T. Yachimura, \emph{Quantitative stability estimates for a two-phase serrin-type overdetermined problem}, Nonlinear Anal. 222 (2022), Paper No. 112919, 17 pp.
	
	
	
	\bibitem{CGPRS}  E. Cinti, F. Glaudo, A. Pratelli, X. Ros-Oton, J. Serra, \emph{Sharp quantitative stability for isoperimetric inequalities with homogeneous weights}, Trans. Amer. Math. Soc. 375 (2022), no. 3, 1509–1550.
	
	



\bibitem{CMV} G. Ciraolo, R. Magnanini, V. Vespri. \emph{Holder stability for Serrin's overdetermined problem.} Ann. Mat. Pura Appl. 195, 1333--1345 (2016).


\bibitem{CFR} G. Ciraolo, A. Figalli, A. Roncoroni. \emph{Symmetry results for critical anisotropic p-Laplacian equations in convex cones.} Geom. Funct. Anal. 30 (2020), 770--803.
	
\bibitem{CPP} G. Ciraolo, F. Pacella, C. Polvara. \emph{Symmetry breaking and instability for semilinear elliptic equations in spherical sectors and cones.} Preprint (2023) arXiv:2305.10176.
	
\bibitem{CR} G. Ciraolo, A. Roncoroni.  \emph{Serrin's type overdetermined problems in convex cones. } Calc. Var. 59, 28 (2020). 	

\bibitem{CR_survey}  G. Ciraolo, A. Roncoroni. \emph{The method of moving planes: a quantitative approach. } Bruno Pini Mathematical Analysis
Seminar. 9 (2018), 41--77.
	
	
	
	\bibitem{DPV2} S. Dipierro, G. Poggesi, E. Valdinoci, \emph{Radial symmetry of solutions to anisotropic and weighted diffusion equations with discontinuous nonlinearities}, Calc. Var. Partial Differential Equations 61 (2022), no. 2, Paper No. 72, 31 pp. 
	
	\bibitem{Feldman} W. M. Feldman, \emph{Stability of Serrin’s problem and dynamic stability of a model for contact angle motion}, SIAM J. Math. Anal. 50-3 (2018), 3303–3326.
	
	\bibitem{FI}  A. Figalli, E. Indrei, \emph{A sharp stability result for the relative isoperimetric inequality inside convex cones}, J. Geom. Anal. 23 (2013), no. 2, 938–969.
	
	
	
	\bibitem{GO} A. Gilsbach, M. Onodera, \emph{Linear stability estimates for Serrin's problem via a modified implicit function theorem}, Calc. Var. Partial Differential Equations 60 (2021), no.6, Paper No. 241, 19 pp.
	
	
	
	
	\bibitem{HS1} R.~Hurri, \emph{Poincar\'e domains in $\RR^n$}, Ann. Acad. Sci. Fenn. Ser. A Math. Dissertationes 71 (1988), 1--41.
	
	\bibitem{HS} R.~Hurri-Syrj\"anen, \emph{An improved Poincar\'e inequality}, Proc. Amer. Math. Soc. 120 (1994), 213--222.
	
	\bibitem{IPT}  A. Iacopetti, F. Pacella, T. Weth, \emph{Existence of nonradial domains for overdetermined and isoperimetric problems in nonconvex cones}, Arch. Ration. Mech. Anal. 245 (2022), no. 2, 1005--1058.
	
	
	
	\bibitem{LS} J. Lamboley, P. Sicbaldi. \emph{New examples of extremal domains for the first eigenvalue of the Laplace-Beltrami
operator in a Riemannian manifold with boundary. } Int. Math. Res. Not. IMRN 2015, no. 18, 8752--8798.
	
	
	Nonlinear Anal. 12 (1988), 1203--1219.
	
	\bibitem{LP} P. L. Lions, F. Pacella, \emph{Isoperimetric inequalities for convex cones}, Proc. Amer. Math. Soc. 109 (1990) 477-485
	
\bibitem{LPT} P.L. Lions, F. Pacella, M. Tricarico. \emph{Best constants in Sobolev inequalities for functions vanishing on some part of the
boundary and related questions. } Indiana Univ. Math. J. (2) 37 (1988),  301--324.	
	
	
	\bibitem{MP} R.~Magnanini and G.~Poggesi, \emph{On the stability for Alexandrov's Soap Bubble theorem},  J. Anal. Math. 139 (2019), no. 1, 179–205.
	
	\bibitem{MP2} R.~Magnanini, G.~Poggesi, \emph{Serrin's problem and Alexandrov's Soap Bubble Theorem: stability via integral identities},  Indiana Univ. Math. J. 69 (2020), no. 4, 1181–1205. 
	
	\bibitem{MP3} R.~Magnanini, G.~Poggesi, \emph{Nearly optimal stability for Serrin's problem and the Soap Bubble theorem},  Calc. Var. Partial Differential Equations 59 (2020), no. 1, Paper No. 35, 23 pp.

	
	\bibitem{MP5} R. Magnanini, G. Poggesi, \emph{The location of hot spots and other extremal points},  Math. Ann. 384 (2022), no. 1-2, 511--549.
	
	\bibitem{MP6} R.~Magnanini, G.~Poggesi, \emph{Interpolating estimates with applications to some quantitative symmetry results}, Math. Eng. 5 (2023), no. 1, Paper No. 002, 21 pp.
	
	\bibitem{MP7} R.~Magnanini, G.~Poggesi, \emph{Quantitative symmetry in a mixed Serrin-type problem for a constrained torsional rigidity}, preprint (2022) arXiv:2210.10288.
	
	
	\bibitem{MarS} O.~Martio, J.~Sarvas, \emph{Injectivity theorems in plane and space}, Ann. Acad. Sci. Fenn. Ser. A I Math. 4 (1979), 383--401.
	
	
	
	
	\bibitem{O} M. Onodera, \emph{Linear stability analysis of overdetermined problems with non-constant data}, Math. Eng. 5 (2023), no.3, Paper No. 048, 18 pp.
	
	\bibitem{PT} F. Pacella, G. Tralli \emph{Overdetermined problems and constant mean curvature surfaces in cones}, Rev. Mat. Iberoam. 36 (2020), no. 3, 841–867.
	
	\bibitem{PT isoperimetric} F. Pacella, G. Tralli, \emph{Isoperimetric cones and minimal solutions of partial overdetermined problems}, Publ. Mat. 65 (2021), no. 1, 61–81.
	
	\bibitem{PS} L.E. Payne, P.W. Schaefer, \emph{Duality theorems in some overdetermined boundary value problems}, Math. Methods Appl. Sci. 11 (1989), no.6, 805--819.
	
	
	
	\bibitem{Pog3} G. Poggesi, \emph{Soap bubbles and convex cones: optimal quantitative rigidity}, preprint (2022) arXiv:2211.09429.
	
	\bibitem{Pog4} G. Poggesi, \emph{Remarks about the mean value property and some weighted Poincar\'e-type inequalities}, preprint (2023) arXiv:2308.07000.
	
\bibitem{RR} M. Ritor\'e, C. Rosales, \emph{Existence and characterization of regions minimizing perimeter under a volume constraint inside Euclidean cones}, Trans. Amer. Math.Soc. 356 (2004), no. 11, 4601-4622.
	
	
	
	\bibitem{Talenti} G. Talenti. Elliptic equations and rearrangements. Ann. Scuola Norm. Sup. Pisa Cl. Sci. (4), 3(4):697–718, 1976.
	
	\bibitem{Va} J.~V\"ais\"al\"a, \emph{Exhaustions of John domains}, Ann. Acad. Sci. Fenn. Ser. A I Math. 19 (1994), 47--57.
	
	\bibitem{We} H.~F.~Weinberger, \emph{Remark on the preceding paper of Serrin}, Arch. Ration. Mech. Anal. 43 (1971), 319--320.
	
\end{thebibliography}
\end{document}